\newcommand{\cA}{{\cal A}}
\newcommand{\cG}{{\cal G}}
\newcommand{\cM}{{\cal M}}
\newcommand{\cN}{{\cal N}}
\newcommand{\cS}{{\cal S}}
\newcommand{\cU}{{\cal U}}
\newcommand{\ms}{\medskip}
\newcommand{\tII}{{{I}}}
\newtheorem{theorem}{Theorem}
\newtheorem{lemma}[theorem]{Lemma}
\newtheorem{assumption}{Assumption}
\newtheorem{definition}{Definition}
\renewcommand{\Re}{{\mathbb R}}
\patchcmd{\subequations}{\alph{equation}}{\alphalph{\value{equation}}}{}{}
\renewcommand{\underline}{\underaccent{\bar}}
\renewcommand{\overline}{\bar}
\renewcommand{\epsilon}{\varepsilon}
\begin{document}
%0.27
\baselineskip0.25in

\begin{center}
\begin{large}
\begin{bf}

Robust Optimization for Electricity Generation \ms

\today \ms

\end{bf}
Chaithanya Bandi, Krishnamurthy Dvijotham, David Morton, Haoxiang Yang \ms
\end{large}
\end{center}

\begin{abstract}
	We consider a robust optimization problem in an electric power system under uncertain demand and availability of renewable energy resources. Solving the deterministic alternating current optimal power flow (ACOPF) problem has been considered challenging since the 1960s due to its nonconvexity. Linear approximation of the AC power flow system sees pervasive use, but does not guarantee a physically feasible system configuration. In recent years, various convex relaxation schemes for the ACOPF problem have been investigated, and under some assumptions, a physically feasible solution can be recovered. Based on these convex relaxations, we construct a robust convex optimization problem with recourse to solve for optimal controllable injections (fossil fuel, nuclear, etc.) in electric power systems under uncertainty (renewable energy generation, demand fluctuation, etc.). We propose a cutting-plane method to solve this robust optimization problem, and we establish convergence and other desirable properties. Experimental results indicate that our robust convex relaxation of the ACOPF problem can provide a tight lower bound.
\end{abstract}

\section{Introduction}
\label{sec:intro}
The alternating current optimal power flow (ACOPF) problem has been a topic of interest in the academic literature since the 1960s \citep{carpentier1962}. The ACOPF problem is used to determine the output for all generators and establish the system's \emph{configuration}, i.e., the voltage and phase angle at each bus and resulting power flows on lines. The goal is usually to minimize the generation cost and keep the system configuration within a stable range; see, for example, \citet{bienstock2015electrical} for a detailed discussion. While the ACOPF problem can be formulated as a quadratically constrained quadratic program, realistic instances are challenging to solve within time limits commensurate with an operational schedule---usually a few minutes---because of their scale and nonconvexities \citep[see, e.g., discussions on such challenges in][]{lavaei2012zero, low2014convex, verma2010power}. Linearizing the power flow equations simplifies the nonconvex ACOPF problem to what the literature calls a DCOPF approximation, which is a linear program, and this approximation is frequently applied. However, optimality and feasibility of the solution to the original ACOPF problem cannot be guaranteed because the voltage at each bus is assumed to be fixed and reactive power is ignored; see, e.g., \citet{momoh1999review} and \citet{stott2009dc} for reviews of such DCOPF approximations. In recent years, there has been an increasing focus on the ACOPF problem, and employing convex relaxations rooted in semidefinite programming and second-order cone programming as an approximation of this nonconvex problem~\citep{bai2011semidefinite, bai2008semidefinite, coffrin2016qc, jabr2006radial, kocuk2016strong, lavaei2012zero, low2014convexII}, and under some circumstances, these relaxed solutions recover the exact optimal solution of the original nonconvex ACOPF problem.

Electric power systems operate under significant uncertainty due to system load, failure of generation and transmission assets, and uncertain generation from renewable energy sources (RESs) including wind and solar resources. In this context, we seek an economic dispatch decision that is robust to uncertainty in load and RES generation. With stochastic realizations of power from wind farms, \citet{phan2014two} model economic dispatch under ACOPF as a two-stage stochastic program, and use a sample average approximation. \citet{monticelli1987security} introduce a security-constrained variant of an economic dispatch model in which the goal is to obtain a solution that can adapt to failure of a subset of system components explicitly modeled through a set of contingencies. Instead of enforcing feasibility for all modeled contingencies, \citet{lubin2016robust} formulate a chance-constrained model that ensures feasibility with high probability. Robust optimization is a natural modeling framework for security-constrained problems in that such models yield solutions that can handle any contingency within a specified uncertainty set. \citet{jabr2013adjustable} and \citet{louca2017robust} propose an adaptive robust optimization model, in which recourse decisions are represented as an affine function of realizations of uncertainty such as available power from RESs. \citet{conejo2018adaptive} propose a tri-level decomposition algorithm, where in the second level a DCOPF relaxation is solved to obtain worst-case scenarios, which are further used to construct a large-scale extensive formulation of the robust ACOPF problem.

Although significant progress has been made both in convex relaxations of nonconvex ACOPF problems and in modeling dispatch under uncertainty, there is much less work that combines these two threads; i.e., most stochastic or robust models for economic dispatch use the linear DCOPF approximation. \citet{ferris2015security} solve a scenario-based security-constrained ACOPF problem, in which for each contingency the ACOPF is relaxed as a semidefinite program (SDP). \citet{lorca2018adaptive} model a multi-period two-stage robust ACOPF problem using a conic relaxation, which is similar to our approach, but we focus more on the feasibility guarantee and the properties of our robust solution. 

We solve a robust convex approximation, without specifying scenarios but by constructing an uncertainty set, to simultaneously reap the benefit of a tighter relaxation and include uncertainty in our model. We assume an uncontrollable injection represents net load at each bus. Here, net load captures demand and RES generation, which are subject to simple bounds and further constraints that define the uncertainty set. Our broad goal is to find a robust and economical energy generation plan. Here, robustness means that for all contingencies modeled by our uncertainty set, we can find a configuration that satisfies the system's physical and operational constraints. We call such a plan a robust optimal solution to the ACOPF problem. 

Our formulation is unique in that, in addition to using a convex relaxation of the ACOPF problem rather than a DC approximation, we employ a ``full recourse" solution rather than relying on simpler approximations like linear decision rules. There are three possible outcomes from solving our model. First, the solution to the convex approximation may be feasible to the robust nonconvex ACOPF problem, which means we exactly recover a robust solution. Second, due to the convex relaxation, the solution we obtain may not be feasible to the robust nonconvex ACOPF problem, but we obtain a lower bound on the optimal cost of the nonconvex counterpart, which yields a bound on the optimality gap when coupled with a heuristically obtained feasible solution. Third, if the convex relaxation is infeasible, we identify infeasibility of the robust nonconvex ACOPF problem. Our specific goals are to understand: (i)~whether our convex relaxation of the robust ACOPF problem yields a high quality lower bound on the nonconvex model's optimal value; (ii)~whether the solution we obtain from the convex relaxation is feasible, or nearly feasible, to the robust ACOPF model; and, (iii)~how solutions to our robust convex relaxation compare to alternatives from simpler deterministic models. Addressing these goals requires developing an algorithm to handle problems of reasonable scale.

In Section~\ref{sec:formulation}, we formulate our convex relaxation of the ACOPF problem. A cutting-plane method is proposed in Section~\ref{sec:cutting}, and the proof of its convergence is detailed. Experimental results are reported in Section~\ref{sec:exp}, and conclusions are drawn in Section~\ref{sec:conclusion}.

\section{Problem Formulation}
\label{sec:formulation}
In this section we formulate the robust nonconvex ACOPF problem and its convex relaxation. We index the set of buses in the power system by \(\cN\), and the set of lines by \(\cA\). The set of controllable generators is denoted by \(\cG\), and the subset of generators connected to bus \(i\) is indexed by \(\cG_i\). Each controllable generator \(g \in \cG\) injects active power \(s_g^p\) and reactive power \(s_g^q\) at the single bus \(i\) satisfying \(g \in \cG_i\). Each bus \(i \in \cN\) has an uncontrollable injection, which may be negative, consisting of the uncertain net load due to actual demand and RES generation at that bus. At bus \(i \in \cN\), the uncontrollable active power injected, \(u_i^p\), is bounded within an uncertainty set \([\underline{u}_i^p,  \overline{u}_i^p]\), where \( \underline{u}_i^p \leq \overline{u}_i^p,\ \forall i \in \cN \). The uncontrollable reactive power is bounded in a similar way, where \(u_i^q \in [\underline{u}_i^q, \overline{u}_i^q],\ \forall i \in \cN\). 

In addition to simple bounds, we introduce a ``budget constraint'' in our uncertainty set, which limits the magnitude of deviation from a nominal injection, summed across all buses. Such budget-constrained uncertainty sets have been widely applied in robust optimization, starting with \citet{bertsimas2003robust,bertsimas2004price}. Here we denote the nominal uncontrollable active and reactive power injection as \(u^{p,0}\) and \(u^{q,0}\), which are both vectors with \(|\cN|\) components and satisfy \((\underline{u}^p_i,\underline{u}^q_i) \leq (u^{p,0},u^{q,0}) \leq (\overline{u}^p_i,\overline{u}^q_i)\), for \(i \in \cN\). Appendix~\ref{appen:facLoc} details how we cluster the set of buses \(\cN\) into \(|\cM|\) subgroups, denoted by \(\cN_m,\ m \in \cM\), using a facility location problem. There has been significant work regarding the geographical correlation of renewable generation and load in power systems~\citep[e.g.,][]{bernstein2014power,fang2018modelling,klima2015geographic,lohmann2016local,malvaldi2017spatial,xie2018distributionally}, which is broadly consistent with our clustering scheme. We assume within each cluster the relative magnitude of deviation is the same for both the active power and reactive power at every bus. We define the uncertainty set with the following constraints:
	\begin{subequations}
	\begin{align}
	& 0 \leq u_i^{p,+} \leq \overline{u}_i^p - u_i^{p,0} \qquad 0 \leq u_i^{p,-} \leq u_i^{p,0} - \underline{u}_i^p \qquad \qquad  \qquad \qquad \forall i \in \cN \label{eqn:upBounds}\\
	& 0 \leq u_i^{q,+} \leq \overline{u}_i^q - u_i^{q,0} \qquad 0 \leq u_i^{q,-} \leq u_i^{q,0} - \underline{u}_i^q \qquad \qquad  \qquad \qquad  \forall i \in \cN \label{eqn:uqBounds}\\
	& \frac{u_i^{p,+}}{\overline{u}_i^p - u_i^{p,0}} = \frac{u_i^{q,+}}{\overline{u}_i^q - u_i^{q,0}} = u_m^{+} \qquad \frac{u_i^{p,-}}{u_i^{p,0} - \underline{u}_i^p} = \frac{u_i^{q,-}}{u_i^{q,0} - \underline{u}_i^q} = u_m^{-} \qquad \forall m \in \cM, i \in \cN_m \label{eqn:groupBounds}
	\end{align}
\end{subequations}
\begin{align}\label{eqn:setU}
\cU = \left\{(u^{p,+},u^{p,-},u^{q,+},u^{q,-}) \in \Re^{4 |\cN|} \  \left| \  
\begin{aligned}
& \text{\eqref{eqn:upBounds}-\eqref{eqn:uqBounds} and $\exists \, u_m^{+}, u_m^{-}, m \in \cM$, }\\
&\text{satisfying~\eqref{eqn:groupBounds} and} \ \sum_{m \in \cM} \left(u_m^+ + u_m^- \right) \leq \Gamma \\
\end{aligned}
\right. \right\}.
\end{align}
Budget parameter \(\Gamma\) controls the deviation from nominal values, summed across all buses. We can substitute out variables $u_m^{+}, u_m^{-}, m \in \cM$, and we assume this has been done when referencing $\cU$ in what follows. 

In most of the power systems literature, lines are assumed to be undirected, and an orientation indicates the direction of flow. We represent multiple lines by a triple \((i,j,n)\), which uses the orientation to indicate that positive flow is from \(i\) to \(j\) on the \(n\)-th line between these two buses and negative flow is the opposite. Each bus has a voltage, \(v_i\), and a phase angle, \(\theta_i\). These configurations, along with the line parameters (complex admittance \(y_k = g_k + \sqrt{-1} \, b_k\)), the charging susceptance \(b_k^c\), and the shunt admittance of a bus \(y_i^{sh} = g_i^{sh} + \sqrt{-1} \, b_i^{sh}\) determine the  power flow on line \(k = (i,j,n) \in \cA \), where \(P_k\) and \(Q_k\) denote active and reactive power flow, respectively:
\begin{subequations}\label{eqn:pf}
	\begin{align}
	P_k = &g_k \frac{v_i^2}{\tau_{1,k}^2} - g_k \frac{v_i v_j }{\tau_{1,k} \tau_{2,k}} \cos(\theta_i - \sigma_k - \theta_j) - \nonumber \\
	& b_k \frac{v_i v_j}{\tau_{1,k} \tau_{2,k}} \sin(\theta_i - \sigma_k - \theta_j), \qquad \forall k = (i,j,n) \in \cA\\
	Q_k = &-(b_k + \frac{b_k^c}{2}) \frac{v_i^2}{\tau_{1,k}^2} + b_k \frac{v_i v_j }{\tau_{1,k} \tau_{2,k}}\cos(\theta_i - \sigma_k - \theta_j) - \nonumber \\
	& g_k \frac{v_i v_j}{\tau_{1,k} \tau_{2,k}} \sin(\theta_i - \sigma_k - \theta_j), \qquad \forall k = (i,j,n) \in \cA.
	\end{align}
\end{subequations}

Here we split the tap ratio for each line \(k = (i,j,n) \in \cA\) into \(\tau_{1,k}\) and \(\tau_{2,k}\) to represent the change of voltage at two ends of that line. We have \(\tau_{1,k} = \tau\) and \(\tau_{2,k} = 1\) if a transformer with tap ratio \(\tau\) is located at the bus \(i\) of line \(k = (i,j,n) \in \cA\), while we have \(\tau_{1,k} = 1\), \(\tau_{2,k} = \tau\) if a transformer with tap ratio \(\tau\) is located at the bus \(j\) of line \(k = (i,j,n) \in \cA\). Similarly, for the transformer phase angle shift, if a transformer with phase angle shift is located at the bus \(i\) of line \(k = (i,j,n) \in \cA\), we set \(\sigma_k = \sigma\); otherwise, if a transformer with phase angle shift is located at the bus \(j\) of line \(k = (i,j,n) \in \cA\), we set \(\sigma_k = -\sigma\).

At each bus \(i \in \cN\), we enforce flow conservation of active and reactive power via equations~\eqref{eqn:balance}. The left-hand side of constraint~\eqref{eqn:balance} is the net active and reactive power flowing out of bus \(i\), and they equal the sum of controllable and uncontrollable injections:
\begin{subequations} \label{eqn:balance}
	\begin{equation}
	\sum_{k = (i,j,n) \in \cA} P_k + g_i^{sh} (v_i)^2 = \sum_{g \in \cG_i} s_g^p + \left(u_i^{p,0} + u_i^{p,+} - u_i^{p,-} \right), \ \forall i \in \cN
	\end{equation}
	\begin{equation}
	\sum_{k = (i,j,n) \in \cA} Q_k - b_i^{sh} (v_i)^2 = \sum_{g \in \cG_i} s_g^q + \left(u_i^{q,0} + u_i^{q,+} - u_i^{q,-} \right), \ \forall i \in \cN.
	\end{equation}
\end{subequations}

Constraint~\eqref{eqn:angleDiff} bounds the difference in phase angle between adjacent buses, constraint~\eqref{eqn:lineConstr} limits the apparent power flowing through each line \(k\), and constraints~\eqref{eqn:volConstr}-\eqref{eqn:sqConstr} provide simple bounds on voltage and phase angle at each bus and active and reactive power at each generator:
\begin{subequations}
	\begin{align}
	&\underline{\Delta}_{k} \leq \theta_i - \sigma_k - \theta_j \leq \overline{\Delta}_{k} \qquad \qquad \forall k=(i,j,n) \in \cA \label{eqn:angleDiff}\\
	& P_k^2 + Q_k^2 \leq W_k^2 \qquad \qquad  \qquad \quad \;\;\; \forall k \in \cA \label{eqn:lineConstr}\\
	& \underline{v}_i \leq v_i \leq \overline{v}_i \qquad \qquad  \qquad \qquad \quad \forall i \in \cN \label{eqn:volConstr} \\
	& \underline{\theta}_i \leq \theta_i \leq \overline{\theta}_i \qquad \qquad  \qquad \qquad \quad \forall i \in \cN \label{eqn:angConstr} \\
	& \underline{s}_g^p \leq s_g^p \leq \overline{s}_g^p \qquad \qquad  \qquad \qquad \;\;\; \forall g \in \cG \label{eqn:spConstr}\\
	& \underline{s}_g^q \leq s_g^q \leq \overline{s}_g^q \qquad \qquad  \qquad \qquad \;\;\; \forall g \in \cG. \label{eqn:sqConstr}
	\end{align}
\end{subequations}

We denote the cost of controllable injections as \(c(s^p,s^q)\), and assume \(c\) is convex, where \(s^p\) and \(s^q\) are \(|\cG|\)-dimensional vectors with respective components \(s^p_g\) and \(s^q_g,\ g \in \cG\). The first-stage decision variables, \(s^p\) and \(s^q\), denote controllable injections that cannot adapt to the realized scenario. We allow small adjustments to these injections via variables \(o^{p,+}, o^{p,-}, o^{q,+}, o^{q,-}\), which can be selected once the uncertainty is revealed. These denote near real-time compensation in net generation; these variables have upper bounds proportional to the generation capacity at each bus so that the upper bound is zero for buses without generators. This setting permits greater flexibility than the linearly adaptive control used in previous research \citep{bienstock2014chance, jabr2013adjustable, louca2017robust, lubin2016robust}. We seek a robust optimal controllable injection such that for all possible uncontrollable injections in \(\cU\), there is a feasible system configuration via variables \((v,\theta,o^{p,+},o^{p,-},o^{q,+},o^{q,-},P, Q)\).

We minimize the set point cost, and consider linear and convex quadratic cost functions: 
\begin{eqnarray*}
	&& c(s^p,s^q) = \sum_{g \in \cG} \left[c^p_{g,2} \left(s^p_g\right)^2 + c^p_{g,1} s^p_g + c^q_{g,2} \left(s^q_g\right)^2 + c^q_{g,1} s^q_g \right], \label{eqn:quadCost}
\end{eqnarray*}
where \(c^p_{g,2} \ge 0\) and \(c^q_{g,2} \geq 0\) for all \(g \in \cG\) and take value zero in the linear case.

Convexity of the cost function is important because, although the ACOPF problem has nonconvex constraints, a convex objective function, together with the convex relaxation of the feasible region to be discussed below, yields a convex program. Our robust optimization formulation can be expressed as follows:
\begin{subequations} \label{prob:rACOPF}
	\begin{align}
	\min \quad & c(s^p,s^q)  & \label{eqn:obj}\\
	\text{s.t.} \quad & \underline{s}_g^p \leq s_g^p \leq \overline{s}_g^p \qquad \qquad \qquad \qquad \qquad \qquad \quad \forall g \in \cG \label{eqn:spConstr1}\\
	&  \underline{s}_g^q \leq s_g^q \leq \overline{s}_g^q \qquad \qquad \qquad \qquad \qquad \qquad \quad \forall g \in \cG \label{eqn:sqConstr1}\\
	& P_k^u = g_k \frac{(v_i^u)^2}{\tau_{1,k}^2} - g_k \frac{v_i^u v_j^u}{\tau_{1,k} \tau_{2,k}} \cos(\theta_i^u - \sigma_k - \theta_j^u) -   \nonumber \\
	& \qquad b_k \frac{v_i^u v_j^u}{\tau_{1,k} \tau_{2,k}} \sin(\theta_i^u - \sigma_k - \theta_j^u) \qquad \qquad \forall k = (i,j,n) \in \cA, \ u \in \cU \label{eqn:Pk}\\
	& Q_k^u = -(b_k + \frac{b_k^c}{2}) \frac{(v_i^u)^2}{\tau_{1,k}^2} + b_k \frac{v_i^u v_j^u}{\tau_{1,k} \tau_{2,k}} \cos(\theta_i^u - \sigma_k - \theta_j^u) -  \nonumber\\
	&\qquad g_k \frac{v_i^u v_j^u}{\tau_{1,k} \tau_{2,k}} \sin(\theta_i^u - \sigma_k - \theta_j^u) \qquad \qquad \forall k = (i,j,n) \in \cA,\ u \in \cU \label{eqn:Qk}\\
	& \underline{\Delta}_{k} \leq \theta_i^u - \sigma_k - \theta_j^u \leq \overline{\Delta}_{k} \qquad \qquad \qquad \quad \; \forall k = (i,j,n) \in \cA,\ u \in \cU \label{eqn:thetaDiff}\\
	& (P_k^u)^2 + (Q_k^u)^2 \leq W_k^2 \qquad \qquad \qquad \qquad \;\;\; \forall k \in \cA,\ u \in \cU \label{eqn:lineCons}\\
	& \underline{v}_i \leq v_i^u \leq \overline{v}_i \qquad \qquad \qquad \qquad \qquad \qquad \; \; \forall i \in \cN,\ u \in \cU \label{eqn:volCons}\\
	& \underline{\theta}_i \leq \theta_i^u \leq \overline{\theta}_i \qquad \qquad \qquad \qquad \qquad \qquad \; \; \forall i \in \cN \label{eqn:angCons} \\
	& \sum_{k = (i,j,n) \in \cA} P_k^u + g_i^{sh} (v_i^u)^2 + o_i^{p,-,u} - o_i^{p,+,u}\nonumber \\
	& \qquad \qquad = \sum_{g \in \cG_i} s_g^p + \left(u_i^{p,0} + u_i^{p,+} - u_i^{p,-} \right) \qquad \qquad \forall i \in \cN,\ u \in \cU \label{eqn:balanceP}\\
	& \sum_{k = (i,j,n) \in \cA} Q_k^u - b_i^{sh} (v_i^u)^2 + o_i^{q,-,u} - o_i^{q,+,u} \nonumber \\
	& \qquad \qquad = \sum_{g \in \cG_i} s_g^q + \left(u_i^{q,0} + u_i^{q,+} - u_i^{q,-} \right) \qquad \qquad \forall i \in \cN,\ u \in \cU \label{eqn:balanceQ}\\
	& o_i^{p,+,u} \leq \overline{o}^p_i + \left(h^p_i + \zeta_i^+ u_i^{p,+} - \zeta_i^- u_i^{p,-}\right) \qquad \qquad \quad \;\;\; \forall i \in \cN, u \in \cU \label{eqn:oppbounds}\\
	& o_i^{q,+,u} \leq \overline{o}^q_i + \left(h^q_i + \zeta_i^+ u_i^{q,+} - \zeta_i^- u_i^{q,-}\right) \qquad \qquad \quad \;\;\; \forall i \in \cN, u \in \cU \label{eqn:oqpbounds}\\
	& o_i^{p,-,u} \leq \overline{o}^p_i \qquad \qquad \qquad \qquad \qquad \qquad \qquad \qquad \;\; \forall i \in \cN, u \in \cU \label{eqn:opmbounds}\\
	& o_i^{q,-,u} \leq \overline{o}^q_i \qquad \qquad \qquad \qquad \qquad \qquad \qquad \qquad \;\; \forall i \in \cN, u \in \cU \label{eqn:oqmbounds}\\
	& o_i^{p,+,u}, o_i^{p,-,u}, o_i^{q,+,u}, o_i^{q,-,u} \geq 0 \qquad \qquad \qquad \qquad \;\;\; \forall i \in \cN, u \in \cU. \label{eqn:opositive}
	\end{align}
\end{subequations}

Model~\eqref{prob:rACOPF} seeks a first stage vector of generation dispatch decisions, \((s^p, s^q)\), that minimizes controllable generation cost. All other decision variables, including power compensations, voltages and phase angles at buses, as well as power flow on lines, adapt to the realization of uncertainty. Constraints~\eqref{eqn:spConstr1}-\eqref{eqn:sqConstr1} replicate the simple bounds on injections \eqref{eqn:spConstr}-\eqref{eqn:sqConstr}, constraints~\eqref{eqn:Pk}-\eqref{eqn:Qk} replicate the power flow equations \eqref{eqn:pf} for each \(u \in \cU\), and constraints~\eqref{eqn:thetaDiff}-\eqref{eqn:angCons} similarly replicate~\eqref{eqn:angleDiff}-\eqref{eqn:angConstr}. Constraints~\eqref{eqn:balanceP} and \eqref{eqn:balanceQ} modify constraints~\eqref{eqn:balance} by incorporating the deviation variables, whose values are limited by~\eqref{eqn:oppbounds}-\eqref{eqn:oqmbounds}. The maximum adjustment at a bus, due to traditional generators, is denoted by \(\overline{o}\). Net load uncertainty includes generation uncertainty, due to renewable sources and demand uncertainty. When an uncertain parameter is larger than its nominal value, this can be because load is low or because RES generation is high. In the latter case, we allow for curtailment of RES generation. Parameters \(\zeta^+_i\) and \(\zeta_i^-\) represent the fraction of total uncertainty due to RES generation, and \(h^p_i\) and \(h^q_i\) denote nominal renewable generation. The right-hand sides of constraints~\eqref{eqn:oppbounds} and~\eqref{eqn:oqpbounds} capture the option for curtailment, and we discuss this in greater detail in Section~\ref{subsec:alphSel}. It is well known that the power flow equations~\eqref{eqn:pf}, as well as the shunt components in~\eqref{eqn:balanceP} and~\eqref{eqn:balanceQ}, are nonconvex, and so model~(\ref{prob:rACOPF}) is an infinite-dimensional nonconvex robust optimization problem with recourse.

There are multiple convex relaxation schemes for ACOPF problems. In the semidefinite programming relaxation of \citet{bai2011semidefinite} and \citet{bai2008semidefinite}, the vector of voltage variables in model~\eqref{prob:rACOPF} is re-expressed as a higher-dimensional matrix, coupled with a rank-one constraint and a positive semidefinite requirement, along with a collection of linear constraints. After dropping the rank-one constraint, the relaxed problem becomes an SDP and can be solved by an interior point method. Experience on realistically sized instances suggests that such SDP formulations are computationally expensive, and so \citet{jabr2006radial} proposes a further relaxation of the positive semidefinite constraint, yielding a second-order cone program (SOCP). Although computationally easier to solve, this SOCP relaxation has the disadvantage of tending to exhibit a larger optimality gap than the SDP relaxation for many test cases. See \citet{low2014convex} for a detailed review of such SDP and SOCP relaxations.

We use the convex relaxation that \citet{coffrin2016qc} call the quadratic convex (QC) relaxation. While the QC formulation is also an SOCP, it tightens the relaxation compared to previous SOCP formulations.~\citet{coffrin2016qc} suggest relaxing equation~\eqref{eqn:pf} by replacing trigonometric functions by quadratic functions and using a McCormick relaxation to linearize the multi-linear terms. {The quadratic terms in~\eqref{eqn:balanceP} and~\eqref{eqn:balanceQ}, \(v_i^2\), are replaced by \(\hat{v}_i\), which is constrained by a linear upper bound and a quadratic lower bound.} The formulation of the QC relaxation of model~\eqref{prob:rACOPF} is detailed in Appendix~\ref{appen:QCRelaxation}. Here, we use generic notation \(x\) to represent the system configuration and express the convex relaxation of model~\eqref{prob:rACOPF}, as formulated in Appendix~\ref{appen:QCRelaxation}, more compactly in model~\eqref{prob:rcACOPF} below. 

In what follows, we largely use a vector form to denote the controllable and uncontrollable injections for conciseness. A symbol without a subscript represents a vector, while a subscript-indexed symbol represents a specific component within that vector. Here we denote \(u^{p,+}\), \(u^{p,-}\), \(u^{q,+}\) and \(u^{q,-}\) as \(|\cN|\)-dimensional vectors of uncontrollable active and reactive deviation. Similar notation is used for \(\overline{u}, \underline{u}, u^0, s, \overline{s}\) and \(\underline{s}\) as:
\[\overline{u} = \begin{bmatrix}
\overline{u}^p\\
\overline{u}^q
\end{bmatrix},\ \underline{u} = \begin{bmatrix}
\underline{u}^p\\
\underline{u}^q
\end{bmatrix},\ u^0 = \begin{bmatrix}
u^{p,0}\\
u^{q,0}
\end{bmatrix},\ s = \begin{bmatrix}
s^p\\
s^q
\end{bmatrix},\ \overline{s} = \begin{bmatrix}
\overline{s}^p\\
\overline{s}^q
\end{bmatrix},\ \underline{s} = \begin{bmatrix}
\underline{s}^p\\
\underline{s}^q
\end{bmatrix}.\]
In this context, we also represent the active and reactive controllable injections of each bus \(i \in \cN\) as a linear transformation of the vector of generation \(s^p\) and \(s^q\):
\[Ds^p = \left[
\sum_{g \in \cG_i} s^p_g \right]_{i \in \cN} \quad \text{ and }\quad Ds^q = \left[
\sum_{g \in \cG_i} s^q_g \right]_{i \in \cN}, \]
for an appropriate matrix \(D\). We use \(\zeta^+\) and \(\zeta^-\) to denote \(|\cN| \times |\cN|\) diagonal matrices with entries \(\zeta_i^+\) and \(\zeta_i^-\), \(\forall i \in \cN\). This leads to the following compact formulation for the convex relaxation of model~\eqref{prob:rACOPF}: 
\begin{subequations} \label{prob:rcACOPF}
	\begin{align}
	\min \quad & c(s) &  \\
	\text{s.t.} \quad & \underline{s} \leq s \leq \overline{s} \qquad \qquad \label{eqn:csConstr}\\
	& Ax^u \leq b \qquad \qquad \qquad \qquad \qquad \qquad \qquad \quad \; \forall u \in \cU \label{eqn:clinear}\\
	& \|B_i x^u + a_i \|_2 \leq e_i^\top x^u + f_i \qquad \qquad \qquad \qquad \forall i = 1, \dots, m_c , \ u \in \cU \label{eqn:csoc}\\
	& A^{op} x^u \leq \overline{o}^{p} + h^p + \zeta^+ u^{p,+} - \zeta^- u^{p,-} \qquad \qquad \forall u \in \cU \label{eqn:cop}\\
	& A^{oq} x^u \leq \overline{o}^{q} + h^q + \zeta^+ u^{q,+} - \zeta^- u^{q,-} \qquad \qquad \forall u \in \cU \label{eqn:coq}\\
	& A^p x^u = Ds^p + u^{p,0} + u^{p,+} - u^{p,-} \qquad \qquad \quad \forall u \in \cU \label{eqn:cbalanceP} \\
	& A^q x^u = Ds^q + u^{q,0} + u^{q,+} - u^{q,-} \qquad \qquad \quad \forall u \in \cU. \label{eqn:cbalanceQ}
	\end{align}
\end{subequations}

Constraint~\eqref{eqn:csConstr} replicates the analogous constraints~\eqref{eqn:spConstr1} and \eqref{eqn:sqConstr1}. The linear inequality~\eqref{eqn:clinear} and the SOCP constraint~\eqref{eqn:csoc} capture constraint~\eqref{eqn:lineCons}, and the relaxation of the nonlinear terms in constraints~\eqref{eqn:Pk}-\eqref{eqn:Qk} and~\eqref{eqn:balanceP}-\eqref{eqn:balanceQ}, while the linear inequality~\eqref{eqn:clinear} also includes~\eqref{eqn:thetaDiff}, \eqref{eqn:volCons}-\eqref{eqn:angCons}, and~\eqref{eqn:opmbounds}-\eqref{eqn:opositive}. Constraints~\eqref{eqn:cop} and~\eqref{eqn:coq} match their counterparts~\eqref{eqn:oppbounds} and~\eqref{eqn:oqpbounds}. Finally, constraints~\eqref{eqn:cbalanceP} and \eqref{eqn:cbalanceQ} replicate linearized constraints~\eqref{eqn:balanceP} and \eqref{eqn:balanceQ}. Model~(\ref{prob:rcACOPF}) can also represent the robust convex relaxation of the ACOPF problem in which we replace the QC relaxation with alternative convex relaxations discussed in \citet{bai2008semidefinite,jabr2006radial} and \citet{kocuk2016strong}.

Model~\eqref{prob:rcACOPF} is an infinite-dimensional convex optimization problem, and is an example of robust optimization with recourse. Such models have been discussed in the context of linear programming in \citet{terry2009thesis} and \citet{thiele2009robust}. In power systems optimization, similar formulations have been applied to unit commitment problems \citep{jiang2012robust,jiang2014two}, joint reserve and energy dispatch~\citep{zugno2015robust}, and microgrid operations~\citep{khodaei2014resiliency}. There has been limited work of which we are aware involving conic programming, or more general convex programming, variants of such models~\citep[although we can point to][]{terry2009thesis}. In the next section we discuss reformulation of this problem and an algorithm to solve the reformulated finite-dimensional problem. 

\section{A Cutting-Plane Method}
\label{sec:cutting}
In this section we propose a cutting-plane method to solve the robust convex optimization problem~\eqref{prob:rcACOPF}. To facilitate decomposition of model~\eqref{prob:rcACOPF}, we project onto the set of feasible \((s^p,s^q)\) variables, and we employ an outer approximation to iteratively characterize this set. At each iteration, given a candidate solution, we compute, and add to the master problem, the most-violated inequality. With introduction of auxiliary binary decision variables, we can transform what would otherwise be an infinite number of constraints in \eqref{prob:rcACOPF} into a finite formulation and obtain a solution within some acceptable tolerance from the feasible set. 
\subsection{Master Problem and Subproblems}
Similar to the generalized Benders' decomposition method of \citet{geoffrion1972generalized}, we can rewrite model~\eqref{prob:rcACOPF} as: 
\begin{subequations}
	\begin{align}
	\min \quad &c(s)\\
	\text{s.t.} \quad &  s \in \cS \equiv {\cap_{u \in \cU} \cS^u} \cap \{s \mid \underline{s} \leq s \leq \overline{s} \},  \label{eqn:inSet}
	\end{align}
\end{subequations}
where for each \(u \in \cU\) we have the induced feasibility set
\begin{equation} \label{eqn:calSudef}
\cS^u = \left\{ s\ \left| \ \exists x\ \text{s.t. }
\begin{aligned}
& A x \leq b\\
& \| B_i x + a_i\|_2 \leq e_i^\top x + f_i \quad \forall i = 1, \dots, m_c \\
& A^{op} x \leq \overline{o}^{p} + h^p + \zeta^+ u^{p,+} - \zeta^- u^{p,-}\\
& A^{oq} x \leq \overline{o}^{q} + h^q + \zeta^+ u^{q,+} - \zeta^- u^{q,-}\\
& A^p x = D s^p + u^{p,0} + u^{p,+} - u^{p,-}\\ 
& A^q x = D s^q + u^{q,0} + u^{q,+} - u^{q,-}\\
\end{aligned}\right.
\right\}.
\end{equation}

This reformulation motivates a cutting-plane algorithm in which we iteratively solve a master problem and a collection of SOCP subproblems. In addition to the simple bounds, constraint~\eqref{eqn:inSet} requires that \(s\) be in the intersection of \(\cS^u,\ \forall u \in \cU\). When we solve the subproblems, we either find a feasible \(x^u\) for each \(u \in \cU\), or we generate linear cuts, each of which is a valid outer approximation for \(\cS\). The master \((M)\) and the subproblem \((S^u)\) are as follows:
\begin{subequations}\label{prob:master}
	\begin{align}
	(M) \quad V^* = \min \quad & c(s) \\
	\text{s.t.} \quad & \underline{s} \leq s \leq \overline{s} \\
	& -{\lambda^{p,k}}^\top D s^p - {\lambda^{q,k}}^\top D s^q + z^{k} \leq 0  \qquad \qquad \forall k = 1, 2, \dots \label{eqn:cut}
	\end{align}
\end{subequations}
\vspace{-0.5cm}
\begin{subequations}\label{prob:sub}
	\begin{align}
	(S^u) \quad \min \quad & 1^\top \left(l^{p,+}+l^{p,-}+l^{q,+}+l^{q,-} \right) \\
	\text{s.t.} \quad & A x \leq b  \label{eqn:linearCon}\\
	& \|B_i x + a_i \|_2 \leq e_i^\top x + f_i \qquad \qquad \qquad \qquad \; \forall i = 1,\dots, m_c \label{eqn:coneCon}\\
	& A^{op} x \leq \overline{o}^{p} + h^p + \zeta^+ u^{p,+} - \zeta^- u^{p,-} \label{eqn:cop1}\\
	& A^{oq} x \leq \overline{o}^{q} + h^q + \zeta^+ u^{q,+} - \zeta^- u^{q,-} \label{eqn:coq1}\\
	& A^p x + l^{p,+} -l^{p,-} = D \hat{s}^p + u^{p,0} + u^{p,+} - u^{p,-} \label{eqn:pCon}\\ 
	& A^q x + l^{q,+} -l^{q,-} = D \hat{s}^q + u^{q,0} + u^{q,+} - u^{q,-} \label{eqn:qCon}\\
	& l^{p,+}, l^{p,-}, l^{q,+} , l^{q,-} \geq 0. \label{eqn:lnonneg}
	\end{align}
\end{subequations}

Here, \(\lambda^p\) and \(\lambda^q\) denote dual variables for constraints~\eqref{eqn:pCon} and~\eqref{eqn:qCon}, respectively. We introduce artificial variables \(l^{p,+}, l^{p,-}, l^{q,+}\), and \(l^{q,-}\) in $(S^u)$ to represent violation of the power balance constraints. At optimality, if the vector \((l^{p,+}, l^{p,-}, l^{q,+}, l^{q,-})\) is nonzero then for the given master solution, $\hat{s}$, and uncertainty realization, $u$, there is no feasible \(x\) such that constraints in~\eqref{eqn:calSudef} can be satisfied. As a result, by a generalized theorem of the alternative~\citep[see][Section~5.8]{boyd2004convex} a feasibility cut~\eqref{eqn:cut} can be generated to ensure the master~$(M)$ cannot again select $\hat{s}$ in subsequent iterations. (We return to this in detail in Lemma~\ref{lemma:subset}.) Index~\(k\) corresponds to the \(k\)-th feasibility cut, and the scalar cut intercept, \(z^k\), accounts for all objective function terms in the dual of model~\eqref{prob:sub} that do not involve \(\hat{s}^p\) and \(\hat{s}^q\). 

The decomposition algorithm is not directly implementable because there are infinitely many subproblems, \((S^u)\). So, we instead seek the most violated inequality across all elements of the uncertainty set, which results in the following max-min problem:
\begin{equation}\label{prob:primalSub}
	\begin{aligned}
	\max_{u \in \cU} \quad \min_{l,x} \quad & 1^\top \left(l^{p,+}+l^{p,-}+l^{q,+}+l^{q,-} \right)\\
	\text{s.t.} \quad & \eqref{eqn:linearCon}\mbox{-}\eqref{eqn:lnonneg}.
	\end{aligned}
\end{equation}

To reformulate model~\eqref{prob:primalSub} in a computationally tractable manner we first take the dual of the inner minimization. We denote the dual variables for constraints \eqref{eqn:linearCon} and \eqref{eqn:cop1}-\eqref{eqn:qCon} by \(\lambda, \lambda^{op}, \lambda^{oq}, \lambda^p\), and \(\lambda^q\). For the second-order cone constraints in~\eqref{eqn:coneCon}, we denote the dual variables as \((\mu_i,\nu_i), \ i = 1, \dots, m_c\). 
Then taking the dual yields: 
\begin{subequations} \label{prob:dualSub}
	\begin{align}
	\max_{u \in \cU} \ \max_{\lambda, \lambda^{op}, \lambda^{oq}, \lambda^p, \lambda^q, \mu, \nu} \quad &- \lambda^\top b -\sum_{i = 1}^{m_c} \left( \nu_i f_i + \mu_i^\top a_i \right) - \nonumber \\ 
	& {\lambda^{op}}^\top (\overline{o}^p + h^p + \zeta^+ u^{p,+} - \zeta^- u^{p,-}) - {\lambda^{oq}}^\top (\overline{o}^q + h^q + \zeta^+ u^{q,+} - \zeta^- u^{q,-}) - \nonumber \\
	&  {\lambda^p}^\top \left(D \hat{s}^p+ u^{p,0} + u^{p,+} - u^{p,-}\right)  -{\lambda^q}^\top \left(D \hat{s}^q+ u^{q,0} + u^{q,+} - u^{q,-}\right)  \label{eqn:objdual} \\
	\text{s.t.} \qquad \qquad & \lambda^\top A + {\lambda^{op}}^\top A^{op} +  {\lambda^{oq}}^\top A^{oq} + {\lambda^p}^\top A^p + {\lambda^q}^\top A^q  \nonumber \\
	& \qquad \qquad - \sum_{i = 1}^{m_c} \left( {\mu_i}^\top B_i + \nu_i e_i^\top \right) = 0^\top \label{eqn:budgetlinCombine}\\
	& \| \mu_i \|_2 \leq \nu_i \qquad \qquad \quad \; \forall i = 1, \dots, m_c \label{eqn:budgetSOCP}\\
	& -1 \leq \lambda^p_i \leq 1 \qquad \qquad \forall i \in \cN \label{eqn:budgetLambdaP} \\
	& -1 \leq \lambda^q_i \leq 1 \qquad \qquad \forall i \in \cN \label{eqn:budgetLambdaQ} \\
	&\lambda,\lambda^{op},\lambda^{oq} \geq 0. 
	\end{align}
\end{subequations}
The optimal value of the inner maximization problem is a convex function of the \(4|\cN|\)-dimensional vector \(u\). The outer problem maximizes this convex function over the polytope~\(\cU\). We know that an optimal solution can be obtained by restricting attention to the extreme points of \(\cU\) \citep[e.g.,][Proposition~2.4.1]{bertsekas_2009}, and we denote this set by~\(\cU^E\). When \(\cU\) has an amenable structure this can allow for a finite reformulation. In what follows, we assume that \(\cU\) is defined as in equation~\eqref{eqn:setU}, and we introduce \(2|\cM|\) binary variables to model the extreme points:
\begin{subequations}
	\label{prob:dualSubIBudget}
	\begin{align}
	(SDI) \ \max \quad &- \lambda^\top b  -\sum_{i = 1}^{m_c} \left( \nu_i f_i + \mu_i^\top a_i \right)  - \sum_{i \in \cN} \left[ y^+_{m_i} \left(\overline{u}^p_i - u^{p,0}_i \right)(\lambda^{p}_i + \zeta^+_i \lambda^{op}_i) + \right. \nonumber \\
	& \left. y^-_{m_i} \left(\underline{u}^p_i - u^{p,0}_i \right) (\lambda^p_i + \zeta^-_i \lambda^{op}_i) + y^+_{m_i} \left(\overline{u}^q_i - u^{q,0}_i \right) (\lambda^q_i + \zeta^+_i \lambda^{oq}_i) + \right. \nonumber \\
	& \left.  y^-_{m_i} \left(\underline{u}^q_i - u^{q,0}_i \right) (\lambda^q_i + \zeta^-_i \lambda^{oq}_i)\right]
	- {\lambda^{op}}^\top (\overline{o}^p + h^p) - {\lambda^{oq}}^\top (\overline{o}^q + h^q) - \nonumber \\
	& \left[ {\lambda^p}^\top \left(D \hat{s}^p + u^{p,0} \right) + {\lambda^q}^\top \left(D \hat{s}^q + u^{q,0} \right)\right] \label{eqn:SDIobj} \\
	\text{s.t.} \quad & \eqref{eqn:budgetlinCombine} - \eqref{eqn:budgetLambdaQ} \nonumber\\
	& y_m^{+} + y_m^{-} \leq 1 \qquad \qquad \qquad \quad \forall m \in \cM \label{eqn:budget1}\\
	& \sum_{m \in \cM} \left(y_m^{+} + y_m^{-} \right) \leq \Gamma \label{eqn:budget}\\
	& y_m^+, y_m^- \in \{0,1\} \qquad \qquad \quad \;\; \forall m \in \cM \\
	& \lambda,\lambda^{op},\lambda^{oq} \geq 0. \label{eqn:budgetLambda}
	\end{align}
\end{subequations}

For \(i \in \cN_m\), we use \(y^+_m\) to indicate that \(u_i^{p,+}\) and \(u_i^{q,+}\) take their upper bound and \(y^-_m\) to indicate that \(u_i^{p,-}\) and \(u_i^{q,-}\) take their lower bound. The objective function in~\eqref{eqn:SDIobj} includes bilinear terms such as \(\lambda_i^p y_{m_i}\), where \(m_i\) is used to indicate bus \(i\)'s cluster. These are linearized in a straightforward way as shown in Appendix~\ref{appen:QCDual}. We use $(SDI)$ to denote the subproblem in dual form with integer-constrained variables.

Constraints~\eqref{eqn:budget1} enforce that at most one, instead of exactly one, end point of the feasible range is taken, and constraint \eqref{eqn:budget} requires that at most \(\Gamma\) clusters of uncontrollable injections take their end-point value. We include in Appendix~\ref{appen:QCDual} the full formulation of model~\eqref{prob:dualSubIBudget}, which is derived from the convex quadratic relaxation detailed in Appendix~\ref{appen:QCRelaxation}.

Algorithm~\ref{alg:Cut} formalizes our cutting-plane procedure, where at iteration \(k\) we solve the master problem, \((M)\), and obtain \((\hat{s}^{p,k},\hat{s}^{q,k})\). Then, using the uncertainty set defined in equation~\eqref{eqn:setU}, we solve model~\eqref{prob:dualSubIBudget}, and denote the optimal value by \(z_{feas}^k\) and part of the optimal solution by \(\lambda^{p,k}, \lambda^{q,k}\). If \(z^k_{feas} > 0\), we then generate the most violated cut as:
\begin{equation}
z_{feas}^k - {\lambda^{p,k}}^\top D (s^p - \hat{s}^{p,k}) - {\lambda^{q,k}}^\top D(s^q - \hat{s}^{q,k}) \leq 0. \label{eqn:cut1}
\end{equation}
With \(z^k = z_{feas}^k + {\lambda^{p,k}}^\top D \hat{s}^{p,k} + {\lambda^{q,k}}^\top D \hat{s}^{q,k}\), inequality~\eqref{eqn:cut1} is of form~\eqref{eqn:cut}. 
\begin{algorithm}
	\caption{Cutting-plane algorithm for model~\eqref{prob:rcACOPF}}
	\label{alg:Cut}
	\begin{algorithmic}[1]
		\State Initialize with iteration number \(k := 1\) and tolerance \(\epsilon > 0\);
		\State Solve master problem \((M)\) and obtain solution \((\hat{s}^{p,k}, \hat{s}^{q,k})\) and optimal value \(V^*\); 
		\State Solve \((SDI)\) with \((\hat{s}^{p,k}, \hat{s}^{q,k})\) and obtain solution \((\lambda^{p,k}, \lambda^{q,k})\) and optimal value \(z_{feas}^k\);
		\While{\(z_{feas}^k > \epsilon\)} 
		\State Append \(z_{feas}^k -{\lambda^{p,k}}^\top D (s^p - \hat{s}^{p,k}) - {\lambda^{q,k}}^\top D (s^q - \hat{s}^{q,k}) \leq 0\) to constraints~\eqref{eqn:cut} of \((M)\);
		\State Let \(k := k+1\);
		\State Solve \((M)\) and obtain solution \((\hat{s}^{p,k}, \hat{s}^{q,k})\);
		\If{\((M)\) is feasible}
		\State Obtain optimal value \(V^*\); 
		\Else
		\State Stop and return the status of infeasibility;
		\EndIf
		\State Solve \((SDI)\) with \((\hat{s}^{p,k}, \hat{s}^{q,k})\) and obtain solution \((\lambda^{p,k}, \lambda^{q,k})\) and optimal value \(z_{feas}^k\); $\!\!\!\!\!\!$
		\vspace{0.1cm}
		\EndWhile{\textbf{end while}}
		\State Output \(V^*\) as a lower bound on the optimal value of model~\eqref{prob:rcACOPF}, and output \((\hat{s}^{p,k}, \hat{s}^{q,k})\) as an \(\epsilon\)-feasible solution.
	\end{algorithmic}
\end{algorithm}
	
	\subsection{Convergence of the Algorithm}
	Given \(\epsilon > 0\), we show that in a finite number of iterations Algorithm~\ref{alg:Cut} either finds an \(\epsilon\)-feasible solution or terminates with a statement that model~\eqref{prob:rcACOPF}---and hence model~\eqref{prob:rACOPF}---is infeasible. Furthermore, the sequence of solutions generated by our algorithm converges to an optimal solution when the tolerance in the algorithm is \(\epsilon = 0\). We make the notion of an ``\(\epsilon\)-feasible" solution precise as follows.
	\begin{definition}
		Let \(\epsilon >0 \). An \(s \in \{s \mid \underline{s} \leq s \leq \overline{s} \}\) is \(\epsilon\)-feasible to model~\eqref{prob:rcACOPF} if for each \(u \in \cU\) there exists an \(\hat{s} \in \mathcal{B}_{\epsilon}(s)\) such that 
		\begin{equation}
		\left\{x\ \left|\ \begin{aligned}
		& Ax \leq b \\
		& \|B_i x + a_i \|_2 \leq e_i^\top x + f_i \quad \forall i = 1, \dots, m_c\\
		& A^{op} x \leq \overline{o}^{p} + h^p + \zeta^+ u^{p,+} - \zeta^- u^{p,-} \qquad A^{oq} x \leq \overline{o}^{q} + h^q + \zeta^+ u^{q,+} - \zeta^- u^{q,-}\\
		& A^p x = D \hat{s}^p + u^{p,0} + u^{p,+} - u^{p,-} \qquad \quad  A^q x = D \hat{s}^q + u^{q,0} + u^{q,+} - u^{q,-}
		\end{aligned} 
		\right\} \right.  \neq \emptyset,
		\end{equation}
		where \(\mathcal{B}_{\epsilon}(s)\) is an \(l_1\) ball with center \(s\) and radius \(\epsilon\).
	\end{definition}
	For an \(\epsilon\)-feasible $s$, the \(l_1\) distance from \(s\) to the corresponding \(\hat{s}\) is at most \(\epsilon\) for each \(u \in \cU\). The definition does not ensure that there is a uniform \(\hat{s}\) that works for all \(u \in \cU\). To establish convergence properties of Algorithm~\ref{alg:Cut}, we make the following assumptions:
	\begin{assumption}
		Function \(c(\cdot)\) is convex and continuous on domain defined by~\eqref{eqn:inSet}. \label{assumption1}
	\end{assumption}

	\begin{assumption}
		Set \begin{equation*}
		\left\{x\ \left|\ \begin{aligned}
		& Ax \leq b\\
		& \|B_i x + a_i \|_2 \leq e_i^\top x + f_i \quad \forall i = 1, \dots, m_c\\
		& A^{op} x \leq \overline{o}^{p} + h^p + \zeta^+ u^{p,+} - \zeta^- u^{p,-}\\
		& A^{oq} x \leq \overline{o}^{q} + h^q + \zeta^+ u^{q,+} - \zeta^- u^{q,-}
		\end{aligned} 
		\right\} \right. 
		\end{equation*}  
		is non-empty, and hence model~\eqref{prob:sub} is feasible, for all \(u \in \cU\). \label{assumption2} 
	\end{assumption}

	\begin{assumption}
		Set \(\cU\) is defined by~\eqref{eqn:setU}.  \label{assumption3}
	\end{assumption}

	Assumption~\ref{assumption1} is consistent with the treatment of thermal generators in the power systems literature and, to our knowledge, in industry practice. Thermal generation costs are dominated by the cost of fuel, and are typically modeled adequately via convex piecewise linear or quadratic functions. However, the physics of some power plants, e.g., certain combined-cycle plants, dictate no-operate regions that can lead to nonsmooth, nonconvex cost functions, which we do not address. Assumption~\ref{assumption2} should hold with great generality for an actual power system because the set is a relaxation of the system's constraints, which does not include load satisfaction. Assumption~\ref{assumption3} is revisited in Section~\ref{subsec:alphSel}. We now establish convergence properties of the sequence of solutions generated by Algorithm~\ref{alg:Cut}.
	\begin{lemma} \label{lemma:convexity}
		Let \(Z^u(s)\) denote the optimal value of model~\eqref{prob:sub} for a specific \(u \in \cU\), where \(\hat{s}\) on the right-hand side of constraints~\eqref{eqn:pCon} and~\eqref{eqn:qCon} is replaced by \(s\), and let \(Z(s)\) denote the analogous optimal value for model~\eqref{prob:primalSub}. If Assumptions~\ref{assumption2} and~\ref{assumption3} hold, then both \(Z^u(\cdot)\) and \(Z(\cdot)\) are convex on the domain \(\Re^{2|\cG|}\). 
	\end{lemma}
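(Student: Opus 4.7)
The plan is to establish convexity of $Z^u$ first by a direct convex-combination argument, and then deduce convexity of $Z$ as a pointwise supremum of convex functions. Fix $u \in \cU$. By Assumption~\ref{assumption2}, the set defined by constraints~\eqref{eqn:linearCon}, \eqref{eqn:coneCon}, \eqref{eqn:cop1}, and~\eqref{eqn:coq1} is non-empty; since the artificial variables $l^{p,+},l^{p,-},l^{q,+},l^{q,-} \geq 0$ appear with coefficients $\pm 1$ in the balance equalities~\eqref{eqn:pCon}-\eqref{eqn:qCon}, any such $x$ can be extended to a feasible $(x,l)$ for any right-hand side. Hence the subproblem~\eqref{prob:sub} is feasible for every $s \in \Re^{2|\cG|}$, and because the objective is bounded below by $0$, $Z^u(s)$ is finite on all of $\Re^{2|\cG|}$.

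Next, take $s_1, s_2 \in \Re^{2|\cG|}$ and $\alpha \in [0,1]$, and let $s_\alpha = \alpha s_1 + (1-\alpha) s_2$. Let $(x_1, l_1)$ and $(x_2, l_2)$ attain $Z^u(s_1)$ and $Z^u(s_2)$, and set $(x_\alpha, l_\alpha) := \alpha (x_1,l_1) + (1-\alpha)(x_2,l_2)$. Constraints~\eqref{eqn:linearCon}, \eqref{eqn:cop1}-\eqref{eqn:coq1}, and~\eqref{eqn:lnonneg} are linear or sign constraints, so they are preserved under convex combinations; constraints~\eqref{eqn:coneCon} define convex sets, so they too are preserved; and the balance equalities~\eqref{eqn:pCon}-\eqref{eqn:qCon} depend affinely on $(x,l,s)$ jointly, so the convex combination satisfies them with right-hand sides parameterized by $s_\alpha$. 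Therefore $(x_\alpha, l_\alpha)$ is feasible for the subproblem at $s_\alpha$, which yields
\begin{equation*}
Z^u(s_\alpha) \;\leq\; 1^\top l_\alpha \;=\; \alpha \cdot 1^\top l_1 + (1-\alpha)\cdot 1^\top l_2 \;=\; \alpha Z^u(s_1) + (1-\alpha) Z^u(s_2),
\end{equation*}
establishing convexity of $Z^u$.

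For $Z(\cdot)$, observe that $Z(s) = \sup_{u \in \cU} Z^u(s)$; by Assumption~\ref{assumption3}, $\cU$ defined in~\eqref{eqn:setU} is a non-empty compact polytope, so this supremum is over a non-empty set. The pointwise supremum of a family of convex functions is convex, which gives convexity of $Z$ on $\Re^{2|\cG|}$.

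The argument is largely routine; the only non-trivial point is ensuring $Z^u$ is finite-valued everywhere, and this is precisely what the artificial variables in~\eqref{eqn:pCon}-\eqref{eqn:qCon} buy us in combination with Assumption~\ref{assumption2}. Without that, the value function could take the value $+\infty$, and while it would still be convex in the extended-valued sense, the lemma as stated asserts convexity over $\Re^{2|\cG|}$, so finiteness is the detail to verify.
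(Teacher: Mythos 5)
Your proof is correct in substance but takes a genuinely different route from the paper's. The paper argues via duality: by Assumption~\ref{assumption2} the subproblem~\eqref{prob:sub} is feasible with a finite optimal value, so \(Z^u(s)\) equals the optimal value of its dual, whose feasible region does not depend on \(s\) and whose objective is affine in \(s\); hence \(Z^u(\cdot)\) is a pointwise supremum of affine functions of \(s\), and \(Z(\cdot)\) is a further supremum over \(\cU\). You instead work entirely in the primal, exploiting the fact that the feasible set of~\eqref{prob:sub} is jointly convex in \((x,l,s)\) and the objective is linear in \(l\), so the value function is convex by the standard convex-combination argument. Your route is more elementary and sidesteps the strong-duality step the paper leans on (which for an SOCP formally requires a constraint qualification); the paper's route has the advantage of exhibiting \(Z^u\) explicitly as a supremum of affine minorants, which is exactly the structure the feasibility cuts~\eqref{eqn:cut1} and Lemma~\ref{lemma:subset} exploit later.

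One detail you should patch: you assert that \((x_1,l_1)\) and \((x_2,l_2)\) \emph{attain} \(Z^u(s_1)\) and \(Z^u(s_2)\). Attainment is not automatic for a second-order cone program with a merely finite infimum (the image of the feasible \(x\)-set under \(A^p\), \(A^q\) need not be closed), and nothing in Assumptions~\ref{assumption2}--\ref{assumption3} guarantees it. The fix is routine: take \(\varepsilon\)-optimal feasible points with \(1^\top l_i \leq Z^u(s_i) + \varepsilon\), run the same convex-combination argument to get \(Z^u(s_\alpha) \leq \alpha Z^u(s_1) + (1-\alpha) Z^u(s_2) + \varepsilon\), and let \(\varepsilon \downarrow 0\). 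With that adjustment the argument is complete; your observation that feasibility of the extended system for every \(s\) (and hence finiteness of \(Z^u\) on all of \(\Re^{2|\cG|}\)) rests on the artificial variables plus Assumption~\ref{assumption2} is exactly the right point to isolate.
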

	\begin{proof}
		[Proof of Lemma~\ref{lemma:convexity}]
		The function $Z^u(s)$ is the optimal value of model~\eqref{prob:sub}, which is feasible by Assumption~\ref{assumption2} for any \(s \in \Re^{2 |\cG|}\), and hence has a finite optimal value. Thus \(Z^u(s)\) is also the optimal value of the dual of model~\eqref{prob:sub}. The dual's feasible region is independent of \(s\), and its objective function is an affine function of \(s\). Therefore, \(Z^u(\cdot)\) is the maximum of a collection of affine functions in \(s\), and hence convex. Furthermore, \(Z(\cdot)\) is the maximum of convex functions \(Z^u(\cdot)\) over the set of \(\cU\), and so \(Z(\cdot)\) is also convex.
	\end{proof}
	
	\begin{lemma} \label{lemma:subset}
		Let \(\cS^k = \{s \mid \underline{s} \leq s \leq \overline{s} ,\ z_{feas}^j - {\lambda^{p,j}}^\top D (s^p - \hat{s}^{p,j}) - {\lambda^{q,j}}^\top D (s^q - \hat{s}^{q,j}) \leq 0,\  \forall j = 1,\dots,k \}\), where these cuts are defined in~\eqref{eqn:cut1}. If Assumptions~\ref{assumption2} and~\ref{assumption3} hold then \(\cS \subseteq \cS^k,\  \forall k = 1,2,\dots \).
	\end{lemma}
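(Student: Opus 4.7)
The plan is to show that any $s \in \cS$ satisfies every cut generated through iteration $k$, by invoking weak duality on $(S^u)$ at the uncertainty realization used to generate each cut.

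First I would observe that for any $s \in \cS$, we have $s \in \cS^u$ for every $u \in \cU$, so by the definition of $\cS^u$ in \eqref{eqn:calSudef} there exists an $x$ satisfying all the constraints in $(S^u)$ with slack vector $(l^{p,+}, l^{p,-}, l^{q,+}, l^{q,-}) = 0$. Since the objective of $(S^u)$ is $1^\top(l^{p,+}+l^{p,-}+l^{q,+}+l^{q,-}) \geq 0$, this gives $Z^u(s) = 0$ for all $u \in \cU$, and hence $Z(s) = 0$.

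Next I would unpack what the $j$-th cut encodes. At iteration $j$, solving $(SDI)$ (equivalently, the max-max in \eqref{prob:dualSub} via Assumption~\ref{assumption3}) at the current candidate $\hat{s}^j = (\hat{s}^{p,j}, \hat{s}^{q,j})$ produces an extreme-point realization $u^j \in \cU$ together with multipliers $(\lambda^{p,j}, \lambda^{q,j}, \mu^j, \nu^j, \lambda^j, \lambda^{op,j}, \lambda^{oq,j})$ that are feasible for the dual of $(S^{u^j})$, with dual objective value $z_{feas}^j$. By strong duality for the SOCP $(S^{u^j})$, available because Assumption~\ref{assumption2} guarantees primal feasibility, we have $z_{feas}^j = Z^{u^j}(\hat{s}^j) = Z(\hat{s}^j)$.

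The crucial observation is that the dual feasible region \eqref{eqn:budgetlinCombine}--\eqref{eqn:budgetLambdaQ} does not depend on $s$; only the objective \eqref{eqn:objdual} depends on $s$, and it does so affinely through the terms $-{\lambda^p}^\top D s^p - {\lambda^q}^\top D s^q$. Therefore the same multipliers remain dual-feasible for $(S^{u^j})$ at any other right-hand-side value $s$, and the dual objective simply shifts to
\begin{equation*}
z_{feas}^j - {\lambda^{p,j}}^\top D(s^p - \hat{s}^{p,j}) - {\lambda^{q,j}}^\top D(s^q - \hat{s}^{q,j}).
\end{equation*}
Weak duality applied to $(S^{u^j})$ with right-hand side $s$ then yields
\begin{equation*}
Z^{u^j}(s) \; \geq \; z_{feas}^j - {\lambda^{p,j}}^\top D(s^p - \hat{s}^{p,j}) - {\lambda^{q,j}}^\top D(s^q - \hat{s}^{q,j}).
\end{equation*}

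Finally, combining the two pieces: for $s \in \cS$, the left-hand side above is $Z^{u^j}(s) = 0$, so the cut inequality \eqref{eqn:cut1} is satisfied for each $j = 1, \dots, k$. Together with the bound constraints $\underline{s} \leq s \leq \overline{s}$ that already hold on $\cS$, this proves $s \in \cS^k$. The main obstacle, and the step worth articulating carefully, is the justification that the dual multipliers from iteration $j$ give a valid affine minorant of $Z^{u^j}(\cdot)$, which hinges on the $s$-independence of the dual feasible set and strong duality for the SOCP subproblem at iteration $j$; beyond that, the argument is a direct application of weak duality in the spirit of generalized Benders decomposition.
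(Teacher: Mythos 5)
Your proposal is correct and follows essentially the same route as the paper: the paper's one-line appeal to ``a theorem of the alternative for an SOCP model'' to conclude \(\cS^{u^j} \subseteq \{s \mid z_{feas}^j - {\lambda^{p,j}}^\top D(s^p - \hat{s}^{p,j}) - {\lambda^{q,j}}^\top D(s^q - \hat{s}^{q,j}) \leq 0\}\) is exactly the weak-duality/affine-minorant argument you spell out, after which both proofs intersect over the cut-generating realizations \(u^j\). Your version is simply more explicit (and note that validity of the cut needs only weak duality plus the \(s\)-independence of the dual feasible set; the strong-duality step is not strictly required for this lemma).
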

	\begin{proof}[Proof of Lemma~\ref{lemma:subset}]
		At iteration \(k\) of Algorithm~\ref{alg:Cut}, the solution to model~\eqref{prob:dualSubIBudget} specifies an element of \(\cU\) via binary variables, and we denote this element \(u^k \in \cU \). By a theorem of the alternative for an SOCP model \citep[see][Section~5.8]{boyd2004convex}, any inequality of the form~\eqref{eqn:cut1} satisfies \(\cS^{u^k} \subseteq \{s \mid z_{feas}^k - {\lambda^{p,k}}^\top D (s^p - \hat{s}^{p,k}) - {\lambda^{q,k}}^\top D (s^q - \hat{s}^{q,k}) \leq 0\}\). Since \(\cS = {\cap_{u \in \cU} \cS^u} \cap \{s \mid \underline{s} \leq s \leq \overline{s}\}\), and each cut is produced for a specific \(u\), we have that \(\cS \subseteq \cS^k \) for all \(k\). 
	\end{proof}

	\begin{theorem} \label{thm:converge}
		Let Assumptions~\ref{assumption1}-\ref{assumption3} hold, and assume that model~\eqref{prob:rcACOPF} is feasible. Let \(\epsilon = 0\), and let \(\{\hat{s}^k\}\) denote the sequence of iterates produced by Algorithm~\ref{alg:Cut}. Every limit point of this sequence solves model~\eqref{prob:rcACOPF}.
	\end{theorem}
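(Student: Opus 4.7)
The plan is to decompose the argument into (i) showing that any limit point $\hat{s}^*$ of the sequence $\{\hat{s}^k\}$ lies in the true feasible set $\cS$, and (ii) showing that $\hat{s}^*$ attains the optimal value $v^*$ of model~\eqref{prob:rcACOPF}. Because the iterates all satisfy the simple bounds \eqref{eqn:csConstr}, they lie in the compact box $[\underline{s},\overline{s}]$, so limit points exist. Two basic facts will drive the argument: by Lemma~\ref{lemma:subset} the master's feasible set $\cS^k$ contains $\cS$ at every iteration (so in particular $(M)$ stays feasible and $V^{*,k} = c(\hat{s}^k) \leq v^*$), and by Lemma~\ref{lemma:convexity} the function $Z(\cdot)$ with $Z(s) = z_{feas}$ is convex on all of $\Re^{2|\cG|}$, hence continuous.

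For step (i), I would fix a subsequence $\hat{s}^{k_j} \to \hat{s}^*$. For any $j < l$, the cut produced at iteration $k_j$ is present in $(M)$ when $\hat{s}^{k_l}$ is computed, so
\[
z_{feas}^{k_j} \leq {\lambda^{p,k_j}}^\top D(\hat{s}^{p,k_l} - \hat{s}^{p,k_j}) + {\lambda^{q,k_j}}^\top D(\hat{s}^{q,k_l} - \hat{s}^{q,k_j}).
\]
The crucial observation is that the dual components appearing in the cut are uniformly bounded: constraints \eqref{eqn:budgetLambdaP}--\eqref{eqn:budgetLambdaQ} imply $\|\lambda^{p,k_j}\|_\infty, \|\lambda^{q,k_j}\|_\infty \leq 1$, while $D$ is fixed. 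Letting $l \to \infty$ (with $j$ fixed) and then $j \to \infty$, the right-hand side vanishes because $\hat{s}^{k_l}, \hat{s}^{k_j} \to \hat{s}^*$, so $\limsup_j z_{feas}^{k_j} \leq 0$. Since $z_{feas}^{k_j} = Z(\hat{s}^{k_j}) \geq 0$ (the artificial variables are nonnegative), we conclude $z_{feas}^{k_j} \to 0$. By continuity of $Z$ from Lemma~\ref{lemma:convexity} (and Assumption~\ref{assumption2}, which guarantees $Z$ is finite everywhere, so continuity follows from convexity), $Z(\hat{s}^*) = 0$. This means every subproblem $(S^u)$ has optimal value zero at $\hat{s}^*$, which is precisely $\hat{s}^* \in \bigcap_{u \in \cU} \cS^u$, and combined with the simple bounds, $\hat{s}^* \in \cS$.

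For step (ii), feasibility of model~\eqref{prob:rcACOPF} and Lemma~\ref{lemma:subset} yield $V^{*,k_j} = c(\hat{s}^{k_j}) \leq v^*$. By continuity of $c$ on the bounded domain (Assumption~\ref{assumption1}), $c(\hat{s}^*) = \lim_j c(\hat{s}^{k_j}) \leq v^*$. Since $\hat{s}^* \in \cS$ by step (i), we also have $c(\hat{s}^*) \geq v^*$, so $c(\hat{s}^*) = v^*$ and $\hat{s}^*$ is optimal.

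The main obstacle is step (i)'s argument that $z_{feas}^{k_j} \to 0$. Without uniform control on the dual multipliers attached to the cut, one could imagine that cuts become arbitrarily steep and fail to force $z_{feas}$ toward zero at limit points. The reason the argument succeeds here is precisely the a~priori $\ell_\infty$-bound $\|\lambda^{p,k}\|_\infty, \|\lambda^{q,k}\|_\infty \leq 1$ baked into $(SDI)$ via constraints~\eqref{eqn:budgetLambdaP}--\eqref{eqn:budgetLambdaQ}, which is inherited from the $\ell_1$-norm penalty on the artificial variables $(l^{p,\pm}, l^{q,\pm})$ in $(S^u)$. A minor point to verify is continuity of $Z$ at $\hat{s}^*$: since Assumption~\ref{assumption2} makes $(S^u)$ feasible for every $u \in \cU$ and every $s$, $Z$ is a finite-valued convex function on $\Re^{2|\cG|}$, hence continuous everywhere.
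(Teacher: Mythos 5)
Your proof is correct and follows essentially the same route as the paper's: the same cut inequality combined with the a priori bound on $\lambda^{p},\lambda^{q}$ from~\eqref{eqn:budgetLambdaP}--\eqref{eqn:budgetLambdaQ} forces the violation to vanish at limit points, and the same relaxation argument via Lemma~\ref{lemma:subset} gives optimality. The only (legitimate) streamlining is that you invoke continuity of $Z$ directly from Lemma~\ref{lemma:convexity}, whereas the paper first extracts a further subsequence along which a single worst-case extreme point $\hat{u}\in\cU^E$ recurs and argues through $Z^{\hat{u}}$; you also leave the finite-termination case implicit, which the paper treats separately in one line.
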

	\begin{proof}[Proof of Theorem~\ref{thm:converge}]
		If Algorithm~\ref{alg:Cut} terminates in a finite number of iterations, then it does so with \(z^k_{feas} = 0\). In this case, the associated solution solves model~\eqref{prob:rcACOPF} by Lemma~\ref{lemma:subset} because the master problem is a relaxation, and the proof is complete. Now assume that the algorithm produces an infinite sequence of iterates, and let \(\cS\) be defined as in~\eqref{eqn:inSet}. Set \(\cS\) is compact because it is a closed subset of \(\underline{s} \leq s \leq \overline{s}\). So, \(\{\hat{s}^k\}\) has at least one limit point in \(\cS\), which we denote as \(\hat{s}\), and we let \(\mathcal{K}\) index a corresponding convergent subsequence;  i.e., \(\lim_{k \in \mathcal{K}, k \to \infty} \hat{s}^k = \hat{s}\).
		
		Solving model~\eqref{prob:dualSubIBudget} yields a \(u^k \in \cU^E \subseteq \cU\), which represents a most violated element of the uncertainty set. Because these solutions are in \(\cU^E\), there are a finite number of possibilities. So, there is at least one \(\hat{u} \in \cU^E\) that occurs infinitely many times among the iterations indexed by \(\mathcal{K}\), and we let \(\mathcal{K}' \subset \mathcal{K}\) denote such a further subsequence. Let \(k,k' \in \mathcal{K}' \) with \(k' > k\). Then we have
		\begin{align}
		z_{feas}^k &\leq {\lambda^{p,k}}^\top D (\hat{s}^{p,k'} - \hat{s}^{p,k}) + {\lambda^{q,k}}^\top D (\hat{s}^{q,k'} - \hat{s}^{q,k}) \nonumber\\
		&\leq \| {\lambda^{p,k}}\| \| D (\hat{s}^{p,k'} - \hat{s}^{p,k})\| + \|{\lambda^{q,k}}\| \|D (\hat{s}^{q,k'}- \hat{s}^{q,k})\|.
		\end{align}
		
		From constraints \eqref{eqn:budgetLambdaP} and \eqref{eqn:budgetLambdaQ}, we know \(\|\lambda^p \|\) and \(\|\lambda^q \|\) are bounded. Both \(\hat{s}^k\) and \(\hat{s}^{k'}\) converge to \(\hat{s}\) so 
		\begin{equation}
		\lim_{\substack{k \to \infty\\k' \to \infty\\k' > k\\k',k \in \mathcal{K}'}}  \left(\| {\lambda^{p,k}}\| \|(\hat{s}^{p,k'} - \hat{s}^{p,k})\| + \|{\lambda^{q,k}}\| \|(\hat{s}^{q,k'}- \hat{s}^{q,k})\| \right) = 0. \label{eqn:limitsubzero}
		\end{equation}
		
		We let \(Z^u(s)\) denote the optimal value of model~\eqref{prob:sub} for a specific \(u \in \cU\), which is equivalent to the inner minimization problem of~\eqref{prob:primalSub}, and we let \(Z(s)\) denote the optimal value of~\eqref{prob:primalSub}, where the right-hand side is parametrized by \(s\) rather than \(\hat{s}\). Thus, we have:
		\begin{equation}
		\lim_{\substack{k \to \infty \\ k \in \mathcal{K}'}} Z^{\hat{u}}(s^k) = Z^{\hat{u}}(\hat{s}) = Z(\hat{s}) \leq 0, \label{eqn:limitzero}
		\end{equation}
		where the first equality holds by continuity of \(Z^{\hat{u}}(\cdot)\) from Lemma~\ref{lemma:convexity}, and the second equality holds because \(\hat{u}\) corresponds to a most violated point of \(\cU\). Thus, \(\hat{s}\) is feasible to model~\eqref{prob:rcACOPF}. Let $z^*$ denote the optimal value of model~\eqref{prob:rcACOPF}. Then \(c(\hat{s}) \geq z^*\).
		
		By Lemma~\ref{lemma:subset}, we have \(c(\hat{s}^k) \leq z^*,\ \forall k \in \mathcal{K}\), and hence by Assumption~\ref{assumption1}, we have that
		\begin{equation}
		\lim_{\substack{k \to \infty \\ k \in \mathcal{K}}} c(\hat{s}^k) = c(\hat{s}) \leq z^*. \label{eqn:limitLB}
		\end{equation}
		Thus, \(\hat{s}\) solves model~\eqref{prob:rcACOPF}.
	\end{proof}

	Finally we show that when Algorithm~\ref{alg:Cut} terminates, it returns an \(\epsilon\)-feasible solution to model~\eqref{prob:rcACOPF} in a finite number of iterations, if model~\eqref{prob:rcACOPF} is feasible.
	\begin{theorem} \label{thm:finite}
		Let Assumptions~\ref{assumption1}-\ref{assumption3} hold, and assume that model~\eqref{prob:rcACOPF} is feasible. Let \(\epsilon > 0\). Algorithm~\ref{alg:Cut} terminates with an \(\epsilon\)-feasible solution in a finite number of iterations.
	\end{theorem}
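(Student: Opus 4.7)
The plan is to argue finite termination by contradiction, reusing the compactness-plus-finite-extreme-points machinery from the proof of Theorem~\ref{thm:converge} but now exploiting the strict positive gap $\epsilon > 0$, and then to verify that the terminal iterate satisfies the definition of $\epsilon$-feasibility.

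First I would suppose, for contradiction, that the algorithm never exits the while-loop, so that $z_{feas}^k > \epsilon$ for every $k$. The iterates $\{\hat{s}^k\}$ lie in the compact box $\{s : \underline{s} \leq s \leq \overline{s}\}$, so Bolzano-Weierstrass produces a convergent subsequence. Because each call to $(SDI)$ returns an extreme point of $\cU$, and by Assumption~\ref{assumption3} together with the integrality in~\eqref{prob:dualSubIBudget} the set $\cU^E$ is finite, a pigeonhole argument yields a further subsequence $\mathcal{K}'$ along which the same $\hat{u} \in \cU^E$ is selected. For any $k, k' \in \mathcal{K}'$ with $k' > k$, feasibility of $\hat{s}^{k'}$ in $(M)$ forces the cut generated at iteration $k$ to hold, giving
$$ z_{feas}^k \;\leq\; {\lambda^{p,k}}^\top D(\hat{s}^{p,k'} - \hat{s}^{p,k}) + {\lambda^{q,k}}^\top D(\hat{s}^{q,k'} - \hat{s}^{q,k}). $$
By Cauchy-Schwarz, the uniform bounds on $\|\lambda^{p,k}\|$ and $\|\lambda^{q,k}\|$ supplied by~\eqref{eqn:budgetLambdaP}-\eqref{eqn:budgetLambdaQ}, and the fact that $\hat{s}^{k'} - \hat{s}^k \to 0$ along $\mathcal{K}'$, the right-hand side tends to zero as $k, k' \to \infty$ within $\mathcal{K}'$. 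This contradicts $z_{feas}^k > \epsilon > 0$, so the algorithm terminates at some iteration $\bar{k}$ with $z_{feas}^{\bar{k}} \leq \epsilon$.

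Next I would verify that the returned $\hat{s}^{\bar{k}}$ meets the $\epsilon$-feasibility condition. Fix any $u \in \cU$. Since $z_{feas}^{\bar{k}}$ is the maximum of the value of $(S^u)$ over $u \in \cU$, the subproblem $(S^u)$ at $\hat{s} = \hat{s}^{\bar{k}}$ has optimal value at most $\epsilon$, with a minimizer $(x, l^{p,+}, l^{p,-}, l^{q,+}, l^{q,-})$ satisfying $1^\top(l^{p,+} + l^{p,-} + l^{q,+} + l^{q,-}) \leq \epsilon$. I would then construct $\tilde{s}^u$ by absorbing the slack into generator coordinates, so that $D\tilde{s}^{u,p} - D\hat{s}^{p,\bar{k}} = l^{p,-} - l^{p,+}$ and analogously for $q$, making the same $x$ feasible to the slack-free system at $\tilde{s}^u$. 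Routing each bus-level residual into a single local generator keeps $\|\tilde{s}^u - \hat{s}^{\bar{k}}\|_1 \leq 1^\top(l^{p,+} + l^{p,-} + l^{q,+} + l^{q,-}) \leq \epsilon$, which is exactly the bound required by the definition.

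The main obstacle will be this absorption step: cleanly converting the aggregate slack bound $1^\top l \leq \epsilon$ into an $\ell_1$ perturbation of $s$ of comparable magnitude. This hinges on the row structure of the generator-incidence matrix $D$---each bus carrying nonzero slack must be matched to a generator coordinate whose adjustment fits within the $\ell_1$ budget. The subsequence-and-cut portion of the argument is essentially a re-run of Theorem~\ref{thm:converge}, with the crucial change that the strict positivity of $\epsilon$ upgrades the asymptotic statement~\eqref{eqn:limitsubzero} into a finite-iteration contradiction.
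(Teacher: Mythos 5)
Your proposal follows essentially the same route as the paper: a compactness-plus-pigeonhole contradiction for finite termination (the paper phrases the final step via continuity of \(Z(\cdot)\) at the limit point rather than re-deriving the cut inequality, but since \(z^k_{feas} = Z(\hat{s}^k)\) these are the same argument), followed by absorbing the optimal slacks \(l\) of the subproblem into \(s\) to exhibit the \(\hat{s} \in \mathcal{B}_\epsilon(\hat{s}^{\bar{k}})\) required by the definition of \(\epsilon\)-feasibility.

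One piece is missing: the while-loop has a second exit, namely step~11 of Algorithm~\ref{alg:Cut}, where the master \((M)\) is declared infeasible and the algorithm stops \emph{without} returning an \(\epsilon\)-feasible solution. Your contradiction argument only rules out running forever; it does not rule out this early exit. The paper closes this by invoking Lemma~\ref{lemma:subset}: since model~\eqref{prob:rcACOPF} is feasible, \(\cS \neq \emptyset\), and \(\cS \subseteq \cS^k\) for all \(k\), so \((M)\) remains feasible at every iteration and the infeasibility branch is never taken. You should add this one line. As for the absorption step you flag as the main obstacle, your caution is warranted---the slacks \(l\) are bus-indexed while \(s\) is generator-indexed, so the identity \(\|\tilde{s}^u - \hat{s}^{\bar{k}}\|_1 = 1^\top(l^{p,+}+l^{p,-}+l^{q,+}+l^{q,-})\) requires routing each bus residual through the map \(D\); the paper's own proof writes \(s^{p,u} = \hat{s}^{p,k} - l^{p,+,u} + l^{p,-,u}\) without addressing this mismatch, so your treatment is, if anything, the more careful of the two.
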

	\begin{proof}[Proof of Theorem~\ref{thm:finite}]
		Model~\eqref{prob:rcACOPF} is feasible, and hence \(\cS \neq \emptyset\). By Lemma~\ref{lemma:subset} \(\cS \subseteq \cS^k\), which is the feasible region of model~\eqref{prob:master} for all \(k = 1,2, \dots\). Therefore, Algorithm~\ref{alg:Cut} does not terminate with a status of infeasibility because model~\eqref{prob:master} is feasible for all \(k = 1,2,\dots\).
		
		We first prove by contradiction that the algorithm terminates in a finite number of iterations. Here \(\epsilon\) only determines the stopping criterion but does not affect the cuts generated in Algorithm~\ref{alg:Cut}. Suppose Algorithm~\ref{alg:Cut} does not terminate after a finite number of iterations. Thus, we have an infinite sequence of solutions \(\{\hat{s}^k\}\), and \(Z(\hat{s}^k) > \epsilon,\ \forall k = 1,2,\dots \). By the proof of Theorem~\ref{thm:converge}, every convergent subsequence of \(\{\hat{s}^k\}\) indexed by \(\mathcal{K}\) with a limit point \(\hat{s}\) satisfies \(Z(\hat{s}) \leq 0\). We have \(\lim_{k \in \mathcal{K}, k \to \infty} Z(s^k)= Z(\hat{s}) \leq 0\) because \(Z(\cdot)\) is convex and hence continuous. However, this contradicts that \(Z(s^k) > \epsilon > 0.\ \forall k = 1,2,\dots \). Therefore, the algorithm terminates in a finite number of iterations.
		
		If Algorithm~\ref{alg:Cut} terminates in iteration \(k < \infty\), then \(z^k_{feas} \leq \epsilon\). By hypothesis, model~\eqref{prob:primalSub} is feasible and has a finite optimal value. Hence, by strong duality, the optimal value of model~\eqref{prob:dualSubIBudget} is equal to that of model~\eqref{prob:primalSub} and is at most \(\epsilon\). Let \((\hat{s}^{p,k},\hat{s}^{q,k})\) denote the input of Algorithm~\ref{alg:Cut} (step 12) to model~\eqref{prob:dualSubIBudget}, or equivalently, to model~\eqref{prob:primalSub}. For each \(u \in \cU\), let \((x^u,l^{p,+,u},l^{p,-,u},l^{q,+,u},l^{q,-,u})\) denote the optimal solution of the inner minimization problem defined in~\eqref{prob:primalSub}. For each \(u \in \cU\), let \(s^{p,u} = \hat{s}^{p,k} - l^{p,+,u} + l^{p,-,u}\) and \(s^{q,u} = \hat{s}^{q,k} - l^{q,+,u} + l^{p,-,u}\). From the formulation of model~\eqref{prob:primalSub} we know that \((s^{p,u},s^{q,u})\) yields \(\left\{x \mid \eqref{eqn:linearCon}\mbox{-}\eqref{eqn:qCon} \right\} \neq \emptyset\), and \(\| s^u - \hat{s}^k \|_1 = 1^\top(l^{p,+} + l^{p,-} + l^{q,+} + l^{q,-}) = z^k_{feas} \leq \epsilon\); i.e., \(\hat{s}\) is an \(\epsilon\)-feasible solution.
	\end{proof}
	\subsection{Improving Convergence of Algorithm~\ref{alg:Cut}}
	\label{subsec:scenAppend}	
	It is well known that cutting-plane algorithms can converge slowly; see, e.g., \citet{nemirovskii1983problem}. This can occur because master problem solutions differ dramatically from one iteration to the next. There are multiple ways to improve such algorithms ranging from trust-region methods to level-set methods to bundle methods. We studied a bundle method by adding a quadratic regularization term to the master's objective function. This approach improved computational performance, but did not facilitate solving our largest test cases. The method is detailed in Appendix~\ref{appen:regu}.
	
	Therefore we considered a second method in which we identify extreme points, \(u \in \cU^E\), for which \(\cS^u\) characterizes important parts of the boundary of \(\cS\). In a Benders' decomposition algorithm for stochastic integer programs, \citet{crainic2016partial} include a subset of the scenario subproblems in the master problem in order to reduce generation of feasibility cuts. We employ a similar approach, but we discover the requisite elements \(u\) to be added to the master problem in the cutting-plane process instead of generating them upfront. (\citealp{lorca2018adaptive} employ a similar idea in their Algorithm~2.) In each iteration of Algorithm~\ref{alg:Cut} we record the \(\hat{u}\) obtained by solving \((SDI)\), and if a particular \(\hat{u}\) is repeatedly generated \(n_c\) times then, instead of appending the linear cutting planes~\eqref{eqn:cut1}, we add \(\hat{u}\) to a set \(\widehat{\cU}\) and use the modified master program:
	\begin{subequations}
		\begin{align}
		\min \quad & c(s)\\
		\text{s.t.} \quad & \underline{s} \leq s \leq \overline{s} \\
		& -{\lambda^{p,k}}^\top D s^p - {\lambda^{q,k}}^\top D s^q + z^k \leq 0 \qquad \qquad \forall k = 1,2, \dots \\
		& A x^{{u}} \leq b \qquad \qquad \qquad \qquad \qquad \qquad \qquad \; \forall {u} \in \widehat{\cU} \\
		& \| B_i x^{{u}} + a_i \|_2 \leq e_i^\top x^{u} + f_i, \qquad \qquad \qquad \;\;\; \forall i = 1, \dots, m_c,\ {u} \in \widehat{\cU} \\
		& A^{op} x^{{u}} \leq \overline{o}^{p} + h^p + \zeta^+ {u}^{p,+} - \zeta^- u^{p,-} \qquad \quad \forall {u} \in \widehat{\cU}\\
		& A^{oq} x^{{u}} \leq \overline{o}^{q} + h^q + \zeta^+ {u}^{q,+} - \zeta^- u^{q,-} \qquad \quad \forall {u} \in \widehat{\cU}\\
		& A^p x^{{u}} = D s^p + u^{p,0} + {u}^{p,+} - {u}^{p,-}\qquad \qquad \forall {u} \in \widehat{\cU} \\
		& A^q x^{{u}} = D s^q + u^{q,0} + {u}^{q,+} - {u}^{q,-} \qquad \qquad \forall {u} \in \widehat{\cU}.
		\end{align}
	\end{subequations} 

\section{Experimental Results}
\label{sec:exp}
In this section, we describe computational results to help understand the nature of our robust convex optimization problem and the performance of Algorithm~\ref{alg:Cut}, along with enhancements to that algorithm. The optimal value of model~\eqref{prob:rcACOPF} is a lower bound on that of the nonconvex model~\eqref{prob:rACOPF}. It is important to assess the tightness of this lower bound, while also answering the question of whether the robust solution generated by Algorithm~\ref{alg:Cut} is feasible for model~\eqref{prob:rACOPF}, at least for a selection of points from the uncertainty set. Doing so helps assess the robustness of our solution.

Throughout this section we use Algorithm~\ref{alg:Cut} with the scenario-appending technique of Section~\ref{subsec:scenAppend}. We use test cases from NESTA, the NICTA Energy System Test Case Archive \citep{coffrin2014nesta}. We select IEEE cases with 5, 9, 14, 118, and 300 buses and the Polish system winter peak cases with 2383 and 2746 buses. We refer to these by the number of buses (e.g., Case 5). All tests are run on a server with 20 Intel Xeon cores at 3.1 GHz and 256 GB of RAM. All models are constructed using version 0.18.0 of the JuMP package \citep{DunningHuchetteLubin2017} on the Julia platform. The mixed integer second-order cone programs (MISOCPs) and SOCPs are solved by Gurobi 7.52 \citep{gurobi2016}, where we set the option ``NumericFocus" to 3 for Case 2383 and Case 2746. All nonconvex optimization problems are solved by Ipopt 3.12.1 \citep{wachter2006implementation}, with the linear solver MA27. Prior to solving model~\eqref{prob:rcACOPF}, we run a bound-tightening process to improve the quality of the QC relaxation. This process is detailed in Appendix~\ref{appen:boundTightening} and involves solving a sequence of SOCPs. The appendix includes the requisite computational effort, which can be significant for cases with many buses. The results that we report in this section do {\em not} include time to carry out the bound-tightening process, which is performed once for each case as a preprocessing step.

We first introduce some modeling specifics used to build our test instances. Then we detail the tests to characterize our robust convex relaxation of the ACOPF problem and the computational performance of our algorithms.

\subsection{Modeling and Implementation Details}

\subsubsection{Uncertainty Set and Recourse Bounds} \label{subsec:alphSel}
We first specify construction of the uncertainty set, $\cU$, which includes both generation and demand uncertainty. Then we discuss two specific parameter selection schemes used in our tests. For each bus, \(i \in \cN\), nominal values of uncertain demand, \((d^p_i, d^q_i) \geq 0\), are known from the NESTA datasets. We model uncertain renewable generation at a subset of buses, \(\cN_G\), where selection of this subset is detailed in Appendix~\ref{appen:facLoc}. The nominal active generation from renewables is given by \(h^p_i = 0.05 {|\cN_G|}^{-1} \sum_{i \in \cN} d_i^p\) for \(i \in \cN_G\) and is zero otherwise. We fix the constant power factor at \(\gamma = 98\%\) and calculate \(h_i^q = \sqrt{\frac{1}{\gamma^2} - 1} \, h_i^p\). We assume the maximum allowable deviation of both generation and demand is a percentage of their nominal values, and the positive and negative deviation can differ. Therefore, we can parametrize the deviation by a set of percentages \((\alpha^{h,+},\alpha^{h,-},\alpha^{d,+},\alpha^{d,-})\), with \(\alpha^{h,+},\alpha^{h,-},\alpha^{d,+},\alpha^{d,-} \in \left[0,1\right]\):
\begin{subequations}
	\label{eqn:alphapmdef}
	\begin{align}
	&\overline{u}^p_i = (1+\alpha^{h,+})h_i^p - (1-\alpha^{d,+})d_i^p & \overline{u}^q_i = (1+\alpha^{h,+})h_i^q - (1-\alpha^{d,+})d_i^q \\
	&\underline{u}^p_i = (1-\alpha^{h,-})h_i^p - (1+\alpha^{d,-})d_i^p & \underline{u}^q_i = (1-\alpha^{h,-})h_i^q - (1+\alpha^{d,-})d_i^q .
	\end{align}
\end{subequations}
Handling demand spikes, or more generally, the right-skewed nature of electrical load~\citep[e.g.,][]{maisano2016lognormal,singh2010statistical} can pose challenges for system operators, and so we model asymmetric uncertainty sets. In particular, at bus \(i\), we set \(\alpha^{d,-} = 5 \alpha^{d,+}\) to focus on large negative deviation. We consider symmetric generation uncertainty, i.e., \(\alpha^{h,+} = \alpha^{h,-}\), which is commonly used for generation uncertainty~\citep[e.g.,][]{jiang2012robust,conejo2018adaptive}.

We assume that at each bus the upper bounds on recourse decisions in constraints~\eqref{eqn:oppbounds}-\eqref{eqn:oqmbounds} have the same ratio \(\beta\) to their corresponding maximum generation level; i.e.,
\begin{align}
& \overline{o}^p_i = \beta \sum_{g \in \cG_i} \overline{s}_g^p & \overline{o}^q_i = \beta \sum_{g \in \cG_i} \overline{s}_g^q \qquad \qquad \forall i \in \cN. \label{eqn:betadef}
\end{align}
The curtailment coefficients \(\zeta^+\) and \(\zeta^-\) can be derived under the current setup as follows:
\begin{align}
& \zeta_i^+ = \frac{\alpha^{h,+} h^p_i}{\alpha^{h,+} h^p_i + \alpha^{d,+}d^p_i} & \zeta_i^- = \frac{\alpha^{h,-} h^p_i}{\alpha^{h,-} h^p_i + \alpha^{d,-}d^p_i} \qquad \qquad  \forall i \in \cN .
\end{align}

Power systems are distinguished by numerous characteristics. The use of \(\alpha\) and \(\beta\) sketched above provides a relatively simple  way of parameterizing the tests that follow.

\subsubsection{Measure of Infeasibility} \label{subsec:infdef}
We measure infeasibility of a set point as follows. Given an \((\hat{s}^p,\hat{s}^q)\), it is possible that the nonconvex model~\eqref{prob:rACOPF} is infeasible for one or more values of \((u^{p,+},u^{p,-},u^{q,+},u^{q,-}) \in \cU\). Therefore, we modify the model to allow for additional flexibility in satisfying constraints~\eqref{eqn:balanceP} and~\eqref{eqn:balanceQ} through variables \(l_i^{p,+},l_i^{p,-},l_i^{q,+},l_i^{q,-}\). For a given \(u \in \cU\) we measure the magnitude of infeasibility by:
\begin{subequations}
	\label{prob:infeas}
	\begin{align}
	\tII(\hat{s},u) = \min \quad & \sum_{i \in \cN} \frac{\left(l^{p,+}_i + l^{p,-}_i + l^{q,+}_i + l^{q,-}_i\right)}{\sum_{i \in \cN} \left(|d^p_i| + |d^q_i|\right)} \\
	\text{s.t.} \quad & \text{constraints }\eqref{eqn:spConstr1}\mbox{-}\eqref{eqn:angCons} \nonumber \\
	& \sum_{k = (i,j,n) \in \cA} P_k + g_i^{sh} v_i^2+ o_i^{p,+} - o_i^{p,-} + l_i^{p,+} - l_i^{p,-}  \nonumber\\
	& \qquad = \sum_{g \in \cG_i} \hat{s}_g^p + (u_i^{p,0} + u_i^{p,+} - u_i^{p,-}) \qquad \qquad  \forall i \in \cN \label{eqn:balancePI}\\
	& \sum_{k = (i,j,n) \in \cA} Q_k - b_i^{sh} v_i^2 + o_i^{q,+} - o_i^{q,-} + l_i^{q,+} - l_i^{q,-} \nonumber\\
	& \qquad = \sum_{g \in \cG_i} \hat{s}_g^q + (u_i^{q,0} + u_i^{q,+} - u_i^{q,-}) \qquad \qquad \forall i \in \cN \label{eqn:balanceQI}\\
	& \text{constraints }\eqref{eqn:oppbounds}\mbox{-}\eqref{eqn:oqmbounds} \nonumber \\
	& o_i^{p,+}, o_i^{p,-}, o_i^{q,+}, o_i^{q,-} \geq 0 \qquad \qquad \qquad \qquad \quad \; \; \forall i \in \cN\\
	& l_i^{p,+}, l_i^{p,-}, l_i^{q,+}, l_i^{q,-} \geq 0 \qquad \qquad  \qquad \qquad \qquad \forall i \in \cN.
	\end{align}
\end{subequations}
The infeasibility measure, \(\tII(s,u)\), given by nonconvex model~\eqref{prob:infeas} yields a minimum normalized adjustment to the right-hand side of constraints~\eqref{eqn:balancePI} and~\eqref{eqn:balanceQI} needed to construct a feasible recourse solution for a given \(u \in \cU\). 

\subsubsection{Solving Nonconvex Problems} \label{subsec:nonconvex}
In this section, we discuss two nonconvex problems that we solve. First, we solve model~\eqref{prob:infeas} using Ipopt to measure the infeasibility of a solution~\(\hat{s}\) for a given scenario \(u\). Since we may not find a global minimum, we obtain an upper bound on the infeasibility measure for a fixed $u$. To quantify the gap on this infeasibility measure, we obtain a lower bound by solving the convex relaxation of model~\eqref{prob:infeas}, which is model~\eqref{prob:sub} under the same value of \(u\). We denote this lower bound by \(\underline{\tII}(\hat{s},u)\). 

The second nonconvex problem that we face is the robust ACOPF model. Model~\eqref{prob:rcACOPF}'s optimal value provides a lower bound for that of its nonconvex counterpart, i.e., model~\eqref{prob:rACOPF}. However, it is difficult to obtain a tight upper bound on model~\eqref{prob:rACOPF}'s optimal value~\citep[e.g.,][]{nguyen2019inner} because of the challenge in guaranteeing feasibility: for model~\eqref{prob:rcACOPF}, we can restrict attention to \(\cU^E\), but in the nonconvex setting, the worst-case violation need not be at an extreme point of \(\cU\). Ensuring feasibility of a candidate solution, $\hat{s}$, to model~\eqref{prob:rACOPF} requires checking every scenario, \(u \in \cU\), or equivalently, solving the nonconvex variant of model~\eqref{prob:primalSub}, which is difficult. Instead, we do the following. First, we compute a candidate solution, which we denote $s^*$, by finding a local minimum of model~\eqref{prob:rACOPF} in which $\cU$ is replaced by $\cU^E$. (We do not solve this model directly, but rather iteratively identify violated scenarios and add them to the master program.) While this is not guaranteed to be an upper bound because we cannot ensure feasibility with respect to all $u \in \cU$, we use this as a proxy for an upper bound. In addition, we compute an upper bound on \(\tII(s^*,u)\) by finding a local minimum to model~\eqref{prob:infeas}, for a large number of randomly selected elements from the set \(\cU\). 

\subsection{Descriptions of Tests and Results}	
We explore the relationship between \(\alpha^{d,-}\) and \(\beta\) in equations~\eqref{eqn:alphapmdef} and~\eqref{eqn:betadef}. For fixed values of the other $\alpha$-parameters, \(\beta\), and \(\Gamma\), we determine \(\alpha^{d,-}_{\max}\), the largest value of \(\alpha^{d,-}\) for which model~\eqref{prob:rcACOPF} is feasible in order to help design the uncertainty set for our test cases. Figure~\ref{fig:test1c} suggests the resulting \(\alpha^{d,-}_{\max}\) is concave in~\(\beta\).
\begin{figure}[H]
	\centering
	\includegraphics[width=\textwidth]{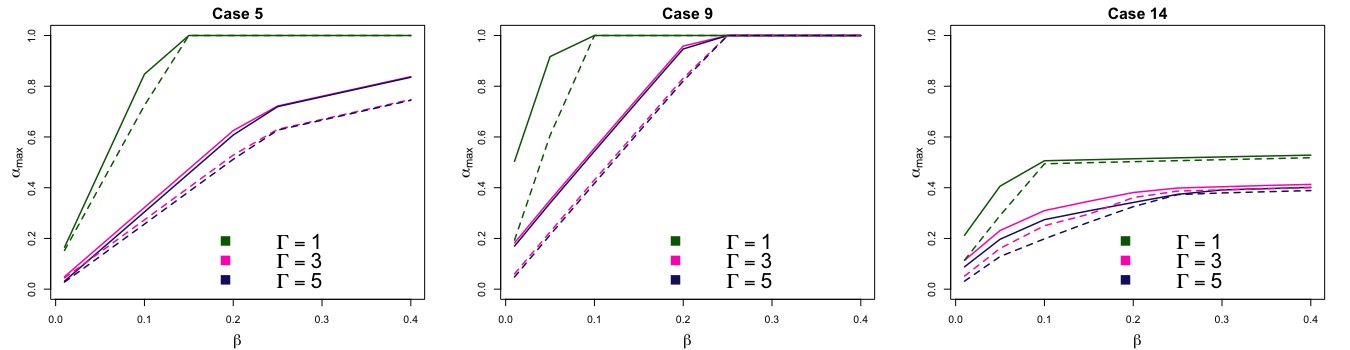}
	\caption{The plots specify \(\alpha^{d,-}_{\max}\) as a function of \(\beta\) for various values of \(\Gamma\); see equations~\eqref{eqn:alphapmdef} and~\eqref{eqn:betadef}. Here, \(\alpha^{d,-}_{\max}\) is the largest value for which model~\eqref{prob:rcACOPF} (solid line) / model~\eqref{prob:rACOPF} (dashed line) is feasible. }
	\label{fig:test1c}
\end{figure}
To understand this concavity, consider model~\eqref{prob:rcACOPF}, where we restrict \(u \in \cU^E\), and replace the objective function by \(0\). Let \(\Lambda\) denote the model's dual feasible region, which is the intersection of polyhedral and second-order cones. Writing this dual compactly we have: 
\begin{equation}
\max_{\lambda \in \Lambda} \quad \alpha^{d,-} (a^\top \lambda) + \beta(b^\top \lambda) + c^\top \lambda,
\end{equation}
where terms \(a\) and \(b\) come from~\eqref{eqn:cbalanceP} and~\eqref{eqn:cbalanceQ}. For the primal to be feasible, we must have
\begin{equation}
\alpha^{d,-} (a^\top \lambda) + \beta(b^\top \lambda) + c^\top \lambda \leq 0 \qquad \forall \lambda \in \Lambda.
\end{equation}
This condition implies: 
\begin{equation} \label{eqn:alphamax}
\alpha^{d,-}_{\max}  =	\inf_{\substack{\lambda \in \Lambda \\ a^\top \lambda > 0}}
\frac{-\beta(b^\top \lambda) - c^\top \lambda}{a^\top \lambda},
\end{equation}
which is consistent with the concave functions for model~\eqref{prob:rcACOPF} in Figure~\ref{fig:test1c} (solid lines).

The dashed lines in Figure~\ref{fig:test1c} repeat the $\alpha^{d,-}_{\max}$--$\beta$ relationship for the nonconvex ACOPF model~\eqref{prob:rACOPF}. Since model~\eqref{prob:rcACOPF} is a relaxation of model~\eqref{prob:rACOPF}, we see the former model can accommodate a slightly larger value of \(\alpha^{d,-}_{\max}\) for a given \(\beta\). While the gaps between solid and dashed lines in Figure~\ref{fig:test1c} give some insight to the difference between models~\eqref{prob:rACOPF} and~\eqref{prob:rcACOPF}, this relationship does not involve the objective function, \(c(s)\). Hence a small or large gap in Figure~\ref{fig:test1c} does not necessarily correspond to a small or large optimality gap. For example, in results that we report below, Case 5 has the largest optimality gap while the difference between the curves for Case 5 in Figure~\ref{fig:test1c} is modest.

All tests use \(\alpha^{h,+}  = \alpha^{h,-} = 1\), and \(\beta = 0.05\); \(\alpha^{d,-} = 5 \alpha^{d,+}\) is set at the case-specific \(\alpha^{d,-}_{\max}\) for \(\beta = 0.05\), and the stopping tolerance is \(\epsilon = 10^{-4} \left(\sum_{i \in \cN} |d_i^p| + |d_i^q| \right)\) for all tests. We first assess three properties of our robust ACOPF problem: the quality of the lower bound generated by model~\eqref{prob:rcACOPF}, the robustness of the solution to model~\eqref{prob:rcACOPF} in the nonconvex setting, and the performance of this robust solution relative to a deterministic alternative. 

For each budget parameter \(\Gamma\), a robust convex \(\epsilon\)-feasible solution, \(\hat{s}\), is first obtained by executing Algorithm~\ref{alg:Cut} with the scenario-appending technique from Section~\ref{subsec:scenAppend}. The cost associated with this solution, \(c(\hat{s})\), provides a lower bound for model~\eqref{prob:rACOPF}. We let $C_R$ denote the lower bound obtained by solving the robust convex relaxation, and we let $C_N$ denote our proxy for the upper bound obtained by finding a local minimum of model~\eqref{prob:rACOPF} with \(\cU\) replaced by $\cU^E$ as described in Section~\ref{subsec:nonconvex}. The gap is defined by \(g = 100 \times \frac{C_N - C_R}{C_N}\).

We measure the infeasibility of solution $\hat{s}$ by solving model~\eqref{prob:infeas} at every $u \in \cU^E$, and report the maximum as \(\tII(\hat{s},\hat{u})\).  We also obtain a deterministic nominal solution, \(\hat{s}^0\), and optimal value, $C_0$, by solving the deterministic QC relaxation; i.e., model~\eqref{prob:rcACOPF} with the singleton \(\cU\) defined under \(\alpha^{d,-} = \alpha^{d,+} = \alpha^{h,-} = \alpha^{h,+} = 0\), and keeping the same recourse adjustment range so that the results are comparable. We again measure the infeasibility of this nominal solution at every $u \in \cU^E$, and report the maximum as \(\tII(\hat{s}^0,\hat{u})\).

\begin{table}[h]
	\centering
	\resizebox{\linewidth}{!}{%
	\begin{tabular}{l | l l l l l l l l l}
		\hline
		Test Case & \(\Gamma\) & \(C_N\) & \(C_R\) & \(g\%\) & \(C_0\) & \(\tII(\hat{s},\hat{u})\) & \(\tII(\hat{s}^0,\hat{u})\) & \(\underline{\tII}(\hat{s}^0,\hat{u})\) \\
		\hline
		\multirow{3}{*}{Case 5} & 1 & 17101.9 & 15005.7 & 12.26 & 12651.2 & \(5.59 \times 10^{-2}\) & \(9.86 \times 10^{-2}\) & \(4.63 \times 10^{-2}\) \\
		& 3 & 19863.1 & 17715.2 & 10.81 & 12651.2 & \(5.71 \times 10^{-2}\) & \(1.43 \times 10^{-1}\) & \(1.19 \times 10^{-1}\) \\
		& 5 & 20181.0 & 17979.6 & 10.91 & 12651.2 & \(5.98 \times 10^{-2}\) & \(1.46 \times 10^{-1}\) & \(1.34 \times 10^{-1}\) \\
		\hline
		\multirow{3}{*}{Case 9} & 1 & 4751.4 & 4751.4 & 0.00 & 4059.1 & \(4.46 \times 10^{-5}\) & \(7.23 \times 10^{-2}\) & \(7.23 \times 10^{-2}\) \\
		& 3 & 5917.4 & 5917.3 & 0.00 & 4059.1 & \(4.09 \times 10^{-6}\) & \(1.81 \times 10^{-1}\) & \(1.81 \times 10^{-1}\)  \\
		& 5 & 7208.9 & 6035.6 & 16.27 & 4059.1 & \(3.92 \times 10^{-3}\) & \(1.91 \times 10^{-1}\) & \(1.91 \times 10^{-1}\) \\
		\hline
		\multirow{3}{*}{Case 14} & 1 & 233.0 & 232.9 & 0.02 & 209.0 & \(0\) & \(6.07 \times 10^{-2}\) & \(6.06 \times 10^{-2}\) \\
		& 3 & 252.9 & 252.9 & 0.02 & 209.0 & \(1.70 \times 10^{-2}\) & \(1.11 \times 10^{-1}\) & \(1.11 \times 10^{-1}\) \\
		& 5 & 260.3 & 260.2 & 0.02 & 209.0 & \(1.41 \times 10^{-2}\) & \(1.34 \times 10^{-1}\) & \(1.31 \times 10^{-1}\)  \\
		\hline
		\multirow{3}{*}{Case 30} & 1 & 187.8 & 186.7 & 0.54 & 164.7 & \(3.28 \times 10^{-3}\) & \(5.15 \times 10^{-2}\) & \(4.57 \times 10^{-2}\) \\
		& 3 & 201.6 & 200.6 & 0.50 & 164.7 & \(1.38 \times 10^{-2} \) & \(8.01 \times 10^{-2}\) & \(7.65 \times 10^{-2}\)  \\
		& 5 & 209.8 & 208.8 & 0.47 & 164.7 & \(1.01 \times 10^{-2}\) & \(9.87 \times 10^{-2}\) & \(9.65 \times 10^{-2}\) \\
		\hline
		\multirow{3}{*}{Case 118} & 1 & 3456.2 & 3426.5 & 0.86 & 3110.6 & \(1.87 \times 10^{-2}\) & \(5.21 \times 10^{-2}\) & \(4.81 \times 10^{-2}\) \\
		& 3 & 3808.3 & 3777.3 & 0.81 & 3110.6 & \(2.19 \times 10^{-2}\) & \(1.04 \times 10^{-1}\) & \(1.02 \times 10^{-1}\) \\
		& 5 & 4045.4 & 4008.1 & 0.92 & 3110.6 & \(1.59 \times 10^{-2}\) & \(1.37 \times 10^{-1}\) & \(1.35 \times 10^{-1}\) \\
		\hline
		\multirow{3}{*}{Case 300} & 1 & 15743.7 & 15116.7 & 3.98 & 13915.0 & \(2.93 \times 10^{-3}\) & \(2.56 \times 10^{-2}\) & \(2.51 \times 10^{-2}\)  \\
		& 3 & 17794.6 & 16832.5 & 5.41 & 13915.0 & \(2.66 \times 10^{-3}\) & \(5.75 \times 10^{-2}\) & \(5.69 \times 10^{-2}\)  \\
		& 5 & 18455.0 & 17522.3 & 5.05 & 13915.0 & \(2.07 \times 10^{-3}\) & \(7.96 \times 10^{-2}\) & \(7.91 \times 10^{-2}\)  \\
		\hline
		\multirow{3}{*}{Case 2383} & 1 & \(1629795.2\) & \(1611397.2\) & \(1.13\) & \(1562639.8\) & \(5.48 \times 10^{-3}\) & \(1.27 \times 10^{-2}\) & \(8.93 \times 10^{-3}\)  \\
		& 3 & \(1714285.1\) & \(1696575.3\) & \(1.03\) & \(1562639.8\) & \(5.79 \times 10^{-3}\) & \(2.71 \times 10^{-2}\) & \(2.59 \times 10^{-2}\)  \\
		& 5 & \(1789041.5\) & \(1772009.8\) & \(0.95\) & \(1562639.8\) & \(5.48 \times 10^{-3}\) & \(4.29 \times 10^{-2}\) & \(4.18 \times 10^{-2}\)  \\
		\hline
		\multirow{3}{*}{Case 2746} & 1 & \(1483630.4\) & \(1480689.5\) & \(0.20\) & \(1440355.8\) & \(9.94 \times 10^{-4}\) & \(1.51 \times 10^{-2}\) & \(1.42 \times 10^{-2}\) \\
		& 3 & \(1564579.5\) & \(1561091.4\) & \(0.22\) & \(1440355.8\) & \(1.04 \times 10^{-3}\) & \(4.20 \times 10^{-2}\) & \(4.12 \times 10^{-2}\)  \\
		& 5 & \(1623337.0\) & \(1619767.2\) & \(0.22\) & \(1440355.8\) & \(1.31 \times 10^{-3}\) & \(6.10 \times 10^{-2}\) & \(6.02 \times 10^{-2}\) \\
		\hline
	\end{tabular}}
	\caption{Robustness results of the robust convex relaxation solution and nominal solution.}
	\label{table:test12Table}
\end{table}

From Table~\ref{table:test12Table} we observe that many of the gaps between the lower bound and estimated upper bound are below 1.5\%, while the gaps for Case 5, Case 9 (\(\Gamma = 5\)), and Case 300 ($\Gamma=3$ and $5$) exceed 5\%. This result suggests that solving model~\eqref{prob:rcACOPF} can provide a tight lower bound for model~\eqref{prob:rACOPF}. The large gap of Case 5 is caused by the convex relaxation, given a specific realization of uncontrollable injections, not being tight. In the deterministic setting, \citet{coffrin2016strengthening} report a gap of about \(9.3\%\) for Case 5. We can also find a general trend that the robust optimality gaps shown in Table~\ref{table:test12Table} are larger than their deterministic counterpart described in~\citet{coffrin2016strengthening}. For a specific uncontrollable injection, the nonconvex feasible region may coincide with that of the QC relaxation near the optimum, but once we take the intersection of feasible regions under the robust setting, this may no longer be true. The degree of this phenomenon depends on the power system structure and level of uncertainty, which may explain the larger gaps in Cases 9 and 300.

For Case 5, the infeasibility measure for model~\eqref{prob:rcACOPF}'s solution is about \(6\%\) of the total demand. It is under \(2.5\%\) of the total demand for other cases, and does not grow with the size of the uncertainty set ($\Gamma$). For example, for \(\Gamma = 1\) of Case 30, the unmet demand is 1.3 MW, about \(0.33\%\) of the total demand. These results contrast with the corresponding infeasibility of the nominal solution in column \(\tII(\hat{s}^0,\hat{u})\), where the magnitude can be significantly larger. For all test cases but Case 5, \(\Gamma = 1\), the lower bound on the infeasibility measure (see \(\underline{\tII}(\hat{s}^0,\hat{u})\) as defined in Section~\ref{subsec:nonconvex}), is close to the upper bound, \(\tII(\hat{s}^0,\hat{u})\), and significantly larger than the upper bound of the infeasibility measure for the robust solution, \(\tII(\hat{s},\hat{u})\). This suggests that the nominal solution is inferior to the robust solution in terms of feasibility under the worst-case scenario.

As we discuss in Section~\ref{subsec:nonconvex}, we are unsure whether $C_N$ is a valid upper bound on the optimal value of model~\eqref{prob:rACOPF} because we relax the model, replacing $\cU$ with $\cU^E$, and then obtain a local minimum, $s^*$. For this reason, we solve model~\eqref{prob:infeas} to obtain $I(s^*,u)$, or rather an upper bound on this value, at interior points of $u \in \cU$ to assess potential infeasibilities. When we do so for the eight test cases, each under three values of $\Gamma$, using 1000 uniform random vectors from the corresponding $\cU$, we obtain \(I(s^*,u) = 0\) in each instance. While we are still unsure that $C_N$ is a valid upper bound, we have been unable to find evidence that $s^*$ is infeasible at interior points of $\cU$, and this helps support using~$C_N$ as a proxy for an upper bound.

To construct set $\cU$, we correlate the uncontrollable injections at different buses as described in Section~\ref{sec:formulation} and Appendix~\ref{appen:facLoc}. Of course, injections may not occur in a worst-case manner or in a manner with this type of correlation.  To assess the performance of our solution in a stochastic environment, we assume that \(u = (u^p,u^q)\) is a uniform random vector in the box specified by the bounds in equation~\eqref{eqn:alphapmdef}, and we sample \(1000\) realizations. Given a solution, \(\hat{s}\), obtained from model~\eqref{prob:rcACOPF}, for each realization $u$, we find a local minimum to the nonconvex model~\eqref{prob:infeas}, and denote the upper bound on the infeasibility measure, \(\tII\). Next, we solve the convex relaxation of model~\eqref{prob:infeas} as we describe in Section~\ref{subsec:nonconvex} to compute the lower bound on the infeasibility measure, \(\underline{\tII}\). 

We show the computational results in Table~\ref{table:test4Table}. For a batch of 1000 realizations, we denote the mean violation in the nonconvex setting by \(\mu_{\tII}\), and the expected maximum violation by \(\tII_{\max}\). Their counterparts under the convex relaxation are denoted~\(\mu_{\underline{\tII}}\) and \(\underline{\tII}_{\max}\). Due to its probabilistic nature, we replicate this test 20 times to obtain a point estimate for these four infeasibility measures as well as \(95\%\) confidence intervals. As expected, our robust solution for \(\Gamma = 5\) is feasible for all $u \in \cU$ for the convex relaxation~\eqref{prob:rcACOPF} by construction, and \(\mu_{\underline{\tII}}\) and \(\underline{\tII}_{\max}\) decrease monotonically as \(\Gamma\) increases. We largely observe a similar trend for the nonconvex infeasibility measures, although there are a few exceptions. 

\begin{table}[h]
	\centering
	\resizebox{\linewidth}{!}{%
	\begin{tabular}{ l | l | l l | l l }
		\hline
		Test Case & \(\Gamma\) 
		& \(\tII_{\max} \pm \) CI halfwidth 
		& \(\underline{\tII}_{\max} \pm \) CI halfwidth 
		%& \(\mu_{\tII}\) 
		& \(\mu_{\tII} \pm \) CI halfwidth 
		& \(\mu_{\underline{\tII}} \pm \) CI halfwidth\\
		\hline
		\multirow{3}{*}{Case 5} 
		& 1 
		& \(9.05 \times 10^{-2} \pm 8.71 \times 10^{-3}\) 
		& \(6.33 \times 10^{-2} \pm 1.32 \times 10^{-2}\)
		& \(2.52 \times 10^{-2} \pm 1.34 \times 10^{-3}\) 
		& \(3.08 \times 10^{-3} \pm 5.94 \times 10^{-4}\) \\
		& 3 
		& \(4.93 \times 10^{-2} \pm 6.89 \times 10^{-3}\) 
		& \(1.63 \times 10^{-4} \pm 1.43 \times 10^{-3}\) 
		& \(7.24 \times 10^{-3} \pm 6.03 \times 10^{-4}\) 
		& \(1.63 \times 10^{-7} \pm 1.43 \times 10^{-6}\) \\
		& 5 
		& \(5.54 \times 10^{-2} \pm 3.14 \times 10^{-3}\) 
		& \(0 \pm 0\) 
		& \(1.71 \times 10^{-2} \pm 6.38 \times 10^{-4}\) 
		& \(0 \pm 0\) \\
		\hline
		\multirow{3}{*}{Case 9} 
		& 1 
		& \(9.24 \times 10^{-2} \pm 1.64 \times 10^{-2}\)  
		& \(9.24 \times 10^{-2} \pm 1.64 \times 10^{-2}\) 
		& \(4.17 \times 10^{-3} \pm 7.58 \times 10^{-4}\) 
		& \(4.17 \times 10^{-3} \pm 7.58 \times 10^{-4}\) \\
		& 3 
		& \(0 \pm 0\) 
		& \(0 \pm 0\) 
		& \(0 \pm 0\) 
		& \(0 \pm 0\) \\
		& 5 
		& \(6.69 \times 10^{-4} \pm 1.53 \times 10^{-3}\) 
		& \(0 \pm 0\) 
		& \(7.93 \times 10^{-7} \pm 2.34 \times 10^{-6}\) 
		& \(0 \pm 0\) \\
		\hline
		\multirow{3}{*}{Case 14} 
		& 1 
		& \(4.50 \times 10^{-2} \pm 1.08 \times 10^{-2}\) 
		& \(4.49 \times 10^{-2} \pm 1.08 \times 10^{-2}\)
		& \(1.24 \times 10^{-3} \pm 2.17 \times 10^{-4}\) 
		& \(1.23 \times 10^{-3} \pm 2.16 \times 10^{-4}\)\\
		& 3 
		& \(1.57 \times 10^{-2} \pm 9.62 \times 10^{-4}\) 
		& \(6.36 \times 10^{-4} \pm 3.18 \times 10^{-3}\) 
		& \(1.71 \times 10^{-3} \pm 2.08 \times 10^{-4}\) 
		& \(7.61 \times 10^{-7} \pm 4.05 \times 10^{-6}\)\\
		& 5 
		& \(1.16 \times 10^{-2} \pm 9.65 \times 10^{-4}\) 
		& \(0 \pm 0\) 
		& \(7.95 \times 10^{-4} \pm 1.28 \times 10^{-4}\) 
		& \(0 \pm 0\) \\
		\hline
		\multirow{3}{*}{Case 30} 
		& 1 
		& \(2.94 \times 10^{-2} \pm 5.68 \times 10^{-3}\) 
		& \(2.84 \times 10^{-2} \pm 6.36 \times 10^{-3}\)
		& \(1.06 \times 10^{-3} \pm 2.57 \times 10^{-4}\) 
		& \(6.81 \times 10^{-4} \pm 2.04 \times 10^{-4}\)\\
		& 3 
		& \(1.26 \times 10^{-2} \pm 6.46 \times 10^{-4}\) 
		& \(6.42 \times 10^{-4} \pm 2.14 \times 10^{-3}\)
		& \(1.65 \times 10^{-3} \pm 2.38 \times 10^{-4}\) 
		& \(9.73 \times 10^{-7} \pm 3.56 \times 10^{-6}\)\\
		& 5 
		& \(8.56 \times 10^{-3} \pm 6.47 \times 10^{-4}\) 
		& \(0 \pm 0\)
		& \(6.24 \times 10^{-4} \pm 1.33 \times 10^{-4}\) 
		& \(0 \pm 0\) \\
		\hline
		\multirow{3}{*}{Case 118} 
		& 1 
		& \(6.56 \times 10^{-2} \pm 1.45 \times 10^{-2}\) 
		& \(5.72 \times 10^{-2} \pm 1.69 \times 10^{-2}\)
		& \(1.14 \times 10^{-2} \pm 7.56 \times 10^{-4}\) 
		& \(3.29 \times 10^{-3} \pm 4.52 \times 10^{-4}\)\\
		& 3 
		& \(2.42 \times 10^{-2} \pm 1.04 \times 10^{-2}\)
		& \(6.88 \times 10^{-3} \pm 1.32 \times 10^{-2}\)
		& \(9.44 \times 10^{-4} \pm 1.82 \times 10^{-4}\)
		& \(7.94 \times 10^{-6} \pm 1.60 \times 10^{-5}\)\\
		& 5 
		& \(3.98 \times 10^{-3} \pm 2.84 \times 10^{-3}\) 
		& \(0 \pm 0\) 
		& \(2.32 \times 10^{-5} \pm 1.79 \times 10^{-5}\) 
		& \(0 \pm 0\) \\
		\hline
		\multirow{3}{*}{Case 300} & 1 
		& \(3.62 \times 10^{-2} \pm 7.35 \times 10^{-3}\)
		& \(3.52 \times 10^{-2} \pm 7.38 \times 10^{-3}\)
		& \(2.57 \times 10^{-3} \pm 2.63 \times 10^{-4}\)
		& \(2.03 \times 10^{-3} \pm 2.45 \times 10^{-4}\)\\
		& 3 
		& \(3.78 \times 10^{-3} \pm 4.04 \times 10^{-3}\) 
		& \(3.15 \times 10^{-3} \pm 4.66 \times 10^{-3}\)
		& \(1.11 \times 10^{-4} \pm 2.08 \times 10^{-5}\) 
		& \(4.03 \times 10^{-5} \pm 1.17 \times 10^{-5}\)\\
		& 5 
		& \(1.38 \times 10^{-3} \pm 4.04 \times 10^{-3}\)
		& \(0 \pm 0\) 
		& \(7.99 \times 10^{-5} \pm 1.51 \times 10^{-5}\) 
		& \(0 \pm 0\) \\
		\hline
		\multirow{3}{*}{Case 2383} 
		& 1 
		& \(2.41 \times 10^{-2} \pm 5.39 \times 10^{-3}\) 
		& \(2.29 \times 10^{-2} \pm 5.44 \times 10^{-3}\)
		& \(3.31 \times 10^{-3} \pm 1.98 \times 10^{-4}\) 
		& \(1.23 \times 10^{-3} \pm 2.04 \times 10^{-4}\)\\
		& 3 
		& \(7.52 \times 10^{-3} \pm 4.03 \times 10^{-3}\) 
		& \(5.71 \times 10^{-3} \pm 5.13 \times 10^{-3}\)
		& \(2.05 \times 10^{-3} \pm 6.60 \times 10^{-5}\) 
		& \(1.38 \times 10^{-5} \pm 1.48 \times 10^{-5}\)\\
		& 5 
		&  \(4.89 \times 10^{-3} \pm 3.41 \times 10^{-4}\) 
		& \(0 \pm 0\) 
		& \(2.06 \times 10^{-3} \pm 5.76 \times 10^{-5}\) 
		& \(0 \pm 0\) \\
		\hline
		\multirow{3}{*}{Case 2746} 
		& 1 
		& \(3.22 \times 10^{-2} \pm 7.91 \times 10^{-3}\) 
		& \(3.12 \times 10^{-2} \pm 7.90 \times 10^{-3}\)
		& \(1.95 \times 10^{-3} \pm 2.35 \times 10^{-4}\) 
		& \(1.71 \times 10^{-3} \pm 2.21 \times 10^{-4}\)\\
		& 3 
		& \(5.75 \times 10^{-3} \pm 5.06 \times 10^{-3}\) 
		& \(4.59 \times 10^{-3} \pm 4.82 \times 10^{-3}\)
		& \(2.16 \times 10^{-5} \pm 1.14 \times 10^{-5}\) 
		& \(7.73 \times 10^{-6} \pm 8.70 \times 10^{-6}\)\\
		& 5 
		& \(3.74 \times 10^{-4} \pm 1.35 \times 10^{-4}\) 
		& \(0 \pm 0\) 
		& \(4.18 \times 10^{-6} \pm 1.43 \times 10^{-6}\) 
		& \(0 \pm 0\) \\
		\hline
	\end{tabular}}
	\caption{Computational results for solving model~\eqref{prob:rcACOPF} for a range of values of \(\Gamma\), and then assessing feasibility, along with 95\% confidence intervals, using 20 replications over \(1000\) uniformly distributed realizations over the hyper-rectangle governing \(u\). }
	\label{table:test4Table}
\end{table}

Next, we report the computational performance of Algorithm~\ref{alg:Cut} and its scenario-appending improvement scheme. If we append scenario-specific constraints to the master problem, the master becomes larger and takes longer to solve, but this helps decrease the number of iterations of the cutting-plane algorithm. In Table~\ref{table:test5Table} we show computational results for Cases~118 and 300, which best exemplify the effectiveness of the scenario-appending scheme. The table shows that direct application of Algorithm~\ref{alg:Cut} fails to obtain an $\epsilon$-feasible solution within 300 iterations, but by appending scenarios to the master, we solve Cases~118 and~300 in at most 12 iterations. The results that we report elsewhere in this section all use the improvement of appending scenarios to the master program in which $n_c$, the number of repetitions after which a scenario $\hat{u} \in \cU$ is appended to the master program, is $n_c=1$;  see Section~\ref{subsec:scenAppend}. 

\begin{table}[H]
	\centering
	\begin{tabular}{ l | l l | l l | l l}
		\hline
		\multirow{2}{*}{Parameters} & \multicolumn{2}{ c |}{No. of iterations} &  \multicolumn{2}{ c |}{\(\epsilon\)-feasibility achieved} & \multicolumn{2}{ c}{\(T\) (sec.)}\\
		& Case 118 & Case 300 & Case 118 & Case 300 & Case 118 & Case 300 \\
		\hline
		Algorithm~\ref{alg:Cut} & 300 & 300 & No & No & 2414 & 23042\\
		\hline
		\(n_c = 1\) & 5 & 6 & Yes & Yes & 53 & 306 \\
		\(n_c = 2\) & 6 & 5 & Yes & Yes & 56 & 226 \\
		\(n_c = 3\) & 7 & 9 & Yes & Yes & 63 & 366 \\
		\(n_c = 4\) & 8 & 11 & Yes & Yes & 70 & 447 \\
		\(n_c = 5\) & 9 & 12 & Yes & Yes & 77 & 463 \\
		\hline
	\end{tabular}
	\caption{Computational results of solving model~\eqref{prob:rcACOPF} with different improvement techniques for Case 118 and Case 300 with \(\Gamma = 3\).}
	\label{table:test5Table}
\end{table}

The scenario-appending method aims to reduce the number of iterations of Algorithm~\ref{alg:Cut}. To decrease the running time of each iteration, we compare the computational performance of two alternatives: solving the MISOCP~\eqref{prob:dualSubIBudget} directly, or enumerating all extreme points of $\cU$ and solving the corresponding SOCPs individually, which is possible when \(|\cM|\) is modest. For example, if \(\Gamma = 1\), there are only \(2|\cM|\) extreme points, and solving this moderate number of SOCPs each iteration may reduce computation time, especially when parallelizing the calculations. Furthermore, rather than identifying $u \in \cU$  for a most violated constraint, we can generate multiple feasibility cuts in one iteration to again attempt to reduce the number of overall iterations. For our tests, we generate cuts at the \(10\) most violated scenarios at each iteration, and we solve SOCPs in parallel with \(20\) threads.

We present the test results in Table~\ref{table:timeperformanceNT}. The number of extreme points of \(\cU\) is denoted by \(N\) and the number of iterations until \(\epsilon\)-feasibility is achieved is denoted ``iter.'' and the clock time of the two approaches is denoted by ``time.''  For all test cases, solving the SOCPs in parallel requires less time than solving MISOCPs. When \(|\cM|\) is small, the number of extreme points, $N$, of \(\cU\) is modest, and so solving SOCPs corresponding to each extreme point in parallel can be more efficient than solving MISOCPs. In our implementation, we note that solving the MISOCP---either directly or by enumerating SOCPs---is the computational bottleneck. Gurobi generates outer linearizations of SOCP subproblems in its branch-and-bound algorithm, which is the default setting. This facilitates warm starts for the resulting linear programs and tends to improve numerical performance over solving subproblems directly as SOCPs. Still, we encounter numerical problems when solving the MISOCP for Case 2746. Of course as the number of extreme points of \(\cU\) becomes large, enumeration of SOCPs is not viable, and solving the corresponding MISOCP, or developing an alternative, in such cases requires further work.

Table~\ref{table:test4Table} compares results for solutions $\hat{s}$ with optimal value $C_R$ from model~\eqref{prob:rcACOPF}; ${s}^*$ with optimal value $C_N$ from model~\eqref{prob:rACOPF} with $\cU$ replaced by $\cU^E$; and, $\hat{s}^0$ with optimal value $C_0$ from the nominal model.  The ``MISOCP'' and ``SOCP'' columns of Table~\ref{table:timeperformanceNT} concern the computational effort to obtain $\hat{s}$, and the two right-most columns concern the effort to compute $s^*$ (``Nonconvex'') and $\hat{s}^0$ (``Nominal''). As we indicate in Section~\ref{subsec:nonconvex} we do not solve model~\eqref{prob:rACOPF} under $\cU^E$ directly because it is too computationally expensive for our largest problems. Rather we iteratively identify violated scenarios and append them to the master program, and doing so involves solving nonconvex ACOPF problems in parallel using Ipopt. The resulting computational effort (``Nonconvex'') is less than that of ``SOCP'', but similar in order of magnitude. The times to solve the nominal problem are significantly less, but the solution it provides can be far from feasible; see Table~\ref{table:test4Table}.

\begin{table}[h]
	\centering
	\resizebox{\linewidth}{!}{%
	\begin{tabular}{ l | l  l | c  r | c r | c r | c }
		\hline
		&  & & \multicolumn{2}{c|}{MISOCP} 	& \multicolumn{2}{c|}{{SOCP}} & \multicolumn{2}{c|}{{Nonconvex}} & {Nominal}\\
		Test Case & \(\Gamma\) & \(N\) & iter.\ & time (sec.) & {iter.}\ & {time (sec.)} & {iter.}\ & {time (sec.)} & {time (sec.)} \\ \hline
		\multirow{3}{*}{Case 5} & 1 & 10 & 2 & {0.7}  & {2} & {0.4} & {2} & {0.2} & {0.05}\\
		& 3 & 80 & 2 & {0.5} & {2} & {0.7} & {2} & {0.4} & {0.07}\\
		& 5 & 32 & 2 & {0.6} & {1} & {0.2} & {2} & {0.3} & {0.05}\\
		\hline
		\multirow{3}{*}{Case 9}  & 1 & 10 & 2 & {1.2} & {2} & {0.4} & {2} & {0.3} & {0.09} \\
		& 3 & 80 & 2 & {0.9} & {2} & {0.8} & {2} & {0.5} & {0.10}\\
		& 5 & 32 & 2 & {1.1} & {1} & {0.3} & {2} & {0.4} & {0.08}\\
		\hline
		\multirow{3}{*}{Case 14}  & 1 & 10 & 2 & {1.9} & {2} & {0.9} & {2} & {0.4} & {0.2}\\
		& 3 & 80 & 2 & {2.2} & {2} & {1.5} & {3} & {1.3} & {0.2}\\
		& 5 & 32 & 2 & {2.0} & {1} & {0.4} & {2} & {0.5} & {0.1}\\
		\hline
		\multirow{3}{*}{Case 30}  & 1 & 10 & 2 & {4.3} & {2} & {1.4} & {2} & {0.6} & {0.3}\\
		& 3 & 80 & 2 & {4.4} & {2} & {2.7} & {2} & {1.6} & {0.4}\\
		& 5 & 32 & 2 & {5.3} & {1} & {0.8} & {2} & {0.9} & {0.3}\\
		\hline
		\multirow{3}{*}{Case 118}  & 1 & 10 & {2} & {20.3} & {3} & {15.6} & {2} & {3.3} & {1.7 }\\
		& 3 & 80 & 5 & {53.0} & {2} & {18.5} & {3} & {13.5} & {1.6}\\
		& 5 & 32 & 2 & {27.8} & {1} & {4.1} & {2} & {6.5} & {1.6}\\
		\hline
		\multirow{3}{*}{Case 300}  & 1 & 10 & {4} & {122.9} & {3} & {44.2} & {3} & {17.2} & {4.3}\\
		& 3 & 80 & 6 & {306.1} & {2} & {61.6} & {2} & {25.7} & {4.7}\\
		& 5 & 32 & {2} & {170.2}& {1} & {14.8} & {1} & {6.0} & {4.6}\\
		\hline
		\multirow{3}{*}{Case 2383} & 1 & 10 & {2} & {1508.5} & {2} & {344.8} & {2} & {87.1} & {76.4 }\\
		& 3 & 80 & {2} & {3008.4} & {2} & {848.6} & {3} & {366.4} & {76.2}\\
		& 5 & 32 & {1} & {2071.1} & {1} & {257.7} & {1} & {56.1} & {79.4 }\\
		\hline
		\multirow{3}{*}{Case 2746} & 1 & 10 & \(-\) & \(-\)& {2} & {557.6} & {2} & {250.8} & {102.5 }\\
		& 3 & 80 & \(-\) & \(-\)& {2} & {1175.3} & {2} & {330.5} & {121.9}\\
		& 5 & 32 & \(-\) & \(-\)& {1} & {250.8} & {2} & {272.3} & {95.6}\\
		\hline
	\end{tabular}}
	\caption{Comparison of solving model~\eqref{prob:rcACOPF} by Algorithm~\ref{alg:Cut} using \eqref{prob:dualSubIBudget} (``MISOCP'' column), or by solving enumerating SOCPs for each \(u \in \cU^E\). The ``Nonconvex'' column shows computation times for solving model~\eqref{prob:rACOPF} with $\cU$ replaced by $\cU^E$, and the right-most column shows that for the single-scenario nominal problem.}
	\label{table:timeperformanceNT}
\end{table}	

\section{Conclusions}
\label{sec:conclusion}
In this paper we present a model to relax the nonconvex robust ACOPF problem to a robust convex program with recourse. A cutting-plane algorithm is proposed to solve the convex relaxation, and within each iteration of the cutting-plane algorithm, an MISOCP is solved to generate a cut separating the incumbent solution from the robust convex feasible region.  In summary, we: 
\begin{itemize}
	\item formulated a two-stage robust model that permits full recourse decisions rather than simpler, e.g., linear, decision rules;
	\item established desirable convergence properties of a cutting-plane algorithm; 
	\item showed that our algorithm can provide a good lower bound for the nonconvex ACOPF problem~\eqref{prob:rACOPF};
	\item found the solution to the robust convex relaxation model~\eqref{prob:rcACOPF} is robust in the nonconvex setting, provided its deterministic QC relaxation is reasonably tight; and,
	\item reduced solution time in the cutting-plane algorithm by appending a small number of key scenarios to the master program. 
\end{itemize}
There are many possible ways to extend the result of this research. One important direction is to reduce the computational effort required to solve the ``separation problem,'' which is currently modeled as an MISOCP. Doing so would further facilitate scaling our algorithm to larger problems. {Finding a valid upper bound for our nonconvex robust ACOPF problem, or further characterizing our proxy for an upper bound, would be valuable by itself, but also has the potential to be combined with the optimization-based bound-tightening process~\citep[e.g.,][]{sundar2018OBBT} to further tighten the formulation and the lower bound that we obtain. Furthermore, our modeling framework, and associated solution algorithm, has the potential to be used in, or adapted to, applications of robust optimization with convex recourse in areas such as microgrid planning~\citep[e.g.,][]{khodaei2014resiliency}, location transportation~\citep[e.g.,][]{gabrel2014robust}, and call center staffing~\citep[e.g.,][]{gurvich_etal_2010,zan_etal_2014}.}
\vspace{0.2cm}

\bibliographystyle{plainnat} 
\bibliography{RO-ACOPF_ArXiv}

\begin{appendix}	
	\section{Grouping Buses} \label{appen:facLoc}
	To construct the uncertainty set $\cU$, we solve a facility location problem for each test case to cluster the buses. That is, while we solve the test instances with the full resolution of network topology, as indicated in equation~\eqref{eqn:groupBounds}, uncertain injections at buses occur in concert within a cluster. We assume the distance between two directly connected buses is 1, and more generally, the distance between two buses is the length of the shortest path (counted in hops) between them. We select a total of $|\cM|=5$ buses to be the ``facilities" and assign each bus to a facility. All buses that are assigned to a facility are considered a cluster, i.e., elements of $\cN_m$. The detailed formulation is expressed as follows:
	\begin{table}[H]
		\small
		\begin{tabular}{ l l l l }
			\multicolumn{4}{l}{{\bf Indices and index sets}} \\
			%\\
			\(i \in \cN\) &\(\qquad\) & set of buses;&\\
			\(J_i \subseteq \cN\) & \(\qquad\) & set of buses eligible to be associated with bus \(i\), \(i \in \cN\);
			\\
			\multicolumn{4}{l}{{\bf Parameters}} \\
			\(d_{ij}\)&\(\qquad\)& distance between bus \(i\) and \(j\), \(i,j \in \cN\);&\\
			\(N\)&\(\qquad\)& number of facilities ($|\cM|$);&\\
			\multicolumn{4}{l}{\bf Decision variables}\\
			\(x_{ij}\)& \(\qquad\) & indicator of whether bus \(i\) is assigned to bus \(j\), \(i,j \in \cN\);&\\
			\(y_{i}\)& \(\qquad\) & indicator of whether bus \(i\) is selected as a facility, \(i \in \cN\).&
		\end{tabular}
	\end{table}	
	{\bf Facility Location Problem:}
	\begin{subequations}\label{model:facility_location}
		\begin{align}
		\min \quad & \sum_{i \in \cN} \sum_{j \in J_i} d_{ij} x_{ij} \\
		\text{s.t.} \quad & \sum_{j \in J_i} x_{ij} = 1 \quad \qquad \forall i \in \cN\\
		& \sum_{i \in \cN} y_{i} = N \\
		& x_{ij} \in \{0,1\} \; \; \quad \qquad \forall i \in \cN, j \in J_i \\
		& y_i \in \{0,1\} \; \;\; \quad \qquad \forall i \in \cN. 
		\end{align}
	\end{subequations}
	To facilitate computational tractability, we control the size of \(J_i\) via a distance threshold so that we can only assign one bus to another if their distance is within the threshold. The clusters formed by model~\eqref{model:facility_location} define the sets $\cM$ and $\cN_m$ used in equation~\eqref{eqn:groupBounds} to construct $\cU$ as described at the beginning of Section~\ref{sec:formulation}.
	
	In addition to clustering buses, we distinguish uncertainty in load and in renewable generation as described in Section~\ref{subsec:alphSel}. The latter occurs only at a subset of buses, denote $\cN_G$, and we now describe construction of this set. Once clusters are formed, we select the two buses in each cluster that have the largest capacity, defined by the sum of the capacities of the incident lines. If the cluster is a singleton, then only that bus is selected, and the process for other clusters remains the same. The buses selected in this way form set~$\cN_G$.
	
	\vspace{1cm}
	
	\section{QC Relaxation} \label{appen:QCRelaxation}
	We use the quadratic convex (QC) relaxation from \citet{coffrin2016qc}. Nonconvex functions in the ACOPF problem, such as quadratic, cosine and sine functions are transformed into a collection of second-order cone constraints and linear constraints. A McCormick relaxation is applied to linearize multi-linear terms. We also assume the  difference in phase angles at adjacent buses \(i\) and \(j\) satisfies \(-\frac{\pi}{6} \leq \underline{\Delta}_{k} \leq \theta_i - \sigma_k - \theta_j \leq \overline{\Delta}_{k} \leq \frac{\pi}{6}\). The detailed formulation of the QC relaxation is expressed as follows:
	\begin{longtable}{ l l l l }
		\multicolumn{4}{l}{\bf Indices and index sets} \\
		%\\
		\(i \in \cN\) & \(\qquad\) & set of buses;&\\
		\(k = (i,j,n) \in \cA\) & \(\qquad\) & set of lines; &\\
		\(g \in \cG\) & \(\qquad\) & set of generators;&\\
		\(g \in \cG_i\) &\(\qquad\) & set of generators that are connected to bus \(i \in \cN\);&\\
		\\
		\multicolumn{4}{l}{\bf Parameters} \\
		%\\
		\(u_i^p\)& \(\qquad\) & uncontrollable active power injection at bus \(i \in \cN\);&\\
		\(u_i^q\)& \(\qquad\) & uncontrollable reactive power injection at bus \(i \in \cN\);&\\
		\(\underline{s}_g^p, \overline{s}_g^p\)& \(\qquad\) & lower and upper bound of active power generation by generator \(g\), \(g \in \cG\);&\\
		\(\underline{s}_g^q, \overline{s}_g^q\)& \(\qquad\) & lower and upper bound of reactive power generation by generator \(g\), \(g \in \cG\);&\\
		\(\underline{v}_i, \overline{v}_i\)& \(\qquad\) & lower and upper bound of voltage magnitude at bus \(i\), \(i \in \cN\);&\\
		\(\underline{\theta}_{i}, \overline{\theta}_{i}\)& \(\qquad\) & lower and upper bound of phase angle at bus \(i \in \cN\);&\\
		\(\underline{\Delta}_{k}, \overline{\Delta}_{k}\)& \(\qquad\) & lower and upper bound of phase angle difference of adjacent buses&\\
		& \(\qquad\) & on line \(k \in \cA\);& \\
		\(\underline{cs}_{k}, \overline{cs}_{k}\)& \(\qquad\) & lower and upper bound of cosine of phase angle difference of  \\
		&& adjacent buses on line \(k \in \cA\);&\\
		\(\underline{ss}_{k}, \overline{ss}_{k}\)& \(\qquad\) & lower and upper bound of sine of phase angle difference of \\
		&& adjacent buses on line \(k \in \cA\);&\\
		\(g_k\)& \(\qquad\) & conductance of line \(k\), \(k \in \cA\);&\\
		\(b_k\)& \(\qquad\) & susceptance of line \(k\), \(k \in \cA\);&\\
		\(b_k^c\) & \(\qquad\) & charging susceptance of line \(k\), \(k \in \cA\); & \\
		\(g_i^{sh}\) & \(\qquad\) & shunt conductance of bus \(i\), \(i \in \cN\); & \\
		\(b_i^{sh}\) & \(\qquad\) & shunt susceptance of bus \(i\), \(i \in \cN\); & \\
		\(W_k\) & \(\qquad\) & maximum apparent power flow on line \(k\), \(k \in \cA\);&\\
		\(\tau_{1,k}\) & \(\qquad\) & tap ratio of transformer at bus \(i\) on line \(k\), \(k \in \cA\); & \\
		\(\tau_{2,k}\) & \(\qquad\) & tap ratio of transformer at bus \(j\) on line \(k\), \(k \in \cA\); & \\
		\(\sigma_k\) & \(\qquad\) & phase angle shift of transformer on line \(k\), \(k \in \cA\); &\\
		\(\theta_{k}^u\)& \(\qquad\) & upper bound of absolute value of phase angle difference \\
		&&of line \(k\), \(k \in \cA\), \(\theta^u_{k} = \max(|\overline{\Delta}_{k}|,|\underline{\Delta}_{k}|)\);&\\
		\(\underline{v}_i^\delta\)& \(\qquad\) & sum of lower and upper bound of voltage magnitude at bus \(i\), \(i \in \cN\), &\\
		& & \(v^\delta_i = \overline{v}_i + \underline{v}_i\);&\\
		\(\theta_{k}^\phi\)& \(\qquad\) & mid-point of range of difference in phase angles on line \(k\), \(k \in \cA\), \\
		&&\(\theta^\phi_{k} = \frac{(\overline{\Delta}_{k} + \underline{\Delta}_{k})}{2}\);&\\
		\(\theta_{k}^\delta\)& \(\qquad\) & range of difference in phase angles on line \(k\), \(k \in \cA\), \(\theta^\delta_{k} = \frac{(\overline{\Delta}_{k} - \underline{\Delta}_{k})}{2}\);&\\
		\\
		\multicolumn{4}{l}{\bf Decision variables}\\
		%\\
		\(s_{g}^p\) & \(\qquad\) & active power generation at generator \(g\), \(g \in \cG\);&\\
		\(s_{g}^q\) & \(\qquad\) & reactive power generation at generator \(g\), \(g \in \cG\);&\\
		\(v_{i}\) & \(\qquad\) & voltage magnitude at bus \(i\), \(i \in \cN\);&\\
		\(\theta_{i}\) & \(\qquad\) & phase angle at bus \(i\), \(i \in \cN\);&\\
		\(P_{k}\) & \(\qquad\) & active power flow on line \(k\), \(k \in \cA\);&\\
		\(Q_{k}\) & \(\qquad\) & reactive power flow on line \(k\), \(k \in \cA\);&\\
		\(\widehat{cs}_{k}\)& \(\qquad\) & approximation term of \(\cos(\theta_i - \sigma_k - \theta_j)\), \( k = (i,j,n) \in \cA\);&\\
		\(\widehat{ss}_{k}\)& \(\qquad\) & approximation term of \(\sin(\theta_i - \sigma_k - \theta_j)\), \( k = (i,j,n) \in \cA\);&\\
		\(\hat{v}_{i}\)& \(\qquad\) & approximation term of \(v_i^2\), \(i \in \cN\);&\\
		\(\widehat{vv}_{k}\)& \(\qquad\) & approximation term of \(\frac{v_i v_j}{\tau_{1,k} \tau_{2,k}}\), \( k = (i,j,n) \in \cA\);&\\
		\(\widehat{wc}_{k}\)& \(\qquad\) & approximation term of \(\frac{v_i v_j}{\tau_{1,k} \tau_{2,k}}\cos(\theta_i - \sigma_k - \theta_j)\), \( k = (i,j,n) \in \cA\);&\\
		\(\widehat{ws}_{k}\)& \(\qquad\) & approximation term of \(\frac{v_i v_j}{\tau_{1,k} \tau_{2,k}}\sin(\theta_i - \sigma_k - \theta_j)\), \( k = (i,j,n) \in \cA\).&\\
	\end{longtable}
	
	\noindent {\bf Formulation:}
	\begin{subequations}
		\begin{align}
		\min \quad & c(s^p,s^q)&\\
		\text{s.t.} \quad & \underline{v}_i \leq v_i \leq \overline{v}_i & \forall i \in \cN \label{app:vSimple}\\
		& \underline{\Delta}_{k} \leq \theta_i - \sigma_k - \theta_j \leq \overline{\Delta}_{k} & \forall k = (i,j,n) \in \cA \label{app:thetaSimple}\\
		& \underline{\theta}_{i} \leq \theta_{i} \leq \overline{\theta}_i & \forall i \in \cN \label{app:thetaB}\\
		&\underline{cs}_{k} \leq \widehat{cs}_{k} \leq \overline{cs}_{k} & \forall k = (i,j,n) \in \cA \label{app:csSimple}\\
		&\underline{ss}_{k} \leq \widehat{ss}_{k} \leq \overline{ss}_{k} & \forall k = (i,j,n) \in \cA \label{app:ssSimple}\\
		& \underline{s}_g^p \leq s_g^p \leq \overline{s}_g^p & \forall g \in \cG \label{app:spConstr}\\
		& \underline{s}_g^q \leq s_g^q \leq \overline{s}_g^q & \forall g \in \cG \label{app:sqConstr}\\
		& P_{k} = g_{k}\frac{\hat{v}_i}{(\tau_{1,k})^2} - g_{k}\frac{\widehat{wc}_{k}}{\tau_{1,k} \tau_{2,k}} - b_{k}\frac{\widehat{ws}_{k}}{\tau_{1,k} \tau_{2,k}} & \forall k = (i,j,n) \in \cA \label{app:Pk}\\
		& Q_{k} = - (b_{k} + \frac{b_k^c}{2})\frac{\hat{v}_i}{(\tau_{1,k})^2} + b_{k}\frac{\widehat{wc}_{k}}{\tau_{1,k} \tau_{2,k}} - g_{k}\frac{\widehat{ws}_{k}}{\tau_{1,k} \tau_{2,k}} & \forall k = (i,j,n) \in \cA \label{app:Qk}\\
		& P_k^2 + Q_k^2 \leq W_k^2 & \forall k \in \cA \label{app:pfUB}\\
		&\sum_{k= (i,j,n) \in \cA} P_k + g_i^{sh} \hat{v}_i = \sum_{g \in \cG_i} s_g^p + u_i^p & \forall i \in \cN \label{app:balanceP}\\
		&\sum_{k= (i,j,n) \in \cA} Q_k - b_i^{sh} \hat{v}_i = \sum_{g \in \cG_i} s_g^q + u_i^q & \forall i \in \cN \label{app:balanceQ}\\
		&\widehat{vv}_{k}^2 \leq \frac{\hat{v}_i}{\tau_{1,k}^2} \frac{\hat{v}_j}{\tau_{2,k}^2} & \forall k = (i,j,n) \in \cA \label{app:vvleq}\\
		&\widehat{cs}_{k} + \frac{1-\cos \left(\theta^u_{k} \right)}{\left({\theta^u_{k}} \right)^2}(\theta_i - \sigma_k - \theta_j)^2 \leq 1  & \forall k = (i,j,n) \in \cA \label{app:cs1}\\
		&\widehat{cs}_{k} \geq \frac{\cos \left(\overline{\Delta}_{k} \right) - \cos \left(\underline{\Delta}_{k} \right)}{\overline{\Delta}_{k} - \underline{\Delta}_{k}} (\theta_i - \sigma_k - \theta_j - \underline{\Delta}_{k}) + \cos \left(\underline{\Delta}_{k} \right) & \forall k = (i,j,n) \in \cA \label{app:cs2}\\
		&\widehat{ss}_{k} - \cos \left(\frac{\theta^u_{k}}{2} \right) (\theta_i - \sigma_k - \theta_j) \leq \sin \left(\frac{\theta^u_{k}}{2} \right)-\frac{\theta^u_{k}}{2}\cos \left(\frac{\theta^u_{k}}{2} \right) & \forall k = (i,j,n) \in \cA \label{app:ss1}\\
		&-\widehat{ss}_{k} + \cos \left(\frac{\theta^u_{k}}{2} \right) (\theta_i - \sigma_k - \theta_j) \leq \sin \left(\frac{\theta^u_{k}}{2} \right)-\frac{\theta^u_{k}}{2}\cos \left(\frac{\theta^u_{k}}{2} \right)& \forall k = (i,j,n) \in \cA \label{app:ss2}\\
		& v_i^2 - \hat{v}_i \leq 0 & \forall i \in \cN \label{app:vsqr1}\\
		& \hat{v}_i - (\overline{v}_i + \underline{v}_i)v_i \leq - \overline{v}_i\underline{v}_i & \forall i \in \cN \label{app:vsqr2}\\
		&\widehat{vv}_{k} \geq \frac{\underline{v}_i}{\tau_{1,k}} \frac{v_j}{\tau_{2,k}} + \frac{\underline{v}_j}{\tau_{2,k}} \frac{v_i}{\tau_{1,k}} - \frac{\underline{v}_i}{\tau_{1,k}} \frac{\underline{v}_j}{\tau_{2,k}} &\forall k = (i,j,n) \in \cA \label{app:vv1}\\
		&\widehat{vv}_{k} \geq \frac{\overline{v}_i}{\tau_{1,k}} \frac{v_j}{\tau_{2,k}} + \frac{\overline{v}_j}{\tau_{2,k}} \frac{v_i}{\tau_{1,k}} - \frac{\overline{v}_i}{\tau_{1,k}} \frac{\overline{v}_j}{\tau_{2,k}} & \forall k = (i,j,n) \in \cA \label{app:vv2}\\
		&\widehat{vv}_{k} \leq \frac{\underline{v}_i}{\tau_{1,k}} \frac{v_j}{\tau_{2,k}} + \frac{\overline{v}_j}{\tau_{2,k}} \frac{v_i}{\tau_{1,k}} -  \frac{\underline{v}_i}{\tau_{1,k}} \frac{\overline{v}_j}{\tau_{2,k}} & \forall k = (i,j,n) \in \cA \label{app:vv3}\\
		&\widehat{vv}_{k} \leq \frac{\overline{v}_i}{\tau_{1,k}} \frac{v_j}{\tau_{2,k}} + \frac{\underline{v}_j}{\tau_{2,k}} \frac{v_i}{\tau_{1,k}} - \frac{\overline{v}_i}{\tau_{1,k}} \frac{\underline{v}_j}{\tau_{2,k}} & \forall k = (i,j,n) \in \cA \label{app:vv4}\\
		&\widehat{wc}_{k} \geq \frac{\underline{v}_i \underline{v}_j}{\tau_{1,k} \tau_{2,k}} \widehat{cs}_{k} + \underline{cs}_{k} \widehat{vv}_{k} - \frac{\underline{v}_i \underline{v}_j}{\tau_{1,k} \tau_{2,k}} \underline{cs}_{k} & \forall k = (i,j,n) \in \cA \label{app:wc1}\\
		&\widehat{wc}_{k} \geq \frac{\overline{v}_i \overline{v}_j}{\tau_{1,k} \tau_{2,k}} \widehat{cs}_{k}  + \overline{cs}_{k} \widehat{vv}_{k} - \frac{\overline{v}_i \overline{v}_j}{\tau_{1,k} \tau_{2,k}} \overline{cs}_{k} & \forall k = (i,j,n) \in \cA \label{app:wc2}\\
		&\widehat{wc}_{k} \leq \frac{\underline{v}_i \underline{v}_j}{\tau_{1,k} \tau_{2,k}} \widehat{cs}_{k} + \overline{cs}_{k} \widehat{vv}_{k} - \frac{\underline{v}_i \underline{v}_j}{\tau_{1,k} \tau_{2,k}} \overline{cs}_{k} & \forall k = (i,j,n) \in \cA \label{app:wc3}\\
		&\widehat{wc}_{k} \leq \frac{\overline{v}_i \overline{v}_j}{\tau_{1,k} \tau_{2,k}} \widehat{cs}_{k} + \underline{cs}_{k} \widehat{vv}_{k} - \frac{\overline{v}_i \overline{v}_j}{\tau_{1,k} \tau_{2,k}} \underline{cs}_{k} & \forall k = (i,j,n) \in \cA \label{app:wc4}\\
		&\widehat{ws}_{k} \geq \frac{\underline{v}_i \underline{v}_j}{\tau_{1,k} \tau_{2,k}} \widehat{ss}_{k} + \underline{ss}_{k} \widehat{vv}_{k} - \frac{\underline{v}_i \underline{v}_j}{\tau_{1,k} \tau_{2,k}} \underline{ss}_{k} & \forall k = (i,j,n) \in \cA \label{app:ws1}\\
		&\widehat{ws}_{k} \geq \frac{\overline{v}_i \overline{v}_j}{\tau_{1,k} \tau_{2,k}} \widehat{ss}_{k}  + \overline{ss}_{k} \widehat{vv}_{k} - \frac{\overline{v}_i \overline{v}_j}{\tau_{1,k} \tau_{2,k}} \overline{ss}_{k} & \forall k = (i,j,n) \in \cA \label{app:ws2}\\
		&\widehat{ws}_{k} \leq \frac{\underline{v}_i \underline{v}_j}{\tau_{1,k} \tau_{2,k}} \widehat{ss}_{k} + \overline{ss}_{k} \widehat{vv}_{k} - \frac{\underline{v}_i \underline{v}_j}{\tau_{1,k} \tau_{2,k}} \overline{ss}_{k} & \forall k = (i,j,n) \in \cA \label{app:ws3}\\
		&\widehat{ws}_{k} \leq \frac{\overline{v}_i \overline{v}_j}{\tau_{1,k} \tau_{2,k}} \widehat{ss}_{k} + \underline{ss}_{k} \widehat{vv}_{k} - \frac{\overline{v}_i \overline{v}_j}{\tau_{1,k} \tau_{2,k}} \underline{ss}_{k} & \forall k = (i,j,n) \in \cA \label{app:ws4}\\
		& \widehat{ws}_{k} - \tan(\overline{\Delta}_{k}) \widehat{wc}_{k} \leq 0 & \forall k = (i,j,n) \in \cA \label{app:trig1}\\
		& \widehat{ws}_{k} - \tan(\underline{\Delta}_{k}) \widehat{wc}_{k} \geq 0 & \forall k = (i,j,n) \in \cA \label{app:trig2}\\
		& \frac{v^\delta_i v^\delta_j }{\tau_{1,k} \tau_{2,k}} (\widehat{wc}_{k} \cos(\theta_{k}^\phi) + \widehat{ws}_{k} \sin(\theta_{k}^\phi)) - \frac{\overline{v}_j}{\tau_{2,k}} \cos(\theta_{k}^\delta) \frac{v^\delta_j}{\tau_{2,k}} \frac{\hat{v}_i}{\tau_{1,k}^2} & \nonumber \\
		& - \frac{\overline{v}_i}{\tau_{1,k}} \cos(\theta_{k}^\delta) \frac{v^\delta_i}{\tau_{1,k}} \frac{\hat{v}_j}{\tau_{2,k}^2} \geq \frac{\overline{v}_i \overline{v}_j}{\tau_{1,k} \tau_{2,k}} \cos(\theta_{k}^\delta) (\frac{\underline{v}_i \underline{v}_j}{\tau_{1,k} \tau_{2,k}} - \frac{\overline{v}_i \overline{v}_j}{\tau_{1,k} \tau_{2,k}}) & \forall k = (i,j,n) \in \cA \label{app:nlc1}\\
		& \frac{v^\delta_i v^\delta_j}{\tau_{1,k}\tau_{2,k}} (\widehat{wc}_{k} \cos(\theta_{k}^\phi) + \widehat{ws}_{k} \sin(\theta_{k}^\phi)) - \frac{\underline{v}_j}{\tau_{2,k}} \cos(\theta_{k}^\delta) \frac{v^\delta_j}{\tau_{2,k}} \frac{\hat{v}_i}{\tau_{1,k}^2} & \nonumber \\
		& - \frac{\underline{v}_i}{\tau_{1,k}} \cos(\theta_{k}^\delta) \frac{v^\delta_i}{\tau_{1,k}} \frac{\hat{v}_j }{\tau_{2,k}^2}\geq -\frac{\underline{v}_i \underline{v}_j}{\tau_{1,k} \tau_{2,k}} \cos(\theta_{k}^\delta) (\frac{\underline{v}_i \underline{v}_j}{\tau_{1,k} \tau_{2,k}} - \frac{\overline{v}_i \overline{v}_j}{\tau_{1,k} \tau_{2,k}}) & \forall k = (i,j,n) \in \cA \label{app:nlc2}\\
		& \widehat{cs}_{k} = \widehat{cs}_{\tilde{k}} & \forall k = (i,j,n) \in \cA \label{app:cseq}\\
		& \widehat{ss}_{k} = -\widehat{ss}_{\tilde{k}} & \forall k = (i,j,n) \in \cA \label{app:sseq}\\
		& \widehat{wc}_{k} = \widehat{wc}_{\tilde{k}} & \forall k = (i,j,n) \in \cA \label{app:wceq}\\
		& \widehat{ws}_{k} = -\widehat{ws}_{\tilde{k}} & \forall k = (i,j,n) \in \cA \label{app:wseq}\\
		& \widehat{vv}_{k} = \widehat{vv}_{\tilde{k}} & \forall k = (i,j,n) \in \cA. \label{app:vveq}
		\end{align}
	\end{subequations}
	Constraints~\eqref{app:vSimple}-\eqref{app:ssSimple} are simple bounds for voltage magnitude, difference in phase angles, phase angles and approximation terms for \(\cos(\theta_i -\sigma_k - \theta_j)\) and \(\sin(\theta_i - \sigma_k - \theta_j)\), respectively. Using the bound-tightening techniques in \citet{coffrin2015strengthening}, we can derive the upper bounds and lower bounds of \(\cos(\theta_i - \theta_j)\) and \(\sin(\theta_i - \theta_j)\) as:
	\begin{align*}
	&\overline{cs}_{k} = \cos(\overline{\Delta}_{k}),\ \  \underline{cs}_{k} = \cos(\underline{\Delta}_{k}) & \qquad & \text{if $\overline{\Delta}_{k} \leq 0$}\\
	&\overline{cs}_{k} = \cos(\underline{\Delta}_{k}),\ \  \underline{cs}_{k} = \cos(\overline{\Delta}_{k}) & \qquad & \text{if $\underline{\Delta}_{k} \geq 0$}\\
	&\overline{cs}_{k} = 1,\ \  \underline{cs}_{k} = \min\{\cos(\underline{\Delta}_{k}), \cos(\overline{\Delta}_{k})\} & \qquad & \text{if $\overline{\Delta}_{k} \geq 0$ and $\underline{\Delta}_{k} \leq 0$},
	\end{align*}
	and
	\begin{align*}
	&\overline{ss}_{k} = \sin(\overline{\Delta}_{k})\\
	&\underline{ss}_{k} = \sin(\underline{\Delta}_{k}).
	\end{align*}
	Constraints~\eqref{app:Pk} and \eqref{app:Qk} are linearized versions of the power flow constraints in~\eqref{eqn:pf}, and constraint~\eqref{app:pfUB} replicates~\eqref{eqn:lineConstr}, while active and reactive power flow balance constraints~\eqref{app:balanceP} and \eqref{app:balanceQ} replicate~\eqref{eqn:balance}. Since we are modeling the deterministic ACOPF problem here, we do not have the recourse freedom variables \(o^{p,+}, o^{p,-}, o^{q,+},o^{q,-}\) in this formulation. Constraint~\eqref{app:vvleq} represents the correct quantitative relationship between \(\widehat{vv}_{k}\) and \(\hat{v}_i \hat{v}_j\), where \(\widehat{vv}\) can be considered as a relaxation of the bilinear term \(v_i v_j\). Constraint~\eqref{app:cs1}-\eqref{app:vsqr2} are linear and convex quadratic bounding approximations of cosine, sine and quadratic functions. For multi-linear terms, a McCormick scheme is applied to relax \(v_i v_j\), \(v_i v_j \cos(\theta_i - \theta_j)\) and \(v_i v_j \sin(\theta_i - \theta_j)\) in constraints~\eqref{app:vv1}-\eqref{app:ws4}.
	
	The model includes valid inequalities described in \citet{coffrin2016strengthening} to further tighten this convex relaxation. One set of valid constraints uses the trigonometric relationship \(\tan(\theta) = \frac{\sin(\theta)}{\cos(\theta)} \) to build valid inequalities on \(\widehat{wc}\) and \(\widehat{ws}\) as in~\eqref{app:trig1} and \eqref{app:trig2}, using the fact that \(\widehat{wc} \geq 0\). Another set of valid constraints, constraints~\eqref{app:nlc1}-\eqref{app:nlc2}, is called a lifted nonlinear cut. See \citet{coffrin2016strengthening} for a detailed derivation of the lifted nonlinear cut. Finally, we use constraints~\eqref{app:cseq}-\eqref{app:wseq} to make sure variables describing the forward flow are consistent with those describing the backward flow between the same pair of buses, where \(\tilde{k} = (j,i,n)\) represents the backward flow of the flow \(k = (i,j,n)\).
	
	\vspace{1cm}
	\section{Detailed Formulation of Model~\eqref{prob:dualSubIBudget}} \label{appen:QCDual}
	In this section we first expand the formulation of model~\eqref{prob:dualSubIBudget}, and then we explain the corresponding relationship between the dual variables and the primal constraints in formulation~\eqref{prob:dualSubIBudget} and Appendix~\ref{appen:QCRelaxation}.
	
	\begin{subequations}
		\footnotesize
		\begin{align} 
		\max \quad & -\sum_{k \in \cA} W_k \nu_{1,k} - \sum_{i \in \cN} \left[ \sum_{g \in \cG_i} \left(\hat{s}^p_g \lambda^p_i + \hat{s}^q_g \lambda^q_i\right) + (\overline{u}^p_i - u^{p,0}_i) (r^{p,+}_i + \zeta_i^+ r^{op,+})+ (\underline{u}^{p,-}_i - u^{p,0}_i) (r^{p,-}_i+ \zeta_i^- r^{op,-}) + \right.&\nonumber\\
		&(\overline{u}^q_i - u^{q,0}_i) (r^{q,+}_i + \zeta_i^+ r^{oq,+}) + (\underline{u}^{q,-}_i - u^{q,0}_i) (r^{q,-}_i + \zeta_i^+ r^{oq,-}) + u^{p,0}_i \lambda^p_i + u^{q,0}_i \lambda^q_i -& \nonumber\\
		& \left. \frac{1}{4} \mu_{4,i2} + \frac{1}{4} \nu_{4,i} + \overline{v}_i \lambda^{vu}_{i} + \underline{v}_i \lambda^{vl}_{i} + \overline{o}^p_i \lambda^{op,-}_{i} + \overline{o}^q_i \lambda^{oq,-}_{i} + \overline{v}_i \underline{v}_i \lambda^v_{i} + \lambda^{op}_i (\overline{o}^p_i + h^p_i) + \lambda^{oq}_i (\overline{o}^q_i + h^q_i) \right] - & \nonumber \\
		& \sum_{k = (i,j,n) \in \cA} \left[ -\frac{3}{4} \mu_{3,k2} + \frac{5}{4} \nu_{3,k} - \sqrt{\frac{1-\cos \theta^u_k}{(\theta^u_k)^2}} \mu_{3,k1} + \right. & \nonumber\\
		& \overline{cs}_{k} \lambda^{cs1}_{k} + \underline{cs}_{k} \lambda^{cs2}_{k} + \overline{ss}_{k} \lambda^{ss1}_{k} + \underline{ss}_{k} \lambda^{ss2}_{k} + (\overline{\Delta}_{k} + \sigma_k) \lambda^{\theta 1}_{k} + (\underline{\Delta}_{k} + \sigma_k) \lambda^{\theta 2}_{k} + & \nonumber\\
		& \left(\cos \underline{\Delta}_{k} - \frac{\cos \overline{\Delta}_{k} - \cos \underline{\Delta}_{k}}{\overline{\Delta}_{k} - \underline{\Delta}_{k}} (\underline{\Delta}_k + \sigma_k) \right) \lambda^{cs3}_{k}  + & \nonumber\\
		& \left(\sin \frac{\theta^u_{k}}{2} - \frac{\theta^u_{k}}{2} \cos \frac{\theta^u_{k}}{2}\right)(\lambda^{ss3}_{k} - \lambda^{ss4}_{k}) - & \nonumber \\
		& \frac{\underline{v}_i \underline{v}_j}{\tau_{1,k} \tau_{2,k}} \lambda^{vv1}_{k} - \frac{\overline{v}_i \overline{v}_j}{\tau_{1,k} \tau_{2,k}} \lambda^{vv2}_{k} - \frac{\underline{v}_i \overline{v}_j}{\tau_{1,k} \tau_{2,k}} \lambda^{vv3}_{k} - \frac{\overline{v}_i \underline{v}_j }{\tau_{1,k} \tau_{2,k}} \lambda^{vv4}_{k} + & \nonumber \\
		& \frac{\overline{v}_i \overline{v}_j}{\tau_{1,k} \tau_{2,k}} \cos \theta^{\delta}_{k} (\frac{\underline{v}_i \underline{v}_j}{\tau_{1,k} \tau_{2,k}} - \frac{\overline{v}_i \overline{v}_j}{\tau_{1,k} \tau_{2,k}}) \lambda^{lnc1}_{k} - \frac{\underline{v}_i \underline{v}_j}{\tau_{1,k} \tau_{2,k}} \cos \theta^{\delta}_{k} (\frac{\underline{v}_i \underline{v}_j}{\tau_{1,k} \tau_{2,k}} - \frac{\overline{v}_i \overline{v}_j}{\tau_{1,k} \tau_{2,k}}) \lambda^{lnc2}_{k} -  & \nonumber \\
		& \underline{cs}_{k} \frac{\underline{v}_i \underline{v}_j}{\tau_{1,k} \tau_{2,k}} \lambda^{wc1}_{k} - \overline{cs}_{k} \frac{\overline{v}_i \overline{v}_j}{\tau_{1,k} \tau_{2,k}} \lambda^{wc2}_{k} - \overline{cs}_{k} \frac{\underline{v}_i \underline{v}_j}{\tau_{1,k} \tau_{2,k}} \lambda^{wc3}_{k} - \underline{cs}_{k} \frac{\overline{v}_i \overline{v}_j}{\tau_{1,k} \tau_{2,k}} \lambda^{wc4}_{k} - & \nonumber \\
		& \underline{ss}_{k} \frac{\underline{v}_i \underline{v}_j}{\tau_{1,k} \tau_{2,k}} \lambda^{ws1}_{k} - \overline{ss}_{k} \frac{\overline{v}_i \overline{v}_j}{\tau_{1,k} \tau_{2,k}} \lambda^{ws2}_{k} - \overline{ss}_{k} \frac{\underline{v}_i \underline{v}_j}{\tau_{1,k} \tau_{2,k}} \lambda^{ws3}_{k} - \underline{ss}_{k} \frac{\overline{v}_i \overline{v}_j}{\tau_{1,k} \tau_{2,k}} \lambda^{ws4}_{k} \Bigg] &\\
		\text{s.t.} \quad & \lambda^{pt}_{k} - \mu_{1,k1} + \lambda^{p}_{i} = 0 \quad \qquad \qquad \qquad \qquad \qquad \qquad \qquad \qquad \qquad \qquad \qquad \quad \forall k = (i,j,n) \in \cA & \label{app:dP}\\
		& \lambda^{qt}_{k} - \mu_{1,k2} + \lambda^{q}_{i} = 0 \quad \qquad \qquad \qquad \qquad \qquad \qquad \qquad \qquad \qquad \qquad \qquad \quad \forall k = (i,j,n) \in \cA & \label{app:dQ} \\
		& \|\mu_{1,k}\| \leq \nu_{1,k} \quad \qquad \qquad \qquad \qquad \qquad \qquad \qquad \qquad \qquad \qquad \qquad \qquad \qquad \forall k \in \cA \label{app:SOCP1}\\
		& \|\mu_{2,k}\| \leq \nu_{2,k} \quad \qquad \qquad \qquad \qquad \qquad \qquad \qquad \qquad \qquad \qquad \qquad \qquad \qquad \forall k \in \cA \label{app:SOCP2}\\
		& \|\mu_{3,k}\| \leq \nu_{3,k} \quad \qquad \qquad \qquad \qquad \qquad \qquad \qquad \qquad \qquad \qquad \qquad \qquad \qquad \forall k \in \cA \label{app:SOCP3}\\
		& \|\mu_{4,i}\| \leq \nu_{4,i} \quad \qquad \qquad \qquad \qquad \qquad \qquad \qquad \qquad \qquad \qquad \qquad \qquad \qquad \; \forall i \in \cN \label{app:SOCP4}\\
		& -\sum_{k = (i,j,n) \in \cA} \left(\frac{\underline{v}_j}{\tau_{1,k} \tau_{2,k}} \lambda^{vv1}_{k} + \frac{\overline{v}_j}{\tau_{1,k} \tau_{2,k}} \lambda^{vv2}_{k} + \frac{\overline{v}_j}{\tau_{1,k} \tau_{2,k}} \lambda^{vv3}_{k} + \frac{\underline{v}_j}{\tau_{1,k} \tau_{2,k}} \lambda^{vv4}_{k} \right) - & \nonumber \\ 
		&\sum_{k = (j,i,n) \in \cA} \left(\frac{\underline{v}_j}{\tau_{1,k} \tau_{2,k}} \lambda^{vv1}_{k} + \frac{\overline{v}_j}{\tau_{1,k} \tau_{2,k}} \lambda^{vv2}_{k} + \frac{\underline{v}_j}{\tau_{1,k} \tau_{2,k}} \lambda^{vv3}_{k} + \frac{\overline{v}_j}{\tau_{1,k} \tau_{2,k}} \lambda^{vv4}_{k} \right) - & \nonumber\\
		& \mu_{4,i1} - (\overline{v}_i + \underline{v}_i) \lambda^v_{i} + \lambda^{vu}_{i} + \lambda^{vl}_{i} = 0 \qquad \qquad \qquad \qquad \qquad \qquad \qquad \qquad \qquad \; \forall i \in \cN & \label{app:dv}\\
		& \sum_{k = (i,j,n) \in \cA}\left[-g_k\frac{\lambda^{pt}_{k}}{\tau_{1,k}^2} + (b_k+\frac{b_k^c}{2}) \frac{\lambda^{qt}_{k}}{\tau_{2,k}^2}  \right]  - \mu_{4,i2} - \nu_{4,i} + \lambda^v_{i} + \lambda^p_i g_i^{sh} - \lambda^q_i b_i^{sh} - & \nonumber \\
		& \sum_{k = (i,j,n) \in \cA} \left[ \frac{\mu_{2,k2}}{\tau_{1,k}^2 \sqrt{2}} +\frac{\nu_{2,k}}{\tau_{1,k}^2 \sqrt{2}} + \cos \theta^\delta_{k} \frac{v^\delta_{j}}{\tau_{2,k}} \left(\frac{\overline{v}_j}{\tau_{2,k}} \frac{\lambda^{lnc1}_{k}}{\tau_{1,k}^2} + \frac{\underline{v}_j}{\tau_{2,k}} \frac{\lambda^{lnc2}_{k}}{\tau_{1,k}^2} \right) \right] - & \nonumber\\
		&\sum_{k = (j,i,n) \in \cA} \left[ \frac{\mu_{2,k3}}{\tau_{2,k}^2 \sqrt{2}} +\frac{\nu_{2,k}}{\tau_{2,k}^2 \sqrt{2}} + \cos \theta^\delta_{k} \frac{v^\delta_{j}}{\tau_{1,k}} \left(\frac{\overline{v}_j}{\tau_{1,k}} \frac{\lambda^{lnc1}_{k}}{\tau_{2,k}^2} + \frac{\underline{v}_j}{\tau_{1,k}} \frac{\lambda^{lnc2}_{k}}{\tau_{2,k}^2} \right) \right] = 0 \qquad \forall i \in \cN&  \label{app:dvhat}\\
		& \lambda^{vv1}_{k} + \lambda^{vv2}_{k} + \lambda^{vv3}_{k} + \lambda^{vv4}_{k} + \lambda^{vve}_{k} - \lambda^{vve}_{\tilde{k}} - \mu_{2,k1} - \underline{cs}_{k} \lambda^{wc1}_{k} - \overline{cs}_{k} \lambda^{wc2}_{k} -  & \nonumber \\
		& \overline{cs}_{k} \lambda^{wc3}_{k} - \underline{cs}_{k} \lambda^{wc4}_{k} - \underline{ss}_{k} \lambda^{ws1}_{k} - \overline{ss}_{k} \lambda^{ws2}_{k} - \overline{ss}_{k} \lambda^{ws3}_{k} - \underline{ss}_{k} \lambda^{ws4}_{k} = 0 \qquad \qquad \; \; \; \forall k \in \cA & \label{app:dvvhat}\\
		& g_k \lambda^{pt}_{k} - b_k \lambda^{qt}_{k} - \tan \overline{\Delta}_{k} \lambda^{\tan 1}_{k} - \tan \underline{\Delta}_{k} \lambda^{\tan 2}_{k} + \lambda^{wc1}_{k} + \lambda^{wc2}_{k} + \lambda^{wc3}_{k} + \lambda^{wc4}_{k} + &\nonumber \\
		& \lambda^{wce}_{k} - \lambda^{wce}_{\tilde{k}} + \frac{v^{\delta}_i v^{\delta}_j}{\tau_{1,k} \tau_{2,k}} \cos \theta^{\phi}_{k} (\lambda^{lnc1}_{k} + \lambda^{lnc2}_{k}) = 0  \qquad \qquad \qquad \qquad \qquad \qquad \; \forall k \in \cA & \label{app:dwc}\\
		& b_k \lambda^{pt}_{k} + g_k \lambda^{qt}_{k} + \lambda^{ws1}_{k} + \lambda^{ws2}_{k} + \lambda^{ws3}_{k} + \lambda^{ws4}_{k} + \lambda^{wse}_{k} + \lambda^{wse}_{\tilde{k}} + & \nonumber \\
		& \lambda^{\tan 1}_{k} + \lambda^{\tan 2}_{k} + \frac{v^{\delta}_i v^{\delta}_j}{\tau_{1,k} \tau_{2,k}} \sin \theta^{\phi}_{k} (\lambda^{lnc1}_{k} + \lambda^{lnc2}_{k}) = 0 \qquad \qquad \qquad \qquad \qquad \quad \;\; \forall k \in \cA & \label{app:dws}\\
		& -\frac{\underline{v}_i \underline{v}_j}{\tau_{1,k} \tau_{2,k}} \lambda^{wc1}_{k} - \frac{\overline{v}_i \overline{v}_j}{\tau_{1,k} \tau_{2,k}} \lambda^{wc2}_{k} - \frac{\underline{v}_i \underline{v}_j}{\tau_{1,k} \tau_{2,k}} \lambda^{wc3}_{k} - \frac{\overline{v}_i \overline{v}_j}{\tau_{1,k} \tau_{2,k}} \lambda^{wc4}_{k}  + & \nonumber\\
		&\lambda^{cs1}_{k} + \lambda^{cs2}_{k} + \lambda^{cs3}_{k} - \mu_{3,k2} + \nu_{3,k} +\lambda^{cse}_{k} - \lambda^{cse}_{\tilde{k}} = 0 \qquad \qquad \qquad \qquad \qquad \quad \; \forall k = (i,j,n) \in \cA & \label{app:dcs} \\
		& -\frac{\underline{v}_i \underline{v}_j}{\tau_{1,k} \tau_{2,k}} \lambda^{ws1}_{k} - \frac{\overline{v}_i \overline{v}_j}{\tau_{1,k} \tau_{2,k}} \lambda^{ws2}_{k} - \frac{\underline{v}_i \underline{v}_j}{\tau_{1,k} \tau_{2,k}} \lambda^{ws3}_{k} - \frac{\overline{v}_i \overline{v}_j}{\tau_{1,k} \tau_{2,k}} \lambda^{ws4}_{k} + & \nonumber \\
		& \lambda^{ss1}_{k} + \lambda^{ss2}_{k} + \lambda^{ss3}_{k} + \lambda^{ss4}_{k} +\lambda^{sse}_{k} + \lambda^{sse}_{\tilde{k}} = 0 \qquad \qquad \qquad \qquad \qquad \qquad \qquad \; \forall k = (i,j,n) \in \cA & \label{app:dss} \\
		& \sum_{k = (i,j,n) \in \cA} \left(\lambda^{\theta 1}_{k} + \lambda^{\theta 2}_{k} -   \frac{\cos \overline{\Delta}_{k} - \cos \underline{\Delta}_{k}}{\overline{\Delta}_{k} - \underline{\Delta}_{k}} \lambda^{cs3}_{k} - \cos \frac{\theta^u_{k}}{2} (\lambda^{ss3}_{k} + \lambda^{ss4}_{k}) - \right. & \nonumber \\
		& \left. \sqrt{\frac{1 - \cos \theta^u_{k}}{\theta^u_{k}}} \mu_{3,k1} \right)  + \sum_{k = (j,i,n) \in \cA} \left(-\lambda^{\theta 1}_{k} - \lambda^{\theta 2}_{k} + \frac{\cos \overline{\Delta}_{k} - \cos \underline{\Delta}_{k}}{\overline{\Delta}_{k} - \underline{\Delta}_{k}} \lambda^{cs3}_{k} + \right.& \nonumber \\
		& \left. \cos \frac{\theta^u_{k}}{2} (\lambda^{ss3}_{k} + \lambda^{ss4}_{k}) + \sqrt{\frac{1 - \cos \theta^u_{k}}{\theta^u_{k}}} \mu_{3,k1} \right) = 0 \qquad \qquad \qquad \qquad \qquad \qquad \; \forall i \in \cN & \label{app:dtheta}\\
		& \lambda^{op,-}_{i} \geq \lambda^p_i   \quad \qquad \qquad \qquad \qquad \qquad \qquad \qquad \qquad \qquad \qquad \qquad \qquad \qquad \forall i \in \cN & \label{app:dop1}\\
		& \lambda^{op}_{i} \geq -\lambda^p_i   \quad \qquad \qquad \qquad \qquad \qquad \qquad \qquad \qquad \qquad \qquad \qquad \qquad \qquad \forall i \in \cN & \label{app:dop2} \\
		& \lambda^{oq,-}_{i} \geq \lambda^q_i   \quad \qquad \qquad \qquad \qquad \qquad \qquad \qquad \qquad \qquad \qquad \qquad \qquad \qquad \forall i \in \cN & \label{app:doq1}\\
		& \lambda^{oq}_{i} \geq -\lambda^q_i   \quad \qquad \qquad \qquad \qquad \qquad \qquad \qquad \qquad \qquad \qquad \qquad \qquad \qquad \forall i \in \cN & \label{app:doq2}\\
		& -y^+_m \leq r^{p,+}_i \leq y^+_m  \quad \qquad \qquad \qquad \qquad \qquad \qquad \qquad \qquad \qquad \qquad \qquad \; \; \; \forall m \in \cM, i \in \cN_m & \label{app:drpp1}\\
		& -y^-_m \leq r^{p,-}_i \leq y^-_m \quad \qquad \qquad \qquad \qquad \qquad \qquad \qquad \qquad \qquad \qquad \qquad \; \; \;  \forall m \in \cM, i \in \cN_m & \label{app:drpm1}\\
		& -y^+_m \leq r^{q,+}_i \leq y^+_m \quad \qquad \qquad \qquad \qquad \qquad \qquad \qquad \qquad \qquad \qquad \qquad \; \; \;  \forall m \in \cM, i \in \cN_m & \label{app:drqp1}\\
		& -y^-_m \leq r^{q,-}_i \leq y^-_m \quad \qquad \qquad \qquad \qquad \qquad \qquad \qquad \qquad \qquad \qquad \qquad \; \; \;  \forall m \in \cM, i \in \cN_m & \label{app:drqm1}\\
		& \lambda^{p}_i - 1 + y^+_m \leq r^{p,+}_i \leq \lambda^{p}_i + 1 - y^+_m \quad \qquad \qquad \qquad \qquad \qquad \qquad \qquad \qquad \; \forall m \in \cM, i \in \cN_m & \label{app:drpp2}\\
		& \lambda^{p}_i - 1 + y^-_m \leq r^{p,-}_i \leq \lambda^{p}_i + 1 - y^-_m \quad \qquad \qquad \qquad \qquad \qquad \qquad \qquad \qquad \; \forall m \in \cM, i \in \cN_m & \label{app:drpm2}\\
		& \lambda^{q}_i - 1 + y^+_m \leq r^{q,+}_i \leq \lambda^{q}_i + 1 - y^+_m \quad \qquad \qquad \qquad \qquad \qquad \qquad \qquad \qquad \; \forall m \in \cM, i \in \cN_m & \label{app:drqp2}\\
		& \lambda^{q}_i - 1 + y^-_m \leq r^{q,-}_i \leq \lambda^{q}_i + 1 - y^-_m \quad \qquad \qquad \qquad \qquad \qquad \qquad \qquad \qquad \; \forall m \in \cM, i \in \cN_m & \label{app:drqm2}\\
		& -y^+_m \leq r^{op,+}_i \leq y^+_m \quad \qquad \qquad \qquad \qquad \qquad \qquad \qquad \qquad \qquad \qquad \qquad \; \forall m \in \cM, i \in \cN_m & \label{app:dropp1}\\
		& -y^-_m \leq r^{op,-}_i \leq y^-_m \quad \qquad \qquad \qquad \qquad \qquad \qquad \qquad \qquad \qquad \qquad \qquad \; \forall m \in \cM, i \in \cN_m & \label{app:dropm1}\\
		& -y^+_m \leq r^{oq,+}_i \leq y^+_m \quad \qquad \qquad \qquad \qquad \qquad \qquad \qquad \qquad \qquad \qquad \qquad \; \forall m \in \cM, i \in \cN_m & \label{app:droqp1}\\
		& -y^-_m \leq r^{oq,-}_i \leq y^-_m \quad \qquad \qquad \qquad \qquad \qquad \qquad \qquad \qquad \qquad \qquad \qquad \; \forall m \in \cM, i \in \cN_m & \label{app:droqm1}\\
		& \lambda^{op}_i - 1 + y^+_m \leq r^{op,+}_i \leq \lambda^{op}_i + 1 - y^+_m \quad \qquad \qquad \qquad \qquad \qquad \qquad \qquad \quad \forall m \in \cM, i \in \cN_m & \label{app:dropp2}\\
		& \lambda^{op}_i - 1 + y^-_m \leq r^{op,-}_i \leq \lambda^{op}_i + 1 - y^-_m \quad \qquad \qquad \qquad \qquad \qquad \qquad \qquad \quad \forall m \in \cM, i \in \cN_m & \label{app:dropm2}\\
		& \lambda^{oq}_i - 1 + y^+_m \leq r^{oq,+}_i \leq \lambda^{oq}_i + 1 - y^+_m \quad \qquad \qquad \qquad \qquad \qquad \qquad \qquad \quad \forall m \in \cM, i \in \cN_m & \label{app:droqp2}\\
		& \lambda^{oq}_i - 1 + y^-_m \leq r^{oq,-}_i \leq \lambda^{oq}_i + 1 - y^-_m \quad \qquad \qquad \qquad \qquad \qquad \qquad \qquad \quad \forall m \in \cM, i \in \cN_m & \label{app:droqm2}\\
		& y_m^+ + y_m^- \leq 1 \qquad \qquad \qquad \qquad \qquad \qquad \qquad \qquad \qquad \qquad \qquad \qquad \qquad \forall m \in \cM &\label{app:budget1p}\\
		& \sum_{m \in \cM} \left(y_m^+ + y_m^- \right) \leq \Gamma \label{app:budget}&\\
		& -1 \leq \lambda^{p} \leq 1& \\
		& -1 \leq \lambda^{q} \leq 1& \\
		& \lambda^{cs1}, \lambda^{ss1}, \lambda^{ss3}, \lambda^{v}, \lambda^{vu}, \lambda^{\theta 1}, \lambda^{vv3}, \lambda^{vv4}, \lambda^{wc3}, \lambda^{wc4}, \lambda^{ws3}, \lambda^{ws4} \geq 0& \\ 
		& \lambda^{op}, \lambda^{oq}, \lambda^{op,-}, \lambda^{oq,-}, \lambda^{\tan 1}, \nu_{1},\nu_{2}, \nu_{3},\nu_{4} \geq 0 & \\
		& \lambda^{vl}, \lambda^{cs2}, \lambda^{cs3}, \lambda^{ss2}, \lambda^{ss4}, \lambda^{\theta 2}, \lambda^{vv1}, \lambda^{vv2}, \lambda^{wc1}, \lambda^{wc2}, \lambda^{ws1}, \lambda^{ws2}, \lambda^{\tan 2}, \lambda^{lnc1}, \lambda^{lnc2} \leq 0 & \\
		& y^{+},y^{-} \in \{0,1\}^{|\cM|}.&
		\end{align}
	\end{subequations}
	
	The formulation above is an exact form of problem~\eqref{prob:dualSubIBudget}. In this formulation, \(\lambda^{pt}\) and \(\lambda^{qt}\) are the dual variables of line transmission constraints~\eqref{app:Pk} and~\eqref{app:Qk}, while \(\lambda^{p}\) and \(\lambda^{q}\) are the dual variables of the equivalent constraints of flow balance constraints~\eqref{app:balanceP} and~\eqref{app:balanceQ} in problem~\eqref{prob:dualSubIBudget}. For approximation of function \(\cos\) and \(\sin\), we use \(\lambda^{cs1}, \lambda^{cs2}, \lambda^{ss1}\) and \(\lambda^{ss2}\) to represent the dual variables of the upper bound and the lower bound of \(\widehat{cs}\) and \(\widehat{ss}\) respectively, and \(\lambda^{cs3}, \lambda^{ss3}\) and \(\lambda^{ss4}\) for constraints~\eqref{app:cs2}-\eqref{app:ss2}. Dual variables \(\lambda^{v}\) correspond to constraint~\eqref{app:vsqr2}. We denote the dual variables for the recourse freedom bounds as \(\lambda^{op,-}\), \(\lambda^{oq,-}\), \(\lambda^{op}\) and \(\lambda^{oq}\).
	
	We use \(\lambda^{vv1}\)-\(\lambda^{vv4}\), \(\lambda^{wc1}\)-\(\lambda^{wc4}\), \(\lambda^{ws1}\)-\(\lambda^{ws4}\) as the dual variables of McCormick relaxation constraints~\eqref{app:vv1}-\eqref{app:ws4}. The dual variables \(\lambda^{\tan 1}\) and \(\lambda^{\tan 2}\) correspond to the tangent tightening constraints~\eqref{app:trig1} and~\eqref{app:trig2}, and \(\lambda^{lnc1}\) and \(\lambda^{lnc2}\) correspond to the lifted nonlinear cuts~\eqref{app:nlc1} and~\eqref{app:nlc2}. For variable equality enforcement constraints~\eqref{app:cseq}-\eqref{app:vveq}, we use \(\lambda^{cse}, \lambda^{sse}, \lambda^{wce}, \lambda^{wse}\) and \(\lambda^{vve}\) as their dual variables. The remaining SOCP constraints~\eqref{app:pfUB}, \eqref{app:vvleq}, \eqref{app:cs1} are \eqref{app:vsqr1} have their dual variables as \((\mu_{1},\nu_{1})\), \((\mu_{2},\nu_{2})\), \((\mu_{3},\nu_{3})\) and \((\mu_{4},\nu_{4})\), respectively. Notice that those SOCP constraints can be rewritten in a standard form for duality derivation:
	\begin{align*}
	&\|(P_k, Q_k) \|_2 \leq W_k & \forall k \in \cA\\
	& \left\| \left(\widehat{vv}_k, \frac{\hat{v}_i}{\tau_{1,k}^2 \sqrt{2}}, \frac{\hat{v}_j}{\tau_{2,k}^2 \sqrt{2}} \right) \right\|_2 \leq \frac{\hat{v}_i/\tau_{1,k}^2 + \hat{v}_j/\tau_{2,k}^2}{\sqrt{2}}& \forall k = (i,j,n) \in \cA\\
	& \left\| \left(\sqrt{\frac{1-\cos \theta^u_{k}}{{\theta^u_{k}}^2}} (\theta_i - \sigma_k - \theta_j), \widehat{cs}_{k} - \frac{3}{4} \right) \right\|_2 \leq \frac{5}{4} - \widehat{cs}_{k} & \forall k = (i,j,n) \in \cA\\
	& \left\| \left(v_i, \hat{v}_i - \frac{1}{4} \right) \right\|_2 \leq \hat{v}_i + \frac{1}{4}& \forall i \in \cN,
	\end{align*}
	which means that \(\mu_{1,k}\), \(\mu_{2,k}\), \(\mu_{3,k}\) and \(\mu_{4,i}\) are vectors with cardinality \(2,3,2\) and \(2\), respectively, while \(\nu_{1,k}\), \(\nu_{2,k}\), \(\nu_{3,k}\) and \(\nu_{4,i}\) are scalars. The dual SOCP constraints are formed as~\eqref{app:SOCP1}-\eqref{app:SOCP4}.
	
	With the dual variables established, we build constraint~\eqref{app:dP} for the primal variable \(P\), \eqref{app:dQ} for \(Q\), \eqref{app:dv} for \(v\), \eqref{app:dvhat} for \(\hat{v}\), \eqref{app:dvvhat} for \(\widehat{vv}\), \eqref{app:dwc} for \(\widehat{wc}\), \eqref{app:dws} for \(\widehat{ws}\), \eqref{app:dcs} for \(\widehat{cs}\), \eqref{app:dss} for \(\widehat{ss}\), and \eqref{app:dtheta} for \(\theta\). Constraints~\eqref{app:dop1}-\eqref{app:doq2} characterize the dual constraints for primal variables \(o^{p,+}, o^{p,-}, o^{q,+}\) and \(o^{q,-}\). Constraints~\eqref{app:drpp1}-\eqref{app:droqm2} are the direct replicate of the linearization constraints of bilinear terms in problem~\eqref{prob:dualSubIBudget}. Constraints~\eqref{app:budget1p} guarantees that for each bus the uncontrollable injection can be either at the nominal value or at one of the bounds. Constraint~\eqref{app:budget} is the budget constraint.
	
	\vspace{1cm}
	\section{Bound Tightening Process}	\label{appen:boundTightening}
	\citet{coffrin2015strengthening,coffrin2016strengthening} introduce a bound-tightening process for the QC relaxation involving the bounds that appear in constraints~\eqref{eqn:thetaDiff} and~\eqref{eqn:volCons}. A new variable, \(\theta^d_{k}\), is created to represent the phase angle difference between two buses of the line \(k = (i,j,n) \in \cA\), and appended to \(x\). The constraints defining \(\theta^d_{k}\), \(\theta^d_{k} = \theta_{i} - \theta_{j}\), are included in the general form linear constraints \(Ax \leq b\). The process iteratively updates the bounds \(v_i=\underline{v}_i \ \mbox{or} \ \overline{v}_i\) at bus \(i \in \cN\) and the phase angle difference \(\theta^d_k=\underline{\Delta}_{k} \ \mbox{or} \ \overline{\Delta}_{k}\) of line \(k = (i,j,n) \in \cA\), by a set of QC relaxation problems with the objective function substituted by \(v_i,\ \forall i \in\cN\) or \(\theta^d_{k},\ \forall k = (i,j,n) \in \cA\):
	\begin{subequations}
		\label{prob:boundT}
		\begin{align}
		\min_{s,x,u} \text{ or } \max_{s,x,u} \quad & x_{loc} & \\
		\text{s.t.} \quad 	&  \underline{s}\leq s \leq \overline{s} & \\
		& Ax \leq b &  \\
		& \|B_i x + a_i \|_2 \leq e_i^\top x + f_i & \forall i = 1, \dots, m_c \\
		& A^p x = D s^p + u^{p} &\\
		& A^q x = D s^q + u^{q} &\\
		& A^{op} x \leq \overline{o}^p + (1 + \alpha^{h,+})h^p &\\
		& A^{oq} x \leq \overline{o}^q + (1 + \alpha^{h,+})h^q &\\
		& \underline{u}^p \leq u^p \leq \overline{u}^p &\\
		& \underline{u}^q \leq u^q \leq \overline{u}^q. &
		\end{align}
	\end{subequations}
	
	In this formulation we treat uncertain uncontrollable injections as decision variables so that the resulting upper and lower bounds are valid for all \((u^p,u^q)\) where \(\underline{u}^p \leq u^p \leq \overline{u}^p\) and \(\underline{u}^q \leq u^q \leq \overline{u}^q\), hence for all \(u \in \cU\). In the objective function the subscript \(loc\) references the position of \(v_i\) or \(\theta^d_{k}\) in the decision vector \(x\). The bounds are iteratively updated using the optimal solutions from problem~\eqref{prob:boundT} for each \(v_i\) and \(\theta^d_{k}\). The process terminates when changes in the bounds are negligible.
	
	We perform bound tightening as a preprocessing step prior to running optimization. We focus on the bounds on \(v_i\) and \(\theta^d_{k}\) because the tightness of our linear-quadratic relaxation for the sine and cosine functions and that of the McCormick relaxation for the multi-linear terms depends on the bounds of \(v_i\) and \(\theta^d_{k}\). As illustrated in \citet{coffrin2016strengthening}, tightening these bounds tightens the QC relaxation and allows for a tighter lower bound on the nonconvex ACOPF problem. We employ bound tightening in all results reported in Section~\ref{sec:exp}. We {show the runtimes of the bound-tightening process for each test case but }do not give detailed improvements from this process beyond indicating here that the optimal values of instances of model~\eqref{prob:rcACOPF} grow by 1-10\% by tightening these simple bounds.
	
	 \begin{table}[h]
	 	\centering
	 	\begin{tabular}{ l | r }
	 		\hline
	 		Test Case & Time (sec.) \\
	 		\hline
	 		Case 5 & 1.8\\
	 		Case 9 & 3.6\\
	 		Case 14 & 8.0\\
	 		Case 30 & 29.2\\
	 		Case 118 & 809.3\\
	 		Case 300 & 5759.0\\
	 		Case 2383 & 61001.3\\
	 		Case 2746 & 175095.7\\
	 		\hline
	 	\end{tabular}
	 	\caption{Computational effort for the bound-tightening process.}
	 	\label{table:btTable}
	 \end{table}
	
	\vspace{1cm}
	\section{Regularized Cutting-plane Algorithm}
	\label{appen:regu}
	In this section we describe the regularized cutting-plane algorithm mentioned in Section~\ref{subsec:scenAppend}. Given a current incumbent solution, \(\hat{s}\), we modify the master problem from model~\eqref{prob:master} by adding a quadratic regularization term, as indicated in model~\eqref{prob:masterR}. In general, the regularization term prevents large changes in incumbent solutions between iterations, which can stabilize the algorithm and encourage faster converge. 
	\begin{subequations}
		\label{prob:masterR}
		\begin{align}
		(M^R) \quad \min \quad & c(s^p,s^q) + \frac{\rho}{2} \|(s^p,s^q) - (\hat{s}^p,\hat{s}^q) \|_2^2 &\\
		\text{s.t.} \quad & \underline{s} \leq s \leq \overline{s} & \\
		& -{\lambda^{p,k}}^\top D s^p_i - {\lambda^{q,k}}^\top D s^q + z^{k} \leq 0 & \forall k = 1, 2, \dots \label{eqn:cutR}
		\end{align}
	\end{subequations}
	
	However, additional steps need to be taken to obtain a valid lower bound. When an \(\epsilon\)-feasible solution is reached, since the regularization term is appended to the master problem as shown in~\eqref{prob:masterR}, \(c(\hat{s}^p,\hat{s}^q)\) may not be a lower bound on the optimal value of model~\eqref{prob:rcACOPF}. However, we can solve the original master problem~\eqref{prob:master} with all the feasibility cuts (but without the regularization term) to obtain a valid lower bound, \(V^*\). Although a valid lower bound is obtained, the solution \((\tilde{s}^p,\tilde{s}^q)\) of this non-regularized master problem may not be equal to \((\hat{s}^p,\hat{s}^q)\), and it may not be an \(\epsilon\)-feasible solution. The algorithm needs to proceed until we obtain a \(\epsilon\)-feasible solution from solving the regularized master problem and the difference between \(V^*\) and \(c(\hat{s}^p,\hat{s}^q)\) is negligible (less than some tolerance \(\eta\)) so that we can approximate the lower bound value with the cost of this \(\epsilon\)-feasible solution. The modified algorithm is presented as Algorithm~\ref{alg:CutReg}.
	\begin{algorithm}[H]
		\small
		\caption{Regularized cutting-plane algorithm for model~\eqref{prob:rcACOPF}}
		\label{alg:CutReg}
		\begin{algorithmic}[1]
			\State  Let \((M^R)\) denote regularized master~\eqref{prob:masterR} and \((M)\) denote non-regularized master~\eqref{prob:master}; initialize iteration number \(k := 1\), tolerances \(\epsilon, \eta > 0\), and regularization weight, $\rho > 0$;
			\State Solve \((M^R)\) and obtain solution \((\hat{s}^{p,k}, \hat{s}^{q,k})\) and optimal value \(V^*\); 
			\State Solve \((SDI)\) with \((\hat{s}^p,\hat{s}^q) = (\hat{s}^{p,k}, \hat{s}^{q,k})\) and obtain solution \((\lambda^{p,k}, \lambda^{q,k})\) and optimal value \(z_{feas}^k\);
			\While{\(z_{feas}^k > \epsilon\) or \(\frac{UB - V^*}{V^*} > \eta \)} 
			\State Append \(z_{feas}^k -{\lambda^{p,k}}^\top D (s^p - \hat{s}^{p,k}) - {\lambda^{q,k}}^\top D (s^q - \hat{s}^{q,k}) \leq 0\) to constraints~\eqref{eqn:cutR} of \((M^R)\),~\eqref{eqn:cut} of \((M)\);
			\State Let \(k := k+1\);
			\State Solve \((M^{R})\) and obtain solution \((\hat{s}^{p,k}, \hat{s}^{q,k})\);
			\If{\((M^R)\) is feasible}
			\State Solve \((SDI)\) with \((\hat{s}^p,\hat{s}^q) = (\hat{s}^{p,k}, \hat{s}^{q,k})\) and obtain solution \((\lambda^{p,k}, \lambda^{q,k})\) and optimal value \(z_{feas}^k\);
			\If{\(z_{feas}^k \leq \epsilon\)}
			\State Obtain optimal value \(UB = c(\hat{s}^{p,k}, \hat{s}^{q,k})\);
			\State Solve \((M)\) and obtain solution \((\tilde{s}^{p}, \tilde{s}^{q})\) and optimal value \(V^*\);
			\State Solve \((SDI)\) with \((\hat{s}^p,\hat{s}^q) = (\tilde{s}^{p}, \tilde{s}^{q})\) and obtain solution \((\lambda^{p,k}, \lambda^{q,k})\) and optimal value \(z_{feas}^k\);
			\EndIf
			\Else
			\State Stop and return the status of infeasibility;
			\EndIf
			\vspace{0.2cm}
			\EndWhile{\textbf{end while}}
			\State Output \(V^*\) as lower bound on optimal value of model~\eqref{prob:rcACOPF}, and output \((\hat{s}^{p,k}, \hat{s}^{q,k})\) as an \(\epsilon\)-feasible solution.
		\end{algorithmic}
	\end{algorithm}
	
	Table~\ref{table:test5rTable} compares the computational performance of Algorithms~\ref{alg:Cut} and~\ref{alg:CutReg} on Cases~118 and~300 with \(\rho = 0.1, 1, 10\) and \(\eta = 10^{-4}\). It takes more than 300 iterations for Algorithm~\ref{alg:Cut} to reach an \(\epsilon\)-feasible solution, with both the violation and the lower bound improving slowly. We can see that adding a regularization term may decrease the number of iterations to convergence, but the average time for each iteration increases as \(\rho\) increases.
	\begin{table}[H]
		\centering
		\begin{tabular}{ l | l l | l l | l l}
				\hline
				\multirow{2}{*}{Parameters} & \multicolumn{2}{ c |}{No. of iterations} &  \multicolumn{2}{ c |}{\(\epsilon\)-feasibility achieved} & \multicolumn{2}{ c}{Time (sec.)}\\
				& Case 118 & Case 300 & Case 118 & Case 300 & Case 118 & Case 300 \\
				\hline
				\(\rho = 0\)& 300 & 300 & No & No & 2414 & 23042\\
				\(\rho = 0.1\) & 300 & 300 & No & No & 2640 & 30480\\
				\(\rho = 1\) & 178 & 300 & Yes & No & 2114 & 33950\\
				\(\rho = 10\) & 300 & 226 & No & Yes & 4292 & 27757\\
				\hline
		\end{tabular}
		\caption{Computational results for solving instances of model~\eqref{prob:rcACOPF} for Cases~118 and~300 with Algorithms~\ref{alg:Cut} and~\ref{alg:CutReg} with \(\Gamma = 3\) and with a limit of 300 iterations.} 
		\label{table:test5rTable}
	\end{table}
	To understand this effect, we plot the violation (in base-10 \(\log\) scale) and lower bound as a function of the iteration for Case~118 in Figure~\ref{fig:rhoPlot}. The red dots represent the value of \(UB\) corresponding to the \(\epsilon\)-feasible solutions from running Algorithm~\ref{alg:CutReg}. Without regularization, the master solution in the next iteration tends to move far from the incumbent solution. The corresponding cuts provide a global characterization of the feasible region, but it takes a long time to generate enough cuts to obtain an \(\epsilon\)-feasible solution. On the other hand, the regularized algorithm tends to generate cuts within a local area, as the new probing solution is close to the incumbent and moves quickly towards the feasible region. It takes longer to solve \((SDI)\) at a solution closer to the feasible region, which leads to a longer average time per iteration for Algorithm~\ref{alg:CutReg}.
	\begin{figure}
		\centering
		\includegraphics[width=\textwidth]{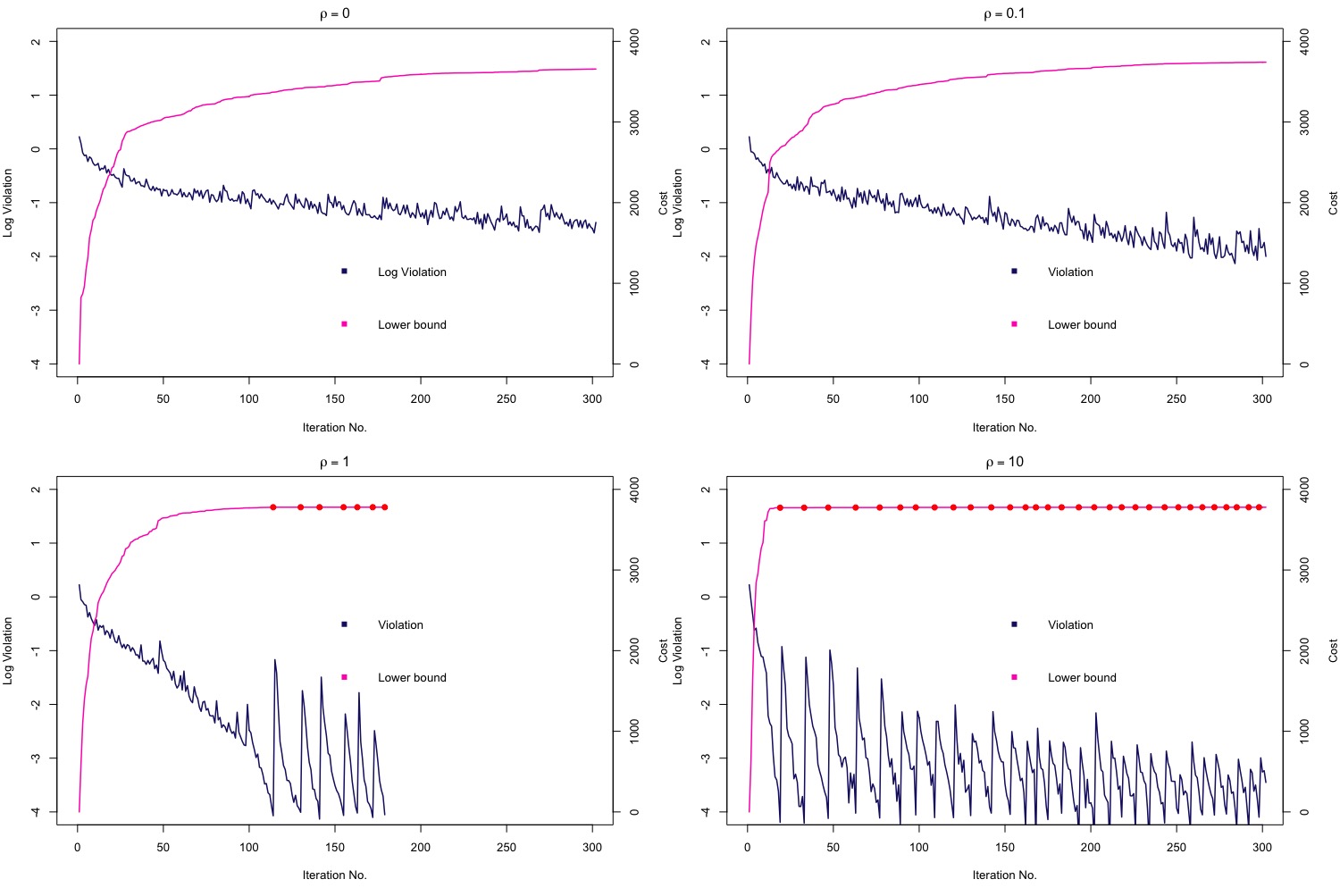}
		\caption{Computational performance of Algorithms~\ref{alg:Cut} and~\ref{alg:CutReg} for Case 118 with \(\rho = 0, 0.1,\,1\) and \(10\).} \label{fig:rhoPlot}
	\end{figure}
	
	Every time an \(\epsilon\)-feasible solution is obtained, the non-regularized master problem \((M)\) is solved to generate a lower bound. If there is still a large enough gap between the cost of that solution and the lower bound, the algorithm moves to the incumbent solution of \((M)\), which may lead to a large feasibility violation. This explains the large spikes in the plots of \(\rho = 1\) and \(\rho = 10\) in Figure~\ref{fig:rhoPlot}. The process between two spikes can be considered as exploitation of a local area. When \(\rho = 10\), there are many spikes which indicates that the algorithm reaches an \(\epsilon\)-feasible solutions frequently, but in this case the cuts generated only characterize the feasible region locally, which eventually requires many rounds of exploitation before convergence. Even with an appropriately chosen \(\rho\), the computational performance of Algorithm~\ref{alg:CutReg} is inferior to the scenario-appending technique presented in Section~\ref{subsec:scenAppend}.
	
\end{appendix}

\end{document}